\numberwithin{equation}{section}
\titleformat{\section}[block]{\bfseries\Large}{\thesection}{1em}{}
\newtheoremstyle{mystyle}
{}
{}
{\upshape}
{}
{\bfseries}
{.}
{.5em}
{}
\theoremstyle{mystyle}
\newtheorem{theorem}{Theorem}[section]
\newtheorem{rmk}[theorem]{Remark}
\newtheorem{lemma}[theorem]{Lemma}
\newtheorem{prop}[theorem]{Proposition}
\theoremstyle{plain}
\renewenvironment{proof}{\noindent\textbf{Proof:}}{\qed}
\begin{document} 
	
	\title{\Large Existence and Convergence of Least-Energy Solutions Involving the Logarithmic Schr\"odinger Operator}
	\author{Huyuan Chen,\ \ Rui Chen\ \ and\ \  Bobo Hua}
	\date{}
	\maketitle 
	\thispagestyle{empty}
	\pagenumbering{arabic} 
	
	$$\makebox{\Large{\textbf{Abstract}}}$$

In this paper, we study critical semilinear nonlocal elliptic equations involving the logarithmic Schr\"odinger operator and its fractional pseudo-relativistic counterpart, both arising in quantum models with nonlocal and relativistic effects. We first establish the existence, uniqueness, and regularity of weak solutions to equations involving the logarithmic operator $(I - \Delta)^{\ln}$ with subcritical logarithmic nonlinearities. We then investigate a Brezis–Nirenberg-type problem involving the fractional pseudo-relativistic Schr\"odinger operator $(I - \Delta)^s$, and prove the existence of least-energy solutions under both subcritical and critical nonlinearities. In particular, we show that these least-energy solutions converge, up to a subsequence, to a nontrivial least-energy solution of the limiting problem as $s \to 0^+$. Our approach relies on variational methods, including the geometry of Nehari manifold, uniform positive lower bounds, mountain pass structure, Palais–Smale condition, and delicate asymptotic analysis.\medskip

\noindent	\textbf{Keywords:} Logarithmic Schr\"odinger operator, Fractional pseudo-relativistic Schr\"odinger operator, Brezis-Nirenberg problem, Variational method

	
	\section{Introduction and Main Results }
	In this paper, we aim to study the existence, uniqueness, and regularity of solutions to the critical semilinear equation involving the logarithmic Schr\"odinger operator with subcritical logarithmic nonlinearities, i.e.
	\begin{equation}\label{case1}
		\begin{cases}
			\left(I-\Delta\right)^{\ln} u = \lambda u + k u \ln |u| \ \, & \text{in }\, \Omega, \\[1mm]
			\qquad\qquad\ u = 0 & \text{in }\,  \mathbb{R}^N \setminus \Omega,
		\end{cases}
	\end{equation}
	where $k<\frac{4}{N},  \lambda \in \mathbb{R} $ and \( \Omega \) is an open bounded subset of \( \mathbb{R}^N \) with  Lipschitz boundary. Here, \( (I - \Delta)^{\ln} \) refers to the logarithmic Schr\"odinger operator, which has the Fourier symbol \( \ln(1 + |\xi|^2) \). 
	
	In recent years, there has been considerable interest in boundary value problems involving both linear and nonlinear nonlocal integro-differential operators, particularly the fractional Laplacian with the Fourier symbol \( |\xi|^{2s} \), as seen in \cite{servadei2013variational, guo2021fractional, servadei2015brezis, servadei2013brezis, de2023critical}. The logarithmic Laplacian with the symbol \( \ln |\xi|^2 \)  emerged in the first-order term of the Taylor expansion of the fractional Laplacian (see (\ref{tlzk1})), naturally sparking interest in its study. In 2019, the explicit integral expression for the logarithmic Laplacian was computed, and a maximum principle was established in both weak and strong forms \cite{chen2019dirichlet}. The existence of solutions to boundary value problems corresponding to the logarithmic Laplacian was first explored in \cite{hernandez2022small}. In \cite{hernandez2024optimal}, they investigated the optimal boundary regularity of solutions to Dirichlet problems involving the logarithmic Laplacian and presented a Hopf-type lemma. In \cite{chen2024positive}, a classification of positive solutions for the critical semilinear problem involving the logarithmic Laplacian was given, showing that the equation
	\[(- \Delta)^{\ln} u = k u \ln u \quad \text{in} \ \, \mathbb{R}^N	\]
	has no positive solutions for \( k \in (0, \infty) \setminus \big\{ \frac{4}{N} \big\} \).
	
	Another important class of operators in the theory of nonlocal differential equations is the fractional power of the pseudo-relativistic operator $\left(m^2 - \Delta\right)^{\frac{1}{2}}$, which plays a significant role in quantum mechanics, particularly in the description of the Schr\"odinger Hamiltonian. In this paper, we study a generalized version of the operator \( (I - \Delta)^s \), which we refer to as the fractional pseudo-relativistic Schr\"odinger operator. We recall that $\left(-\Delta + m^2\right)^s - m^{2s}$	is known as the 2\( s \)-stable relativistic process, and \( (I - \Delta)^s \) serves as a relativistic correction that captures long-range spatial interactions, reflecting the impact of nonlocality on the dynamics \cite{zhang2014fractional, umarov2015introduction}.
	
	At first glance, one might suppose that \( (- \Delta)^s \) and \( \left(I-\Delta\right)^s \) can be treated similarly. However, there are significant differences: \( (I - \Delta)^s \) induces a norm in \( H^s(\mathbb{R}^N) \), whereas \( (- \Delta)^s \) does not. In particular, \( (- \Delta)^s \) is 2\( s \)-homogeneous under dilations, meaning that $(- \Delta)^s u_{\lambda}( x) = \lambda^{2s} (- \Delta)^s u(\lambda x),u_{\lambda}\left(x\right):=u\left(\lambda x\right),$
	which is crucial for proving the Pohozaev identity, a property that does not hold for \( (I - \Delta)^s \). In 2016, G.Grubb circumvented this difficulty by employing techniques from pseudodifferential operator theory to derive the Pohozaev identity for the fractional pseudo-relativistic Schr\"odinger operator \cite[Example 4.11]{grubb2016integration}.
	
	Our study focuses on the logarithmic Schr\"odinger operator with the symbol \( \ln(1 + |\xi|^2) \), which shares similarities with the logarithmic Laplacian, particularly as it appears in the first-order term of the Taylor expansion of the fractional pseudo-relativistic Schr\"odinger operator (see (\ref{tlzk2})). However, a key distinction is that the logarithmic Schr\"odinger operator is positive definite, which allows for stronger results in certain aspects. For instance, Feulefack proved that the logarithmic Schr\"odinger operator satisfies the maximum principle \cite[Theorem 6.1]{feulefack2023logarithmic}, whereas specific conditions must be met for the maximum principle to hold for the logarithmic Laplacian \cite[Proposition 4.1]{chen2019dirichlet}.
	
	While the logarithmic Schr\"odinger operator has been extensively studied in the literature from probabilistic and potential theoretic perspectives, see \cite{beghin2014geometric, kim2014green, song2003potential, song2006potential}), there has been no study on the existence of solutions to equations involving the logarithmic Schr\"odinger operator with logarithmic nonlinearities to the best of our knowledge.

	The logarithmic Schr\"odinger operator  $( I - \Delta ) ^ { \ln}$ has been introduced in \cite{feulefack2023logarithmic} in a Taylor expansion with respect to the parameter $s$ of the operator $( I - \Delta ) ^ { s }$ near zero, i.e. for $u \in C ^ { \alpha } ( \mathbb { R } ^ { N }),\alpha>0$ and $x \in \mathbb { R } ^ { N }$ 
	\begin{equation}\label{tlzk2}
		( I - \Delta ) ^ { s } u ( x ) = u ( x ) + s ( I - \Delta ) ^ { \ln } u ( x ) + o ( s ) \quad a s \:\: s \rightarrow 0 ^ { + }, 
	\end{equation}
	where the logarithmic Schr\"odinger operator $( I - \Delta ) ^ { \ln } $ appears as the first-order term in (\ref{tlzk2}) and $\left(I-\Delta\right)^s$ could be represented via hypersingular integral, see \cite[page 548]{SG} (also see \cite{fall2014sharp})
	\begin{equation}\label{zhengzehua}
		( I - \Delta ) ^ { s } u ( x ) = u ( x ) + d _ { N , s } p.v.\int _ { \mathbb { R } ^ { N } } \frac { u ( x ) - u ( x+y  ) } { | y | ^ { N + 2 s } } \omega _ { s } ( | y | ) dy ,
	\end{equation}
	where $d _ { N , s } = \frac { \pi ^ { -\frac{N}{2} } 4 ^ { s } } {  -\Gamma ( - s ) } $ is a normalization constant and $\omega _ { s }$ is given by 
	\begin{equation}\label{ws}
		\omega_s(|y|)=2^{1-\frac{N+2s}{2}}|y|^{\frac{N+2s}{2}}K_{\frac{N+2s}{2}}(|y|)=\int_0^{\infty}t^{-1+\frac{N+2s}{2}}e^{-t-\frac{|y|^2}{4t}}dt.
	\end{equation}
	Here the function $K _ { v }$ is the modified Bessel function of the second kind with index $v > 0$ and it is given by the expression 
	\[K _ { v } ( r ) = \frac { ( \pi / 2 ) ^ { \frac{1}{2} } r ^ { v } e ^ { - r } } { \Gamma \left( \frac { 2 v + 1 } { 2 } \right) } \int _ { 0 } ^{\infty}e ^ { - r t } t ^ { v - \frac { 1 } { 2 } } ( 1 + t / 2 ) ^ { v - \frac { 1 } { 2 } } dt .\]
	
	The normalization constant $d _ { N , s }$ in (\ref{zhengzehua}) is chosen such that the operator $( I - \Delta ) ^ { s }$ has the Fourier symbol $\left(1+|\xi|^2\right)^s$. 
	
	Following \cite[Theorem 1.1]{feulefack2023logarithmic}, the logarithmic Schr\"odinger operator $\left(I-\Delta \right)^{\ln}$ can be evaluated as 
	\[\begin{aligned}\label{suanzidingyi}
		\left(I-\Delta\right)^{\ln}u\left(x\right):=\left.\frac d{ds}\right|_{s=0^+}[(I-\Delta)^su](x)
		=\int_{\mathbb{R}^N} \left(u\left(x\right)-u\left(x+y\right)\right)J\left(y\right)dy
	\end{aligned}\]
	for $x\in \mathbb{ R }^N,$ where $d_{N}:=\pi^{-\frac{N}{2}}=\lim_{s\to0^+}\frac{d_{N,s}}{s}$ and
	\begin{equation}\label{J}
		J(y)=d_N\frac{\omega(|y|)}{|y|^N}, \quad \omega(|y|):=2^{1-\frac N2}|y|^{\frac N2}K_{\frac N2}(|y|).
	\end{equation}	
	
	The first motivation to study problem (\ref{case1}) comes from the fact that $\left(I-\Delta\right)^{\ln}$ appears as a first-order expansion term of $\left(I-\Delta\right)^s$. A natural question is to explore the limit of solutions $\{u_s\}$ of the following problem (\ref{fenshujie}) as $s \to 0^+$. 
	
	\begin{equation}\label{fenshujie}
		\left\{\begin{array}{ll}\left(I-\Delta\right)^{s} u=\tau_s u+|u|^{p_s-2}u&\text{ in}\ \, \Omega,\\[2mm]
		\qquad\qquad u=0&\text{ on}\ \, \mathbb{ R }^{N}\setminus\Omega ,
		\end{array}\right.
	\end{equation}
	where 
	\begin{equation}\label{psts1}
		s\in \left(0,1\right),N>2s,\:\tau_s <\lambda_{1,s}^{\omega}\:\ \text{when}\ \,  2<p_s< 2_s^{*}:=\frac{2N}{N-2s}
	\end{equation}
	and 
	\begin{equation}\label{psts2}
		s\in \left(0,1\right),\ \, N\ge 4s,\:\ 1<\tau_s <\lambda_{1,s}^{\omega}\:\:\ \text{when}\:\: p_s= 2_s^{*}.
	\end{equation}
	The definition of $\lambda_{1,s}^{\omega}$  will be given by (\ref{diyite}) and  $\lambda_{1,s}^{\omega}>1$ by (\ref{1da}).	
	
	If $p \in C^{1}\left(\left[0,\frac { N } { 4 }\right]\right)$, combining
	\[|u|^{p_s-2}u=u+sp^{\prime}\left(0\right)u\ln|u|+o(s)\quad a s \:\: s \rightarrow 0 ^ { + }\]
	and
	\[( I - \Delta ) ^ { s } u ( x ) = u ( x ) + s ( I - \Delta ) ^ { \ln} u ( x ) + o ( s ) \quad a s \:\: s \rightarrow 0 ^ { + }, \] 
	we can observe that limit of solutions $\left\{u_s\right\}$ in (\ref{fenshujie}) is related to problem (\ref{case1}).
	
	Thus, a direct approach is to consider the limit of the nontrivial solutions of problem (\ref{fenshujie}) to prove the  existence to problem (\ref{case1}). In fact, this can be done, as shown in Theorem \ref{limits}. However, we will independently present the first existence result for  nontrivial least-energy solution to problem (\ref{case1}) without relying on limits. 
	
	\begin{theorem}\label{hx}
		Let $N \geq 1$, and $\Omega \subset \mathbb{R}^N$ be a bounded Lipschitz set. Then:
		
		$(i)$ For every $k \in\big(0,\frac { N } { 4 }\big)$ and $\lambda \in \mathbb{R}$, the problem (\ref{case1}) has a Nehari least-energy solution $u \in \mathcal{H}_0^{\ln}(\Omega) \setminus \{0\}$ and
		\[
		J_{\ln}(u) = \inf_{v \in \mathcal{N}} J_{\ln}(v) = \inf_{\sigma \in \mathcal{T}} \max_{t \in [0, 1]} J_{\ln}(\sigma(t)) > 0,
		\]
		where 
		$$\mathcal{T}:= \big\{ \sigma \in C^0([0, 1], \mathcal{H}_0^{\ln}(\Omega))\!: \sigma(0) = 0, \sigma(1) \neq 0, J_{\ln}(\sigma(1)) \leq 0 \big\}.$$
		 Furthermore, all least-energy solutions of (\ref{case1}) do not change sign in $\Omega$.
		
		$(ii)$ For every $k < 0$ and $\lambda \in \mathbb{R}$, the equation (\ref{case1}) has a global least-energy solution $u \in \mathcal{H}_0^{\ln}(\Omega) \setminus \{0\}$. Moreover, the global least-energy solutions of (\ref{case1}) do not change sign in $\Omega$.
		
Furthermore,  this solution  is unique (up to a sign).	
	\end{theorem}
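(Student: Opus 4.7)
The starting point is the natural energy functional associated to (\ref{case1}):
$$J_{\ln}(u)=\frac{1}{2}\mathcal{E}_{\ln}(u,u)-\frac{\lambda}{2}\|u\|_{L^2}^2-\frac{k}{2}\int_\Omega u^2\ln|u|\,dx+\frac{k}{4}\|u\|_{L^2}^2,\qquad u\in\mathcal{H}_0^{\ln}(\Omega),$$
where $\mathcal{E}_{\ln}$ is the Dirichlet form of $(I-\Delta)^{\ln}$; I will analyse it in the two sign regimes of $k$ separately. The decisive structural features are that the nonlinearity $k u\ln|u|$ is sign-indefinite on $(-1,1)$, grows slower than any power at infinity, and that its primitive interacts with the quadratic part through the logarithmic Sobolev inequality, whose critical threshold is precisely $k=4/N$.

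\textbf{Part (i), $0<k<4/N$.} My plan is to execute the Nehari/mountain-pass program. For each $u\neq 0$, the map $t\mapsto J_{\ln}(tu)$ has derivative $t\bigl(\mathcal{E}_{\ln}(u,u)-\lambda\|u\|_2^2-k\|u\|_2^2\ln t-k\int u^2\ln|u|\,dx\bigr)$, whose bracket is strictly decreasing in $t$, so there is a unique $t^{*}(u)>0$ realising $\max_{t\ge0}J_{\ln}(tu)$; this gives a well-defined projection onto $\mathcal{N}$. Combining $k<4/N$ with the logarithmic Sobolev inequality in $\mathcal{H}_0^{\ln}(\Omega)$ yields uniform positive lower bounds for the norm and for $J_{\ln}$ over $\mathcal{N}$, and the standard Nehari/mountain-pass duality (every admissible path in $\mathcal{T}$ must cross $\mathcal{N}$, and the projection produces a path realising the minimum energy) then gives the identity of the three quantities in the theorem. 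Existence of a minimizer reduces to verifying the Palais-Smale condition: a PS-sequence is bounded because the subcritical constraint $k<4/N$ prevents the logarithmic term from dominating the quadratic form, and the compact embedding $\mathcal{H}_0^{\ln}(\Omega)\hookrightarrow L^2(\Omega)$ together with a Brezis-Lieb-type splitting for $u\mapsto\int u^2\ln|u|\,dx$ upgrades weak to strong convergence. Non-sign-change is then free: $F(u):=\tfrac12 u^2\ln|u|-\tfrac14 u^2$ is even and $\mathcal{E}_{\ln}(|u|,|u|)\le\mathcal{E}_{\ln}(u,u)$ from the non-negativity of the jump kernel $J$ in (\ref{J}), so $J_{\ln}(|u|)\le J_{\ln}(u)$ and any minimizer must have constant sign.

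\textbf{Part (ii), $k<0$.} Writing $k=-|k|$, the functional now contains the positive term $\tfrac{|k|}{2}\int u^2\ln|u|\,dx$, which together with the quadratic part yields coercivity of $J_{\ln}$ on $\mathcal{H}_0^{\ln}(\Omega)$ (after absorbing the negative logarithmic contribution on $\{|u|\le1\}$ into the $L^2$ norm). Weak lower semicontinuity of $\mathcal{E}_{\ln}$, strong $L^2$-compactness, and Fatou for the positive part of $u^2\ln|u|$ guarantee that any weakly convergent minimizing sequence yields a minimizer. The infimum is strictly negative because $J_{\ln}(tv)\sim\tfrac{|k|}{2}t^2\ln t\cdot\|v\|_2^2\to0^-$ as $t\to0^+$ for $v\neq 0$, so the minimizer is nontrivial; non-sign-change is as in (i). The principal obstacle is the uniqueness up to sign: I propose to combine the strict convexity of $s\mapsto s\ln s$ on $(0,\infty)$ with the quadratic-form convexity inequality
$$\mathcal{E}_{\ln}\bigl(\sqrt{tu^2+(1-t)v^2},\sqrt{tu^2+(1-t)v^2}\bigr)\le t\,\mathcal{E}_{\ln}(u,u)+(1-t)\,\mathcal{E}_{\ln}(v,v),\qquad t\in[0,1],$$
valid for $u,v\ge0$ thanks to the positivity and symmetry of the kernel $J$; when $k<0$, these two ingredients make $t\mapsto J_{\ln}(\sqrt{tu^2+(1-t)v^2})$ strictly concave unless $u\equiv v$, so two positive minimizers must coincide. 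Establishing this convexity inequality rigorously for the logarithmic Schrödinger Dirichlet form (rather than merely for classical $H^s$) is the step I expect to require the most care.
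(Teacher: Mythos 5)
Your proposal follows essentially the same route as the paper: for $k\in(0,4/N)$ a Nehari/mountain-pass scheme with uniform lower bounds on $\mathcal{N}$ coming from Pitt's (logarithmic Sobolev) inequality, boundedness of PS sequences from $k<4/N$, compact embedding into $L^2$ plus a Brezis--Lieb splitting of $\int u^2\ln|u|$; for $k<0$ coercivity, weak lower semicontinuity, smallness of $J_{\ln}(t\varphi)$ as $t\to0^+$, and hidden convexity along $\sqrt{tu^2+(1-t)v^2}$ for uniqueness. Two points need repair before this is a proof.

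First, in part (i) the step ``$\mathcal{E}_{\ln}(|u|,|u|)\le\mathcal{E}_{\ln}(u,u)$, hence $J_{\ln}(|u|)\le J_{\ln}(u)$, hence the minimizer has constant sign'' is not conclusive as stated: the minimization is over the Nehari manifold, and $|u|$ need not belong to $\mathcal{N}$, so $J_{\ln}(|u|)\le c$ contradicts nothing. You must project $|u|$ back onto $\mathcal{N}$: with $t^0_{|u|}=\exp\bigl(\tfrac{\mathcal{E}_{\ln}(|u|,|u|)-\lambda\|u\|_2^2-k\int u^2\ln|u|}{k\|u\|_2^2}\bigr)$ the kernel inequality gives $t^0_{|u|}\le1$ (since $t^0_u=1$ for $u\in\mathcal{N}$ and $k>0$), and then $c\le J_{\ln}(t^0_{|u|}|u|)=\tfrac{k}{4}(t^0_{|u|})^2\|u\|_2^2\le\tfrac{k}{4}\|u\|_2^2=c$ forces $t^0_{|u|}=1$ and equality in the kernel inequality, which is what yields constancy of sign. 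Second, in the uniqueness argument for $k<0$ you assert that the two convexity ingredients make $t\mapsto J_{\ln}(\sqrt{tu^2+(1-t)v^2})$ strictly \emph{concave}; this must be strict \emph{convexity}. Along the affine path $\rho_t=tu^2+(1-t)v^2$ the quadratic form satisfies $\mathcal{E}_{\ln}(\sqrt{\rho_t},\sqrt{\rho_t})\le t\,\mathcal{E}_{\ln}(u,u)+(1-t)\,\mathcal{E}_{\ln}(v,v)$ (convexity of $\rho\mapsto\mathcal{E}_{\ln}(\sqrt\rho,\sqrt\rho)$), the $L^2$ terms are affine in $t$, and for $k<0$ the term $-\tfrac{k}{4}\int\rho_t\ln\rho_t$ is strictly convex unless $u^2\equiv v^2$; strict convexity is precisely what rules out two distinct endpoint minimizers (strict concavity would rule out nothing, since a concave function is minimized at the endpoints). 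With the sign corrected the argument is the one the paper uses, following the convexity-by-paths scheme of Angeles--Salda\~na.
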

	
 Here, the detailed definitions of $J_{\ln}$, $\mathcal{N}$ and $\mathcal{H}_0^{\ln}(\Omega)$ are defined in (\ref{nengliang2}),  (\ref{nehari2}) and  (\ref{duishukj}) respectively.  Note that  for $k = 0$,   equation (\ref{case1}) reduces to the Dirichlet eigenvalue problem
	\[
	\begin{cases}
		(I - \Delta)^{\ln} u = \lambda u & \text{in }\,  \Omega, \\[1mm]
		\qquad \qquad\,  u = 0 & \text{in } \, \mathbb{R}^N \setminus \Omega.
	\end{cases}
	\]
The existence of a sequence eigenvalues and corresponding eigenfunctions has been studied in \cite{feulefack2023logarithmic}. 
	
	For $k \in\big(0,\frac { N } { 4 }\big)$, we employ the Ekeland variational method, a consistent lower bound for the elements in the Nehari manifold $\mathcal{N}$ (see Lemma \ref{zx}) and the Palais-Smale condition to prove the existence.
	
	For $k < 0$, the proof is based on the coercivity, boundedness below and lower semicontinuity of $J_{\ln}$. Uniqueness is established using convexity by paths.
	
	It is worth noting that $\frac{4}{N}$ is critical, corresponding to the critical exponent in the logarithmic Schr\"odinger equation with logarithmic nonlinearity. When $k = \frac{4}{N}$, using Pitt's inequality (\ref{pitt}), we see that the growth rate of the logarithmic nonlinearity in (\ref{case1}) matches that of the principal term. Therefore, it is impossible to deduce that the sequence $\{u_n\}$ is bounded based solely on the boundedness of the functional $J_{\ln}(u_n)$, as shown in Proposition \ref{yjxx}.
	
	In \cite{hernandez2022small}, Alberto Saldana investigated logarithmic Laplacian, analyzing small order asymptotic behavior in nonlinear fractional problems. They provided the first existence result for solutions with logarithmic nonlinear terms in the logarithmic Laplacian framework for $k \in\big(0,\frac { N } { 4 }\big)$. However, the existence of solutions for $k \geq \frac{4}{N}$ was not addressed.    The existence of solutions to the critical logarithmic Schr\"odinger equation with $k \geq \frac{4}{N}$ still remains open.

	For compactly supported Dini continuous functions $u: \mathbb { R } ^ { N } \rightarrow \mathbb { R } ,$ the logarithmic Laplacian $( - \Delta ) ^ { \ln }$ has the integro-differential formula defined in \cite{chen2019dirichlet} \[( - \Delta ) ^ { \ln } u ( x ) = c _ { N } \lim _ { \varepsilon \rightarrow 0 } \int _ { \mathbb { R } ^ { N } \setminus B _ { \varepsilon } ( x ) } \frac { u ( x ) 1_ { B _ { 1 } ( x ) } ( y ) - u ( y ) } { | x - y | ^ { N } } dy + \rho _ { N } u ( x )\] with the constants $c _ { N } : = \frac { \Gamma ( N / 2 ) } { \pi ^ { N / 2 } }$ and $\rho _ { N } = 2 \ln 2 + \psi \left( \frac { N } { 2 } \right) - \gamma,$ where $\psi=\frac{\Gamma^{\prime}}{\Gamma}$ is the Digamma function, $\gamma=-\Gamma^{\prime}\left(1\right)$ is the Euler Mascheroni constant. It was demonstrated in \cite{chen2019dirichlet} that for \( s = 0 \), the following holds for \( u \in C_c^3 \left( \mathbb{R}^N \right) \)  
	\begin{equation}\label{tlzk1}
		(-\Delta)^s u(x) = u(x) + s(-\Delta)^{\ln} u(x) + o(s) \quad \text{as} \:\: s \to 0^+,
	\end{equation}
	where 
	\begin{equation}\label{jy1}
		( - \Delta ) ^ { s } u ( x ) = c _ { N , s } \lim _ { \varepsilon \rightarrow 0 ^ { + } } \int _ { \mathbb { R } ^ { N }\setminus B _ { \varepsilon } ( 0 ) } \frac { u ( x ) - u ( x + y ) } { | y | ^ { N + 2 s } } d y ,
	\end{equation}
with  $c_{N,s} = 2^{2s}\pi ^{-\frac{N}{2}}s\frac{\Gamma \left(\frac{N+2s}{2}\right)}{\Gamma\left(1-s\right)}
	$ and $\Gamma$ being the Gamma function.  The definition of the fractional Laplacian could see  \cite{di2012hitchhiker's}.
	
	After obtaining the existence of solutions, the next important issue is the regularity of solutions. When $k < 0$ and $\lambda \in \mathbb{R}$, we obtain the following result:

	\begin{prop}\label{youjiexingd}
	Assume that  $\Omega\subset \mathbb{ R }^N$ is a bounded open set with the uniform exterior sphere condition. Let $k < 0$,  $\lambda \in \mathbb{R}$ and $u$ be the weak solution  of (\ref{case1}) obtained in Theorem \ref{hx},   then $u \in C(\Omega )\cap L^{\infty}(\Omega)$ and there exist
		$\beta=\beta(\Omega)\in(0,1)$ and a constant
		$C = C\bigl(N,\lambda,k,\Omega\bigr) > 0$ such that
		\begin{equation}\label{eq:log-Holder}
			\sup_{\substack{x,y\in \mathbb{ R }^N\\ x\ne y}}
			\frac{|u(x)-u(y)|}{\ell(|x-y|)^{\beta}}
			\le C\|u\|_{L^{2}(\mathbb{R}^{N})}
		\end{equation}
		and $$\|u\|_{L^{\infty}(\Omega)} \leq C \|u\|_{L^2(\Omega)},$$ where
		\[
		\ell(\rho):=\frac{1}{\bigl|\ln\bigl(\min\{\rho,\tfrac1{10}\}\bigr)\bigr|}.
		\]
	\end{prop}
	
	To further explore the relationship between the solutions $\{u_s\}$ of problem (\ref{fenshujie}) and (\ref{case1}) as $s \to 0^{+}$, we first give the existence of nontrivial least-energy solutions to problem (\ref{fenshujie}) involving subcritical and critical nonlinearities.

\begin{theorem}\label{feshujiedl}
		Assume that  $\Omega \subset \mathbb{R}^N$ is a bounded Lipschitz domain,  $s\in(0,1)$, $p_s>2$, $\tau_s\in\mathbb{R}$ and $N\ge1$
	satisfy either \eqref{psts1} or \eqref{psts2}. 
	
	Then the problem \eqref{fenshujie} has a Nehari least-energy nonnegative solution
	$u_s\in \mathcal{H}_{\omega}^s(\Omega)\setminus\{0\}$ and 
	\begin{equation}\label{moun1}
		J_{\omega,s}\left(u_s\right)
		=\inf_{\mathcal { N } _ { \omega,s }}J_{\omega,s}
		=\inf_{\sigma\in\mathcal { T } _ { \omega }^{s}}
		\max_{t\in[0,1]}J_{\omega,s}(\sigma(t))>0,
	\end{equation}
	where 
	$\mathcal { T } _ { \omega }^{s}
	:=\big\{\sigma\in C^0([0,1],\mathcal{H}_{\omega}^{s}(\Omega))\!:
	\sigma(0)=0,\ \sigma(1)\neq0,\ J_{\omega,s}(\sigma(1))\leq 0\big\}$.
\end{theorem}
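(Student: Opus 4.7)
The plan is to apply a Nehari manifold and mountain-pass construction in the Hilbert space $\mathcal{H}_\omega^s(\Omega)$, handling the subcritical ($2<p_s<2_s^*$) and critical ($p_s=2_s^*$) cases in parallel. With the energy
\[J_{\omega,s}(u)=\tfrac{1}{2}\langle(I-\Delta)^s u,u\rangle-\tfrac{\tau_s}{2}\int_\Omega u^2\,dx-\tfrac{1}{p_s}\int_\Omega |u|^{p_s}\,dx,\]
the hypothesis $\tau_s<\lambda_{1,s}^\omega$ combined with the variational characterization of $\lambda_{1,s}^\omega$ shows that the quadratic part of $J_{\omega,s}$ is an equivalent norm on $\mathcal{H}_\omega^s(\Omega)$. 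Together with $p_s>2$, this yields the mountain-pass geometry: a shell $\{\|u\|=\rho\}$ on which $J_{\omega,s}\ge\alpha>0$, and some $e$ with $J_{\omega,s}(e)\le 0$. For each $u\neq 0$ the map $t\mapsto J_{\omega,s}(tu)$ has a unique positive critical point, which identifies the mountain-pass level $c$ with $\inf_{\mathcal{N}_{\omega,s}} J_{\omega,s}$; the Nehari constraint forces a uniform positive lower bound on $\|v\|$ for $v\in\mathcal{N}_{\omega,s}$, so $c>0$, giving the equalities in (\ref{moun1}).

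Next I would produce a Palais-Smale sequence at level $c$ via the mountain-pass lemma. Boundedness in $\mathcal{H}_\omega^s(\Omega)$ follows from the standard combination of $J_{\omega,s}(u_n)=c+o(1)$ and $\langle J'_{\omega,s}(u_n),u_n\rangle=o(\|u_n\|)$, using $p_s>2$ and $\tau_s<\lambda_{1,s}^\omega$. In the subcritical range the compact embedding $\mathcal{H}_\omega^s(\Omega)\hookrightarrow L^{p_s}(\Omega)$ upgrades weak to strong convergence of the nonlinear term and delivers a nontrivial critical point $u$ with $J_{\omega,s}(u)=c$. Nonnegativity is obtained by noting that the nonlocal quadratic form of $(I-\Delta)^s$ has a positive kernel representation, so that the pointwise inequality $(a-b)^2\ge(|a|-|b|)^2$ yields $\langle(I-\Delta)^s|u|,|u|\rangle\le\langle(I-\Delta)^s u,u\rangle$; consequently $J_{\omega,s}(|u|)\le J_{\omega,s}(u)$, and the mountain-pass minimizer may be assumed sign-definite, which I then take to be nonnegative.

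The main obstacle is the critical case $p_s=2_s^*$, where the embedding into $L^{2_s^*}(\Omega)$ fails to be compact and only a Brezis-Nirenberg type local Palais-Smale condition is available. To rule out loss of compactness I would establish the strict level bound
\[c<\frac{s}{N}\,S_{\omega,s}^{N/(2s)},\]
where $S_{\omega,s}$ denotes the sharp constant of $\mathcal{H}_\omega^s(\mathbb{R}^N)\hookrightarrow L^{2_s^*}(\mathbb{R}^N)$. The natural test families are rescaled and cut-off fractional Sobolev extremals $U_\varepsilon(x)=\varepsilon^{-(N-2s)/2}U(x/\varepsilon)$ concentrating at an interior point of $\Omega$; inserting them into $J_{\omega,s}$ reproduces the top-order term $\tfrac{s}{N}S_{\omega,s}^{N/(2s)}$, cancels the $L^{2_s^*}$ contribution, and leaves a correction controlled by the term $-\tfrac{\tau_s}{2}\int u_\varepsilon^2$ together with the mismatch between the $(I-\Delta)^s$ and $(-\Delta)^s$ quadratic forms arising from the symbol $(1+|\xi|^2)^s-|\xi|^{2s}$. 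The combined assumption $1<\tau_s<\lambda_{1,s}^\omega$ with $N\ge 4s$ is precisely what makes this correction strictly negative: the gap $\tau_s-1>0$ absorbs the excess $L^2$-type mass produced by the $I$-correction in the symbol, and $N\ge 4s$ supplies the correct scaling exponents so that the subtracted $L^2$ integral dominates the error (the analogue of the classical Brezis-Nirenberg restriction $N\ge 4$). Once the strict bound is in hand, a concentration-compactness argument yields relative compactness of bounded PS sequences at level $c$, and the resulting nontrivial critical point can again be taken nonnegative by the rearrangement from the subcritical case.
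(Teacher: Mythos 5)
Your proposal is correct in outline and shares the paper's overall architecture (Nehari manifold, mountain-pass level, uniform positive lower bound on $\mathcal{N}_{\omega,s}$ giving $c>0$, compactness in the subcritical range, a strict level estimate in the critical range, and nonnegativity via $\mathcal{E}_{\omega,s}(|u|,|u|)\le\mathcal{E}_{\omega,s}(u,u)$), but it diverges from the paper in two steps worth noting. First, you generate the Palais--Smale sequence by the mountain-pass theorem, whereas the paper applies Ekeland's variational principle directly on the constraint $\mathcal{N}_{\omega,s}=\varphi^{-1}(0)$ and kills the Lagrange multipliers $\xi_n$ using the uniform lower bound $\|u_n\|_{p_s}\ge C_1$ from Lemma \ref{fenshuns}; the two routes produce the same sequence at level $c=\inf_{\mathcal{N}_{\omega,s}}J_{\omega,s}$, and the paper's choice avoids invoking a deformation lemma. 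Second, and more substantively, for the critical case $p_s=2_s^*$ you propose to establish $c_\omega^s<\frac{s}{N}\kappa_{N,s}^{-N/(2s)}$ by a direct Aubin--Talenti bubble computation adapted to the $(I-\Delta)^s$ form. The paper instead reduces to the known fractional result: using the pointwise symbol inequality $(1+|\xi|^2)^s\le 1+|\xi|^{2s}$ it bounds $\|u\|_{\omega,s}^2-\tau_s\|u\|_2^2\le\|u\|_s^2-(\tau_s-1)\|u\|_2^2$, and then cites \cite[Theorem 4]{servadei2015brezis} with $\lambda=2(\tau_s-1)/c_{N,s}>0$ (this is exactly where $\tau_s>1$ and $N\ge 4s$ enter, matching your heuristic that the gap $\tau_s-1$ absorbs the $L^2$-type excess of the symbol). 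Your direct computation would work and is more self-contained, but it requires redoing the asymptotic expansion of the nonlocal quadratic form on the truncated bubbles; the paper's reduction is shorter. Note also that the paper does not need full relative compactness of critical PS sequences as you claim: it only argues that the weak limit cannot vanish (if it did, the level would equal $\frac{s}{N}L\ge\frac{s}{N}\kappa_{N,s}^{-N/(2s)}$, contradicting the strict bound), and then recovers the least-energy property by Fatou's lemma applied to $\|u_n\|_{p_s}^{p_s}$; you may want to adopt this weaker and easier-to-justify conclusion in place of the concentration-compactness step.
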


Here $J_{\omega,s},\mathcal{ N }_{\omega,s}$ and $\mathcal{ H }_{\omega}^s(\Omega )$ will be given in (\ref{nengliang1}), (\ref{nehari1}) and (\ref{fenskj}) respectively. We remark  that in problem (\ref{fenshujie}), $\tau_s$ can be less than or equal to 0 when $2 < p_s < 2_s^*$, whereas in problems (\ref{fen1}) and (\ref{fen2}), $\lambda$ must be greater than 0. This is because $\tau_s > 0$ is crucial for showing that the solution is nontrivial in the critical case. However, we obtain a uniform positive lower bound for elements in the Nehari manifold combined with Ekeland's variational method to prove this result. In the critical case i.e. $p_s = 2_s^*$, we need $\tau_s \in (1, \lambda_{1,s}^\omega)$ to show that $u$ is nontrivial. This is different from the critical fractional Laplacian equation.
	
A. Saldana in \cite{hernandez2022small}  established an analogous result for the fractional Laplacian, omitting the term $\tau_s u$ and considering only the subcritical case. As shown by Equation (\ref{moun1}), problem (\ref{fenshujie}) likewise exhibits a mountain pass structure, with $u$ being a Nehari least-energy solution.

	The key to proving the subcritical case was obtaining the mountain pass structure and a consistent lower bound for elements in the Nehari manifold $\mathcal{N}_{\omega,s}$ (see Lemma \ref{fenshuns} and Proposition \ref{yjx1}), and then applying Ekeland's variational method to obtain a convergent sequence of functionals $J_{\omega,s}(u_n)$ and $J_{\omega,s}'(u_n)$, where $u_n \in \mathcal{N}_{\omega,s}$ is important. It is more difficult to prove that the solution is nontrivial in the critical case due to the lack of compactness. We prove this by contradiction, using the mountain pass structure (Lemma \ref{fenshujiemoun}), the geometry of the functional $J_{\omega,s}$, and \cite[Theorem 4]{servadei2015brezis} to derive a contradiction.
	
	So far, $L^{\infty}$ bounds, as well as the uniqueness or multiplicity properties of solutions are not known for logarithmic Laplacian problems in the superlinear regime. These problems also remain open for logarithmic Schr\"odinger operator.
	
	Finally, we consider the limit of the solutions $\{u_s\}$ to problem (\ref{fenshujie}) as $s \to 0^{+}$. We prove the following theorem:
	
	\begin{theorem}\label{limits}
		Let $N \geq 1$, $\Omega \subset \mathbb { R } ^ { N }$ be a bounded   Lipschitz domian and $\left( s _ { k } \right) _ { k \in \mathbb { N } } \subset \left( 0 , s_0 \right]$ satisfy that $\displaystyle \lim _ { k \rightarrow \infty } s _ { k } = 0$, where $s_0<\min \big\{1,\frac { N } { 4 } \big\}.$ Let $u _ { s _ { k } } \in \mathcal { H } _ { w } ^ { s _ { k } } ( \Omega )$ be least-energy solutions of problem (\ref{fenshujie})
		where $p_s:=p(s)\in C^{1}\big(\left[0,s_0\right]\big),$
		\begin{equation}\label{psdtj}
			2 < p ( s ) < 2 _ { s } ^ { * } : = \frac { 2 N } { N - 2 s } \ {\rm for}\ s \in ( 0 , s_0),\quad \ p ^ { \prime } ( 0 ) \notin \big\{ 0 , \frac{4}{N} \big\} ,
		\end{equation} 
		and 
		\begin{equation}\label{psdtj2}
			\tau_s:=\tau(s) \in C^{1}\big(\left[0,s_0\right]\big),\ \ \tau(s)\in  (-\infty,\lambda_{1,s}^{\omega}),\quad  \tau_s=o(s)\ {\rm as}\ s\rightarrow 0^+,
		\end{equation}
		then there is a least-energy solution $u  \in \mathcal { H }_0^{\ln} ( \Omega ) \backslash \{ 0 \}$ satisfying problem (\ref{case1}) with $\lambda=0,\ k=p^{\prime}\left(0\right)$
		such that passing to a subsequence, $\displaystyle \lim_{k\rightarrow \infty}u_{s_k}= u$ in $L^2(\mathbb{ R }^N )$.
		Moreover,
		\[\lim _ { k \rightarrow \infty } \frac { 1 } { s _ { k } } J _ { \omega,s _ { k } } \left( u _ { s _ { k } } \right) = J  \left( u  \right) =\frac{p^{\prime}\left(0\right)}{4}\|u\|_2^2 \quad \text{and} \quad \lim _ { k \rightarrow \infty }  \| u_ { s _ { k } } \|_ {\omega,s _ { k } } =  \| u \| _ { 2 } .\] 	 
	\end{theorem}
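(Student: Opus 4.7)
The plan is to pass to the limit in the family of least-energy solutions $\{u_{s_k}\}$ and identify the limit as a least-energy solution of the logarithmic problem, exploiting the Taylor expansions $(I-\Delta)^{s}=I+s(I-\Delta)^{\ln}+o(s)$ and $|u|^{p_s-2}u=u+sp'(0)u\ln|u|+o(s)$ together with the mountain-pass/Nehari characterizations already obtained in Theorem~\ref{hx} and Theorem~\ref{feshujiedl}.

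First I would establish uniform bounds. Pick a fixed nontrivial test function $v\in C_c^\infty(\Omega)$ and, by scaling it along the Nehari manifold $\mathcal N_{\omega,s_k}$, build a comparison path $\sigma_k\in \mathcal T_w^{s_k}$. Using the mountain-pass value from (\ref{moun1}) and the expansions above, I would show $\tfrac{1}{s_k}J_{\omega,s_k}(u_{s_k})\le \max_{t}\tfrac{1}{s_k}J_{\omega,s_k}(\sigma_k(t))\to \max_t J_{\ln}(tv)$, giving a uniform upper bound on $\tfrac{1}{s_k}J_{\omega,s_k}(u_{s_k})$. Combining this with the Nehari identity $J_{\omega,s_k}(u_{s_k})=(\tfrac12-\tfrac{1}{p_{s_k}})\|u_{s_k}\|_{p_{s_k}}^{p_{s_k}}$-type relation, the assumption $\tau_s=o(s)$ and $p'(0)\neq 0,4/N$, one obtains that $\tfrac{1}{s_k}\|u_{s_k}\|_{\omega,s_k}^2$ stays bounded, and hence $\{u_{s_k}\}$ is bounded in $\mathcal H_0^{\ln}(\Omega)$ in the sense that the quadratic form associated with $(I-\Delta)^{\ln}$ applied to $u_{s_k}$ is uniformly bounded; the embedding $\mathcal H_0^{\ln}(\Omega)\hookrightarrow L^2(\Omega)$ is compact, so up to a subsequence $u_{s_k}\to u$ strongly in $L^2(\mathbb{R}^N)$ and a.e.

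Next I would identify the equation satisfied by $u$. Testing the equation (\ref{fenshujie}) against $\varphi\in C_c^\infty(\Omega)$, dividing by $s_k$, and sending $k\to\infty$ using the Taylor expansions (with the remainder $o(s)$ controlled by higher Sobolev norms of $\varphi$) gives
\begin{equation*}
\int_{\mathbb R^N}u\,(I-\Delta)^{\ln}\varphi\,dx=p'(0)\int_{\Omega}u\ln|u|\,\varphi\,dx,
\end{equation*}
since $\tau_s=o(s)$ kills the linear eigenvalue term; thus $u$ solves (\ref{case1}) with $\lambda=0$ and $k=p'(0)$. The non-triviality of $u$ is the first serious obstacle: it follows from the uniform positive lower bound for elements of $\mathcal N_{\omega,s_k}$ given by Proposition~\ref{yjx1} (rescaled suitably by $s_k$), which forbids $u_{s_k}\to 0$ in $L^2$. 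Since $p'(0)\in(0,4/N)$ or $p'(0)<0$ (depending on sign), $u$ is a nontrivial solution of (\ref{case1}), and the sign-preservation part of Theorem~\ref{hx} applies.

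The hardest step is showing that $u$ is in fact a \emph{least-energy} solution of (\ref{case1}). One direction, $J_{\ln}(u)\le \liminf_k \tfrac{1}{s_k}J_{\omega,s_k}(u_{s_k})$, follows from lower semicontinuity of the logarithmic quadratic form together with Fatou applied to $\int u^2\ln|u|$ (using the strong $L^2$ and a.e. convergence plus equiintegrability from the uniform $\mathcal H_0^{\ln}$ bound). The reverse direction requires producing, for any competitor $w\in \mathcal H_0^{\ln}(\Omega)\setminus\{0\}$ with $J_{\ln}$ near $\inf_{\mathcal N}J_{\ln}$, a corresponding path in $\mathcal T_w^{s_k}$ whose maximum energy is $s_k J_{\ln}(w)+o(s_k)$; this is the mountain-pass/Nehari comparison performed in the first step but now applied to the ground state of the limit problem from Theorem~\ref{hx}. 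Combining both inequalities yields $\tfrac{1}{s_k}J_{\omega,s_k}(u_{s_k})\to J_{\ln}(u)=\inf_{\mathcal N}J_{\ln}$. Finally, since $u$ solves (\ref{case1}) with $\lambda=0,k=p'(0)$, testing the equation against $u$ gives $\langle u,(I-\Delta)^{\ln}u\rangle=p'(0)\int u^2\ln|u|$, so
\begin{equation*}
J_{\ln}(u)=\tfrac{1}{2}\langle u,(I-\Delta)^{\ln}u\rangle-\tfrac{p'(0)}{2}\int u^2\ln|u|+\tfrac{p'(0)}{4}\|u\|_2^2=\tfrac{p'(0)}{4}\|u\|_2^2,
\end{equation*}
and the convergence of the energies together with the identification of $J_{\ln}(u)$ forces $\|u_{s_k}\|_{\omega,s_k}\to \|u\|_2$, completing the proof.
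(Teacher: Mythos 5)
Your overall strategy matches the paper's: uniform bounds, weak/strong limits, identification of the limit equation via the two Taylor expansions, nontriviality from the uniform lower bound on the Nehari manifold, and the two-sided comparison of energy levels (Fatou on one side, projection of the limit ground state onto $\mathcal{N}_{\omega,s_k}$ via $t^{s_k}_{v_n}$ on the other). However, there is a genuine gap in your first step. You claim that the energy bound plus the Nehari identity yield that $\tfrac{1}{s_k}\|u_{s_k}\|_{\omega,s_k}^2$ stays bounded and that this gives the uniform $\mathcal{H}_0^{\ln}$ bound. This cannot be right as stated: since $\|u_{s_k}\|_{\omega,s_k}^2=\int(1+|\xi|^2)^{s_k}|\widehat{u}_{s_k}|^2\,d\xi\geq\|u_{s_k}\|_2^2$, boundedness of $\tfrac{1}{s_k}\|u_{s_k}\|_{\omega,s_k}^2$ would force $\|u_{s_k}\|_2\to 0$, contradicting the nontriviality you establish later. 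What the easy argument actually gives is only $\|u_{s_k}\|_{\omega,s_k}^2\leq C$ (via $J_{\omega,s}(u_s)=(\tfrac12-\tfrac1{p_s})\|u_s\|_{p_s}^{p_s}$ and comparison with $t_\varphi^{s}\varphi$), and the quantity that must then be bounded is $\mathcal{G}_k:=\tfrac{1}{s_k}\bigl(\|u_{s_k}\|_{\omega,s_k}^2-\|u_{s_k}\|_2^2\bigr)\geq\mathcal{E}_\omega(u_{s_k},u_{s_k})$. Bounding $\mathcal{G}_k$ is the hardest part of the whole theorem: by the Nehari identity it is controlled by $\int_0^1 p'(s\tau)\int_{\{|u_s|\ge1\}}|u_s|^{2^*_{s\tau}}\ln|u_s|\,dx\,d\tau$, which by the ``intermediate'' logarithmic Sobolev inequality (Lemma \ref{log}) is in turn controlled by $\mathcal{E}_\omega(u_{s_k},u_{s_k})$ itself; one must absorb this term using precisely the strict inequality $p'(0)<\tfrac4N$ (the factor $\delta<1$ in Lemma \ref{beikongzhi}). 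Your proposal treats this closure as automatic, and without it the passage to a convergent subsequence and the whole Fatou/limit machinery has no foundation.

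A secondary, smaller issue: the nontriviality of $u$ does not follow directly from the uniform lower bound $\|u_{s_k}\|_{p_{s_k}}\geq C_1$ (and the result you cite, Proposition \ref{yjx1}, is an upper bound, not a lower one). Since $p_{s_k}\to 2$, a lower bound on the $p_{s_k}$-norms must be converted into a lower bound on $\|u_{s_k}\|_2$ by interpolation, $\|u_{s_k}\|_{p_{s_k}}^{p_{s_k}}\leq\|u_{s_k}\|_2^{2(1-\lambda_k)}\|u_{s_k}\|_{2^*_{s_k}}^{2^*_{s_k}\lambda_k}$ with $\lambda_k\to p'(0)\tfrac N4\in(0,1)$, together with the uniform Sobolev control of $\|u_{s_k}\|_{2^*_{s_k}}$; here again the hypothesis $p'(0)<\tfrac4N$ is what makes the exponent $1-\lambda_k$ stay away from $0$. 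Also note that the hypotheses force $p'(0)\in(0,\tfrac4N)$, so the case $p'(0)<0$ you mention does not occur. The remainder of your argument (identification of the limit equation and the two-sided energy comparison) is essentially the paper's proof.
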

\medskip	
	
	There exists functions $p(s)$ and $\tau(s)$ satisfing the above conditions, such as
	$$ p(s) = 2\lambda + (1-\lambda) 2_s^*, \ \ \tau(s) = s^\alpha  \quad {\rm with}\ \ \lambda \in (0,1), \  \, \alpha > 1. $$
	
	In fact, the above assumptions imply that $p'(0) \in\big(0,\frac { N } { 4 }\big)$. The condition $p'(0) \neq 0$ is crucial, as shown in Lemma \ref{fenshuns} and Lemma \ref{fenshuzuida}. On the other hand, in the critical case where $p'(0) = \frac{4}{N}$, we cannot apply Pitt's inequality, and the condition $p'(0) \frac{N}{4} < 1$ is vital in the proof of Theorem \ref{limits}. Moreover, the assumption $p_s < 2_s^*$ is important to demonstrate that $u$ is nontrivial. The main idea of the proof is to use the expansion
	$$ (I - \Delta)^s \varphi = \varphi + s (I - \Delta)^{\ln} \varphi + o(s) \quad \text{in} \ \, L^\infty(\Omega) $$
	and the expansion of $|t|^{p_s - 2} t$.

	It is worth noting that the nontrivial solution to problem (\ref{case1}) could also be obtained via a limit process. However, the proof that this solution is the Nehari least-energy  solution relies on the prior existence result for problem (\ref{case1}).\medskip


	Another motivation for studying problems (\ref{case1}) and (\ref{fenshujie}) comes from the well-known  Brezis-Nirenberg problem. In 1983, Brezis and Nirenberg made significant progress in studying positive solutions for nonlinear elliptic equations involving the critical Sobolev exponent of the Laplace operator, which is known as the Brezis-Nirenberg problem. They considered the following critical equation:
	\begin{equation}\label{fen1}
		\left\{
		\begin{array}{ll}
			-\Delta u - \lambda u = |u|^{2^* - 2} u & \text{in} \ \ \Omega, \\[1mm]
			\qquad \qquad  u = 0 & \text{on} \ \ \mathbb{R}^N \setminus \Omega,
		\end{array}
		\right.
	\end{equation}
	where $2^* = \frac{2N}{N-2}$. In \cite{brezis1983positive} the authors proved that
	
	(1) For $n \geq 4$, the problem (\ref{fen1}) has a positive solution if $\lambda \in \left(0, \lambda_1(-\Delta)\right)$.

	(2) For $n = 3$, when $\Omega$ is a ball, the problem (\ref{fen1}) has a positive solution if and only if $\lambda \in \big(\frac{\lambda_1(-\Delta)}{4}, \lambda_1(-\Delta)\big)$, where $\lambda_1(-\Delta)$ is the first eigenvalue of the Laplacian with zero Dirichlet boundary conditions.
	
	Later, Servadei and Valdinoci studied this problem in the framework of the fractional Laplacian, extending the existence results of the Brezis-Nirenberg problem to the fractional Laplacian \cite{servadei2013variational,servadei2015brezis}, which triggered a series of subsequent research \cite{guo2021fractional, de2023critical, colorado2019brezis}. They considered the following fractional critical equation:
	\begin{equation}\label{fen2}
		\left\{
		\begin{array}{ll}
			(-\Delta)^s u - \lambda u = |u|^{2^* - 2} u & \text{in} \ \ \Omega, \\[2mm]
			\qquad\qquad \quad\,   u = 0 & \text{on} \ \ \mathbb{R}^N \setminus \Omega,
		\end{array}
		\right.
	\end{equation}
	where $2^* = \frac{2N}{N - 2s}$ and $s \in (0, 1)$. They proved that: \smallskip
	
	\noindent$(1)$ For $N \geq 4s$, if $\lambda > 0$ is not an eigenvalue of $(-\Delta)^s$ with homogeneous Dirichlet boundary data, then problem (\ref{fen2}) admits a nontrivial weak solution.\smallskip
	
\noindent$(2)$  For $2s < N < 4s$, there exists $\lambda_s > 0$ such that for any $\lambda > \lambda_s$ (different from  eigenvalues of $(-\Delta)^s$), problem (\ref{fen2}) admits a nontrivial weak solution.\smallskip 
	
	
	While there are relatively few results for the fractional pseudo-relativistic Schr\"odinger operator $(I - \Delta)^s$, this work provides the first existence result for least-energy solutions to the associated Brezis-Nirenberg problem (\ref{fenshujie})  with subcritical and critical exponents. 	A key difference from the fractional Laplacian case is that the best Sobolev constant for this operator is not attained, as shown by \cite[Theorem 2]{Bueno2022Poho}. The existence of an extremal function for fractional Laplacian is a crucial element in analysis of its critical equation \cite{servadei2015brezis}.
	
	Lastly, we provide some comments to analyze subtle problems in this paper.
	
	\begin{rmk}
		$(i)$ The nonlinearity term $u \ln|u|$ does not belong to $L^p(\Omega)$, but Pitt's inequality compensates for this shortcoming, which is very important in many aspects, such as ensuring that $J_{\ln} \in C^1$ (Lemma \ref{kwx}) and proving that $J_{\ln}$ satisfies the Palais-Smale condition at level $c$ (Proposition \ref{ps}).
		
		$(ii)$ Proving that $J_{\ln} \in C^1$ requires a more technical analysis. We include the details for the reader’s
		convenience (Lemma \ref{slx1} and Lemma \ref{ryd}).
		
		$(iii)$ Note that $t^2 \ln t^2$ is bounded in $(0, M)$, where $M$ is a positive constant, and $\displaystyle \lim_{t \to 0} \ln t = -\infty$. These properties are important in results such as Lemma \ref{zx} and Lemma \ref{mp}.
		
		$(iv)$ If we take the nonlinearity term $u \ln(1 + u^2)$, although $t \ln(1 + t)$ is bounded in $(0, M)$ where $M$ is a positive constant, $\displaystyle \lim_{t \to 0^+} \ln(1 + t) = 0 \neq -\infty$, so many of the conclusions here no longer hold true (such as Lemma \ref{mp}).
	\end{rmk}
	
	The paper is organized as follows. In Section 2, we establish some preliminaries. We begin by introducing essential definitions and notations, followed by a crucial Pitt's inequality and convergence properties to demonstrate that $J_{\ln} \in C^1$. Additionally, we present the Brezis-Lieb type lemma and derive uniform bounds for elements in the Nehari manifold and least-energy solutions, which play a significant role in the subsequent proof. In Section 3, we establish the Mountain-Pass structure for problems (\ref{fenshujie}) and (\ref{case1}), the Palais-Smale condition for (\ref{case1}), the geometry of the functional $J_{\omega,s}$, and then we prove Theorem \ref{feshujiedl}, Theorem \ref{hx} and Proposition \ref{youjiexingd}. In Section 4, we prove the main result, Theorem \ref{limits}.

	\section{Preliminaries}
	In this section, we introduce the necessary definitions, notations and results that will be used in the subsequent discussion.
	
	\subsection{Definitions and Notations}
	
	Let $\Omega \subset \mathbb{R}^N$ be a bounded open set with Lipschitz boundary, and let $\|\cdot\|_{p}$ denote the $L^p$ norm, where $1 \le p \le \infty$.
	 In order to settle the corresponding functional analytic framework to consider the existence of solution in problem (\ref{fenshujie}), we firstly introduce (see \cite{SG,Stein})
	\[H _{\omega}^ { s } ( \mathbb { R } ^ { N })= \left\{ u \in L ^ { 2 } ( \mathbb { R } ^ { N })\!: \int _ { \mathbb { R } ^ { N } } \int _ { \mathbb { R } ^ { N } } \frac { | u ( x ) - u ( y ) | ^ { 2 } } { | x - y | ^ { N + 2 s } } \omega _ { s } ( | x - y | ) d x d y < +\infty \right\}\] 
	with corresponding norm given by 
	\[\begin{aligned}
		\| u \| _ { H _{\omega}^ { s }  ( \mathbb { R } ^ { N }  ) } =& \Big( \| u \| _ { L ^ { 2 } ( \mathbb { R } ^ { N })} ^ { 2 } + \frac{d_{N,s}}{2}\int _ { \mathbb { R } ^ { N } } \int _ { \mathbb { R } ^ { N } } \frac { | u ( x ) - u ( y ) | ^ { 2 } } { | x - y | ^ { N + 2 s } } \omega _ { s } ( | x - y | ) d x d y \Big) ^ { \frac { 1 } { 2 } }\\
		=& \Big(  \int _ { \mathbb { R } ^ { N } } \left( 1 + | \xi | ^ { 2 } \right) ^ { s } | \mathcal { F } ( u ) ( \xi ) | ^ { 2 } d \xi \Big) ^ { \frac { 1 } { 2 } },
	\end{aligned}\] 
	where the function $\omega_s$ is given by (\ref{ws}).
	
	The natural Hilbert space associated to problem (\ref{fenshujie}) is  
	\begin{equation}\label{fenskj}
		\mathcal { H } _ { \omega } ^ { s } ( \Omega ) : = \big\{ u \in H_{\omega} ^ { s } ( \mathbb { R } ^ { N })\!: u = 0 \quad\text{in}\:\:\: \mathbb { R } ^ { N } \backslash \Omega  \big\}.
	\end{equation} 
	We say that $u  \in \mathcal { H } _ { \omega } ^ { s } ( \Omega )$ is a weak solution of problem (\ref{fenshujie}) if 
	\[\mathcal { E } _ { \omega,s } \left( u , \varphi \right) = \int _ { \Omega } \left| u  \right| ^ { p _ { s } - 2 } u  \varphi d x+\tau_s \int_{\Omega}u\varphi dx \quad \text{for all} \:\varphi \in \mathcal { H } _ { \omega } ^ { s } ( \Omega ), \]
	where 
	\[\mathcal { E } _ { \omega , s } ( u , \varphi ) := \int _ { \mathbb { R } ^ { N } } \left( 1 + | \xi | ^ { 2 } \right) ^ { s } \mathcal { F } ( \varphi ) ( \xi ) \overline{\mathcal { F } ( u) }( \xi ) d \xi \] 
	is a scalar product in Hilbert space $\mathcal { H } _ { \omega} ^ { s } ( \Omega )$ with norm $\| u \| _ { \omega,s } : = \sqrt{\mathcal { E } _{\omega,s}( u , u )   } $. \smallskip
	
	Note that for the fractional Laplacian we have $\mathcal{E}_{s}\left(u,u\right)=\int _ { \mathbb { R } ^ { N } }|\xi| ^ { 2s } |\mathcal { F } ( u ) ( \xi )|^2 d \xi,$ so in fact $\mathcal{H}_{\omega}^s (\Omega )=\mathcal{H}_0^s(\Omega ),$ where
	\[\mathcal { H } _ { 0} ^ { s } ( \Omega ) : = \left\{ u \in H ^ { s } ( \mathbb { R } ^ { N })\!: u = 0\quad \text{in}\:\: \mathbb { R } ^ { N } \backslash \Omega \right\}\]
	and $H^s(\mathbb{ R }^N )$ is the usual fractional Sobolev space.
	
	The energy functional associated to problem (\ref{fenshujie}) is given by 
	\begin{equation}\label{nengliang1}
		J_{\omega,s}:\mathcal{H}_{\omega}^{s}(\Omega )\rightarrow \mathbb{ R },\:J_{\omega,s}(u): = \frac{1}{2}\|u\|_{\omega,s}^{2}-\frac{\|u\|_{p_s}^{p_s}}{p_s}-\frac{1}{2}\tau_s \|u\|_2^2.
	\end{equation}
	It is easy to calculate that for $\varphi \in \mathcal{H}_{\omega}^{s}(\Omega ),$
	\[	\left \langle J_{\omega,s}^{\prime} (u),\varphi \right \rangle =\mathcal{E}_{\omega,s}(u,\varphi)-\int_{\Omega}|u|^{p_s-2}u\varphi dx-\tau_s \int_{\Omega} u \varphi dx.\]
	Note that all nontrivial solutions of problem (\ref{fenshujie}) belong to the set 
	\begin{equation}\label{nehari1}
		\mathcal { N } _ { \omega,s } : = \left\{ u \in \mathcal { H } _ { \omega} ^ { s } ( \Omega ) \backslash \{ 0 \} : \| u \| _ { \omega,s } ^ { 2 } = \|u\|_{ p _ { s }} ^ { p _ { s } }+\tau_s\|u\|_2^2 \right\}\!.
	\end{equation}
	We call  solution $u \in \mathcal { N } _ { \omega,s }$ is a Nehari least-energy solution of (\ref{fenshujie}) if  
	\[J _ { \omega,s } ( u ) = \inf _ { v \in \mathcal { N } _ {\omega, s } } J _ { \omega,s } ( v ) .\] 
	
	Next, we introduce the following space to consider the existence of solutions to problem (\ref{case1}):
	\[H^{\ln}(\mathbb{ R }^N )=\left\{u\in L^2(\mathbb{ R }^N )\!:\: \mathcal{ E }_{\omega}\left(u,u\right)<\infty\right\}\]
	and the bilinear form considered here is given by 
\begin{equation}\label{shuang}
	\mathcal{E}_\omega(u,v)=\frac12\int_{\mathbb{R}^N}\int_{\mathbb{R}^N}(u(x)-u(y))(v(x)-v(y))J(x-y)dxdy\ \ \, {\rm for}\ \, u,v\in H^{\ln} (\mathbb{ R }^N),
\end{equation}
where  $J$ is defined  in (\ref{J}).  According to \cite[Lemma 2.3]{feulefack2023logarithmic}, $H^{\ln}(\mathbb{ R }^N )$ is a Hilbert space endowed with the scalar product
	$$(u,v)\to\langle u,v\rangle_{H^{\ln}(\mathbb{R}^N)}\!:=\langle u,v\rangle_{L^2(\mathbb{R}^N)}+\mathcal{E}_\omega(u,u),$$
	where $\langle u,v\rangle_{L^2(\mathbb{R}^N)}=\int_{\mathbb{R}^N}u(x)v(x)$ $dx$ with the corresponding norm
	$$\|u\|_{H^{\ln}(\mathbb{R}^N)}=\big(\|u\|_{L^2(\mathbb{R}^N)}^2+\mathcal{E}_\omega(u,u)\big)^{\frac12}.$$
	
	Here and the following we identify the space $L^p(\Omega)$ with the space of functions $u\in L^p(\mathbb{R}^N)$ with $u\equiv0$ on $\mathbb{R}^N\setminus\Omega.$ We denote by $\mathcal{H}_0^{\ln}(\Omega)$ the completion of $C_c^\infty(\Omega)$ with respect to the norm $\|\cdot\|_{H^{\ln}(\mathbb{R}^N)}.$ By \cite[Lemma 2.3]{feulefack2023logarithmic}, we have for bounded $\Omega$ with Lipschitz boundary that the space $\mathcal{H}_0^{\ln}(\Omega)$ can be identified by
\begin{equation}\label{duishukj}
	\mathcal{H}_0^{\ln}(\Omega)=\big\{u\in H^{\ln}(\mathbb{R}^N)\!: u\equiv0\quad\mathrm{in}\ \,\mathbb{R}^N\setminus\Omega\big\}
\end{equation}
	and it is a Hilbert space endowed with the scalar product $\mathcal{ E }_{\omega}\left(v,w\right)$ and the corresponding norm $\|u\|:=\sqrt{\mathcal{ E }_{\omega}\left(u,u\right)}.$
	
	{\it By \cite[Lemma 2.3]{feulefack2023logarithmic}, the embedding $\mathcal{H}_0^{\ln}(\Omega)\hookrightarrow L^2(\Omega)$ is compact. }\smallskip
	
	According to \cite[Theorem 1.1]{feulefack2023logarithmic}, it holds that 
	\[	\mathcal{E}_{\omega}(u,u)=\int_{\mathbb{R}^N}\ln\left(1+|\xi|^2\right)|\widehat{u}(\xi)|^2\:d\xi\quad\text{for all }u\in C_c^\infty(\Omega),\]
	where $\widehat{u}$ denotes the Fourier transform of $u$ given by
	\[\widehat{u}(\xi)=\frac1{(2\pi)^{\frac N2}}\int_{\mathbb{R}^N}e^{-ix\cdot\xi}u(x)\:dx,\quad\forall\,\xi\in\mathbb{R}^N.\]
	Moreover, for $\varphi \in C_c^\infty(\Omega),$ we have that $\left(I-\Delta\right)^{\ln}\varphi \in L^p \left(\mathbb{R}^N\right)$ and
	\[	\mathcal{E}_{\omega}(u,\varphi)=\int_{\Omega}u 	\left(I-\Delta\right)^{\ln} \varphi  dx\ \ {\rm for}\ \, u\in \mathcal{H}_0^{\ln}(\Omega).\]
	Hence, we say that $u \in \mathcal{H}_0^{\ln}(\Omega)$ is a weak solution of (\ref{case1}) if
	\begin{equation}\label{wj}
		\mathcal{E}_{\omega}(u,\varphi)=\lambda \int_{\Omega}\varphi udx+k \int_{\Omega}\varphi u \ln |u|dx,\: \ \forall \varphi \in \mathcal{H}_0^{\ln}(\Omega).
	\end{equation}
	The following Lemma \ref{dyld} ensures (\ref{wj}) is well-defined.
	
	The energy functional associated to (\ref{case1}) is given by $J_{\ln}:\mathcal{H}_0^{\ln}(\Omega)\rightarrow \mathbb{R}$, 
	\begin{equation}\label{nengliang2}
		J_{\ln}(u)=\frac{1}{2}\mathcal{E}_{\omega}(u,u)-\frac{\lambda}{2} \int_{\Omega}u^{2}dx+\frac{k}{4}\int_{\Omega}u^{2} dx-\frac{k}{4}\int_{\Omega}u^{2}\ln u^2dx.
	\end{equation}
	By Lemma \ref{dyld}, we know that $J_{\ln}$ is well defined in $\mathcal{H}_0^{\ln}(\Omega).$ Moreover, we show in Lemma \ref{kwx} that $J_{\ln}$ is of class $C^1$ in $\mathcal{H}_0^{\ln}(\Omega).$
	
	All nontrivial solutions of (\ref{case1}) belong to the set
	\begin{equation}\label{nehari2}
		\mathcal{N}\!:=\Big\{u\in\mathcal{H}_0^{\ln}(\Omega)\backslash\{0\}\!: \mathcal{E}_\omega(u,u)=\lambda \int_{\Omega}u^2 dx+k \int_{\Omega}u^2\ln|u|dx\Big\}.
	\end{equation}
	A solution $u\in \mathcal{N}$ is a Nehari or global least-energy solution of (\ref{case1}) if 
	\[	J_{\ln}(u)=\inf_{v\in\mathcal{N}}J_{\ln}(v) \ \ \text{or} \ \	J_{\ln}(u)=\inf_{v\in \mathcal{H}_0^{\ln}(\Omega)}J_{\ln}(v) .\]

	Lastly, we introduce the eigenvalue of operators. Recall the first Dirichlet eigenvalue of $( I - \Delta ) ^ { s }$ in $\Omega$ by \cite{feulefack2023logarithmic}
	\begin{equation}\label{diyite}
		\lambda _{1,s}^{\omega}(\Omega ) = \inf _{u \in C_{c}^{2}(\Omega )\setminus \left\{0\right\}}\frac{\mathcal{ E }_{\omega ,s}\left(u,u\right)}{\|u\|_{L^{2}(\Omega )}^2} = \inf_{\substack{u \in C_{c}^{2}(\Omega ) \\ \|u\|_{L^{2}(\Omega )} = 1}}\mathcal{ E }_{\omega ,s}\left(u,u\right)>0.
	\end{equation}
	Noticing that $\left( 1 + | \xi | ^ { 2 } \right) ^ { s } \geq | \xi | ^ { 2 s }$ for $s \in ( 0 , 1 )$ and $\xi \in \mathbb { R } ^ { N } $, we have via the Fourier transform of the functional $\mathcal { E } _ { \omega , s } ( \cdot , \cdot )$ for $( I - \Delta ) ^ { s }$ and $\mathcal { E } _ { s } ( \cdot , \cdot )$ for the fractional Laplacian $( - \Delta ) ^ { s }$ that 
	\[\begin{aligned} \lambda _ { k , s } ^{\omega}( \Omega ) = \mathcal { E } _ { \omega , s } \left( \psi _ { k , s } , \psi _ { k , s } \right) \geq \mathcal { E } _ { s } \left( \psi _ { k , s } , \psi _ { k , s } \right) \geq \inf_{\substack{v \in C_{c}^{2}(\Omega ) \\ \|v\|_{L^{2}(\Omega )} = 1}}\mathcal{ E }_{s}\left(v,v\right) = \lambda _ { 1 , s }  ( \Omega ) , \end{aligned}\]
	where $\psi _ { k , s }$ is a $L ^ { 2 }$ -normalized eigenfunction of $( I - \Delta ) ^ { s }$ corresponding to 
	$ \lambda _ { k , s } ^{\omega}( \Omega )$ and $\lambda _ { 1 , s }  ( \Omega )$ is the first Dirichlet eigenvalue  of the fractional Laplacian $( - \Delta ) ^ { s }$.
	By \cite[Lemma 4.3]{feulefack2023logarithmic} we have 
	\begin{equation}\label{diyige}
		\lim\limits_{s\rightarrow 0^+}\lambda_{k,s}^{\omega}=1,\ \, \forall k \in \mathbb{ N }.
	\end{equation} 
	
	In fact, $\lambda_{k,s}^\omega\ge \lambda_{1,s}^\omega>1,s\in \left(0,1\right).$ According to (\ref{diyite}), this can be obtained from the following formula 
	\begin{equation}\label{1da}
		\lambda_{1,s}^\omega=1+\frac{d_{N,s}}{2}\inf _{u \in \mathcal{ H }_{\omega}^s(\Omega )\setminus\left\{0\right\}} \frac{\int _ { \mathbb { R } ^ { N } } \int _ { \mathbb { R } ^ { N } } \frac { ( u ( x ) - u ( y ) ) ^ { 2 } } { | x - y | ^ { N + 2 s } }\omega _ { s } ( | x - y | ) d x d y  }{\| u \| _ { L ^ { 2 } ( \Omega ) } ^ { 2 }} > 1. 
	\end{equation}

	\subsection{Fractional Schr\"odinger Sobolev Inequality and Pitt's Inequality}
	
	Firstly, we derive the sharp fractional pseudo-relativistic Schr\"odinger Sobolev inequality by utilizing the fractional Sobolev inequality.
	
	\begin{theorem}\label{sobolev}
		Let $N\geq1,s\in(0,\frac N2)$ and  $2_s^*:=\frac{2N}{N-2s}.$ Then
		
		$$\|u\|_{2_s^*}^2\leq\kappa_{N,s}\|u\|_{\omega,s}^2 \quad\text{for all $u$}\in H_{\omega}^s(\mathbb{R}^N),$$
		where
		\begin{equation}\label{kns}
			\kappa_{N,s}=2^{-2s}\pi^{-s}\frac{\Gamma(\frac{N-2s}2)}{\Gamma(\frac{N+2s}2)}\Big(\frac{\Gamma(N)}{\Gamma(\frac N2)}\Big)^{\frac{2s}N}.
		\end{equation}
	\end{theorem}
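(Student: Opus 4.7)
The plan is to reduce the inequality to the classical sharp fractional Sobolev inequality for the homogeneous operator $(-\Delta)^{s/2}$ and then exploit the pointwise comparison $(1+|\xi|^2)^s \ge |\xi|^{2s}$ on the Fourier side. The proof is essentially a two-line reduction once the sharp constant is identified.

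First, I would invoke the sharp fractional Sobolev inequality of Cotsiolis--Tavoularis / Lieb: for every $u \in \dot{H}^s(\mathbb{R}^N)$ with $s \in (0, N/2)$,
\[
\|u\|_{2_s^*}^2 \;\le\; \kappa_{N,s} \int_{\mathbb{R}^N} |\xi|^{2s} |\widehat{u}(\xi)|^2\, d\xi,
\]
where the sharp constant $\kappa_{N,s}$ is precisely the one given by (\ref{kns}). This constant and its extremals are classical; see, e.g., the references cited in the Introduction for the fractional Brezis--Nirenberg problem. The right-hand side equals $\|(-\Delta)^{s/2} u\|_{L^2(\mathbb{R}^N)}^2$ via the Plancherel identity.

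Second, I would use the elementary pointwise bound
\[
(1+|\xi|^2)^s \;\ge\; |\xi|^{2s}, \qquad \xi \in \mathbb{R}^N,\; s > 0,
\]
which, after multiplying by $|\widehat{u}(\xi)|^2$ and integrating, yields
\[
\int_{\mathbb{R}^N} |\xi|^{2s} |\widehat{u}(\xi)|^2\, d\xi \;\le\; \int_{\mathbb{R}^N} (1+|\xi|^2)^s |\widehat{u}(\xi)|^2\, d\xi \;=\; \|u\|_{\omega,s}^2,
\]
by the Fourier representation of $\mathcal{E}_{\omega,s}(u,u)$ recalled in the definition of the space $H_\omega^s(\mathbb{R}^N)$. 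Chaining the two inequalities gives $\|u\|_{2_s^*}^2 \le \kappa_{N,s} \|u\|_{\omega,s}^2$, as desired.

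There is no genuine obstacle here beyond the bookkeeping of the sharp constant: one only needs to verify that the explicit $\kappa_{N,s}$ in (\ref{kns}) matches the standard expression for the sharp fractional Sobolev constant (which it does, up to the conventional normalization of $(-\Delta)^{s/2}$ via the Fourier multiplier $|\xi|^{2s}$). It is worth remarking, following \cite{Bueno2022Poho}, that $\kappa_{N,s}$ is \emph{not} attained in $H_\omega^s(\mathbb{R}^N)$, because the equality in $(1+|\xi|^2)^s \ge |\xi|^{2s}$ fails on a set of positive Lebesgue measure whenever $\widehat{u} \not\equiv 0$; this observation plays a crucial role later when analyzing the critical case of problem (\ref{fenshujie}).
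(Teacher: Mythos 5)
Your proof is correct, and its first (and main) step coincides with the paper's: both invoke the sharp fractional Sobolev inequality of Cotsiolis--Tavoularis for the homogeneous quotient $\int_{\mathbb{R}^N}|\xi|^{2s}|\widehat{u}(\xi)|^2\,d\xi$ with exactly the constant $\kappa_{N,s}$ of (\ref{kns}). You differ only in how you pass from the homogeneous seminorm to $\|u\|_{\omega,s}^2$: you use the elementary pointwise multiplier comparison $(1+|\xi|^2)^s\ge|\xi|^{2s}$ and Plancherel, whereas the paper instead cites \cite[Section 5]{Bueno2022Poho} for the equality of the two Sobolev infima
\[
\inf_{u\ne 0}\frac{\int_{\mathbb{R}^N}|\xi|^{2s}|\widehat{u}|^2\,d\xi}{\|u\|_{2_s^*}^2}
=\inf_{u\ne 0}\frac{\int_{\mathbb{R}^N}(1+|\xi|^2)^{s}|\widehat{u}|^2\,d\xi}{\|u\|_{2_s^*}^2}.
\]
Your route is more self-contained and suffices for the inequality as stated; the paper's citation buys the additional information that $\kappa_{N,s}$ is also the \emph{best} constant for the inhomogeneous norm (which is what makes the non-attainment statement meaningful). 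One small caveat on your closing remark: the strictness of $(1+|\xi|^2)^s>|\xi|^{2s}$ for $\widehat{u}\not\equiv 0$ shows non-attainment only \emph{after} one knows that the optimal constant over $H_\omega^s(\mathbb{R}^N)$ is still $\kappa_{N,s}$; that equality of infima does not follow from the pointwise comparison alone (which goes the wrong way) and genuinely requires the concentration/scaling argument of \cite{Bueno2022Poho}. Since this remark is ancillary to the stated inequality, it does not affect the validity of your proof.
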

	
	\begin{proof}
		By \cite[Theorem 1.1]{cotsiolis2004best}, we obtain that $\|u\|_{2_s^*}^2\leq\kappa_{N,s}\|u\|_{s}^2\: \text{for all $u$} \in H^s(\mathbb{R}^N).$ By \cite[Section 5]{Bueno2022Poho} we see that 
		\[\inf_{u\in H^s(\mathbb{ R }^N )\setminus \left\{0\right\}}\frac{\int_{\mathbb{ R }^N}|\xi|^{2s}||\widehat{u}(\xi)|^2d\xi}{\left(\int_{\mathbb{ R }^N}|u|^{2_s^*}dx\right)^\frac{2}{2_s^*}}=\inf_{u\in H_\omega^s(\mathbb{ R }^N )\setminus \left\{0\right\}}\frac{\int_{\mathbb{ R }^N}\left(1+|\xi|^2\right)^{s}|\widehat{u}(\xi)|^2d\xi}{\left(\int_{\mathbb{ R }^N}|u|^{2_s^*}dx\right)^\frac{2}{2_s^*}},\]
		so the desired inequality holds.
	\end{proof}

	Observe that the best Sobolev constant $\kappa _ { N , s }$ is well-defined as $s \rightarrow 0 ^ { + } $ and
	\begin{equation}\label{knsdedaxiao}
		\begin{aligned}
			\lim _ { s \rightarrow 0 ^ { + } } \kappa _ { N , s } = 1,\ \ 	\lim \limits_{s\rightarrow 0^{ + }}\kappa _{N,s}^{\frac{1}{s}} =\frac{1}{4\pi}\Big(\frac{\Gamma\left(N\right)}{\Gamma \left(\frac{N}{2}\right)}\Big)^{\frac{2}{N}}e^{-2\psi \left(\frac{N}{2}\right)}, 
		\end{aligned}
	\end{equation}
	where $\psi = \frac { \Gamma ^ { \prime } } { \Gamma }$ is the digamma function.

	Next, we must mention a crucial inequality, known as Pitt's inequality \cite{beckner1995pitt}, which was first proposed by Beckner in 1995. In that work, Pitt's inequality was proved for the Schwarz function. However, Alberto Saldana, in 2022, extended this result to the space $\mathbb{H}(\Omega)$ \cite[Proposition 2.8]{hernandez2022small}. In fact, this inequality also holds for $\mathcal{H}_0^{\ln}(\Omega)$, as shown in Proposition \ref{pit}.
	
	\begin{prop}\label{pit}
		For every $u \in \mathcal{H}_0^{\ln}(\Omega),$ we have
		\begin{equation}\label{pitt}
			\frac { 4 } { N } \int _ { \Omega } u ^ { 2 }\ln |u|  d x \leq \mathcal { E } _ { \omega} ( u , u ) + \frac { 4 } { N }   \|u\| _ { 2 } ^ { 2 }  \ln \|u\|_2 + a _ { N }  \|u\| _ { 2 } ^ { 2 } ,
		\end{equation}
		where 
		\[a _ { N } : = \frac { 2 } { N } \ln \Big( \frac { \Gamma ( N ) } { \Gamma  ( \frac { N } { 2 }  ) } \Big) - \ln ( 4 \pi ) - 2 \psi \big( \frac { N } { 2 } \big)\]
		and $\psi$ is the digamma function. 
	\end{prop}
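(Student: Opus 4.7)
The plan is to adapt Beckner's classical strategy: differentiate the fractional Schrödinger Sobolev inequality from Theorem \ref{sobolev} at $s = 0^+$. It suffices to treat $u \in C_c^\infty(\Omega)$ with $\|u\|_2 = 1$; rescaling $v = u/\|u\|_2$ recovers the general normalization in $\|u\|_2$, and density of $C_c^\infty(\Omega)$ in $\mathcal{H}_0^{\ln}(\Omega)$ together with a Fatou step then extends the inequality to all of $\mathcal{H}_0^{\ln}(\Omega)$.

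Fix $u \in C_c^\infty(\Omega)$ with $\|u\|_2 = 1$ and, for $s \in [0, N/2)$, set
\[
F(s) := \|u\|_{2_s^*}^2, \qquad G(s) := \kappa_{N,s} \int_{\mathbb{R}^N} (1+|\xi|^2)^s |\widehat{u}(\xi)|^2 \, d\xi.
\]
Theorem \ref{sobolev} gives $F(s) \leq G(s)$, and $\kappa_{N,0} = 1$ from (\ref{knsdedaxiao}) together with $2_0^* = 2$ yields $F(0) = G(0) = 1$; hence $F'(0^+) \leq G'(0^+)$ provided both one-sided derivatives exist. Logarithmic differentiation of $F$ with $p_s = 2N/(N-2s)$ and $p'(0) = 4/N$ (the $\ln\|u\|_{p_s}$ term vanishes at $s = 0$ because $\|u\|_2 = 1$) gives $F'(0) = \tfrac{4}{N}\int_\Omega u^2 \ln|u|\,dx$. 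For $G$, differentiation under the integral (justified by Schwartz decay of $\widehat{u}$) produces
\[
\frac{d}{ds}\bigg|_{s=0} \int_{\mathbb{R}^N} (1+|\xi|^2)^s |\widehat{u}|^2\,d\xi = \int_{\mathbb{R}^N} \ln(1+|\xi|^2) |\widehat{u}|^2\,d\xi = \mathcal{E}_\omega(u,u),
\]
and since $\kappa_{N,0} = 1$, the second limit in (\ref{knsdedaxiao}) forces
\[
\kappa'_{N,0} = \lim_{s \to 0^+}\frac{\ln\kappa_{N,s}}{s} = \ln\!\left(\frac{1}{4\pi}\left(\frac{\Gamma(N)}{\Gamma(N/2)}\right)^{2/N} e^{-2\psi(N/2)}\right) = a_N.
\]
Hence $G'(0^+) = \mathcal{E}_\omega(u,u) + a_N$, and $F'(0^+) \leq G'(0^+)$ is exactly the normalized Pitt inequality.

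For general $u \in \mathcal{H}_0^{\ln}(\Omega)$, pick $u_n \in C_c^\infty(\Omega)$ with $u_n \to u$ in $\mathcal{H}_0^{\ln}$-norm; by continuity of the embedding $\mathcal{H}_0^{\ln}(\Omega) \hookrightarrow L^2(\Omega)$ and passage to a subsequence, $u_n \to u$ in $L^2$ and pointwise a.e. The right-hand side of the inequality written for $u_n$ converges to the right-hand side for $u$. The pointwise bound $t^2 \ln|t| \geq -1/(2e)$ on $\mathbb{R}$ makes $u_n^2 \ln|u_n| + 1/(2e) \geq 0$, so Fatou's lemma yields $\int u^2 \ln|u|\,dx \leq \liminf_n \int u_n^2 \ln|u_n|\,dx$, completing the passage to the limit.

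The main obstacle is justifying the differentiation of $G$ at $s = 0^+$: the factor $(1+|\xi|^2)^s$ is unbounded in $\xi$ for each fixed $s$, so dominated convergence applied to the difference quotient $\frac{(1+|\xi|^2)^s - 1}{s}|\widehat{u}(\xi)|^2$ requires the uniform bound $\frac{(1+|\xi|^2)^s - 1}{s} \leq (1+|\xi|^2)^{s_0} \ln(1+|\xi|^2)$ for $0 < s \leq s_0 < 1$, with the dominating function integrable against $|\widehat{u}|^2$ thanks to Schwartz decay. A secondary subtlety is that the Fatou step in the density argument produces only a $\liminf$-inequality for the left-hand side, but this is precisely the direction compatible with the form of Pitt's inequality.
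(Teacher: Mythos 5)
Your proof is correct and follows essentially the same route as the paper: differentiating the sharp Sobolev inequality of Theorem \ref{sobolev} at $s=0^+$, using $\kappa_{N,s}=1+sa_N+o(s)$ and $\|u\|_{\omega,s}^2=\|u\|_2^2+s\,\mathcal{E}_\omega(u,u)+o(s)$, then extending by density. The only cosmetic differences are the normalization $\|u\|_2=1$ and your use of Fatou with the pointwise bound $t^2\ln|t|\ge -1/(2e)$ in the density step, where the paper instead invokes its Lemma \ref{smnd}; both are valid.
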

	\begin{proof}
	Let $u\in C_c^{\infty}\left(\mathbb{ R }^N\right)\setminus\left(0\right)$ and $f\left(s\right)=||u||_{2_s^*}^2$, it is easy to see that $f\left(0\right)=||u||_2^2$ and 
	\[\begin{aligned}
		f^{\prime}\left(0\right)=&\frac{d}{ds}\Big|_{s=0}\exp\left\{\frac{2}{2_s^*}\ln \int_{\mathbb{ R }^N}|u|^{2_s^*}dx\right\}\\
		=&||u||_2^2\left\{-\frac{4}{N}\ln ||u||_2+\frac{4}{N}||u||_2^{-2}\int_{\mathbb{R}^N }u^2\ln |u|dx\right\}\\
		=&\frac{4}{N}\left( \int _ { \mathbb{R}^N} u ^ { 2 }\ln |u|  d x - || u || _ { 2 } ^ { 2 }  \ln ||u||_2\right).
	\end{aligned}\] 
	By (\ref{knsdedaxiao}) we deduce that $\kappa _{N,s}=1+sa_{N}+o\left(s\right)$ as $s\rightarrow 0^+$ where
	\[\begin{aligned}
		a_N=&\lim\limits_{s\rightarrow 0^+}\frac{\kappa_{ N , s }-1}{s}=\lim\limits_{s\rightarrow 0^+} \frac{d}{ds}\kappa_{ N , s }\\
		=&\frac { 2 } { N } \ln \left( \frac { \Gamma ( N ) } { \Gamma \left( \frac { N } { 2 } \right) } \right) - \ln ( 4 \pi ) - 2 \psi \left( \frac { N } { 2 } \right).
	\end{aligned}\]
	Furthermore, by Theorem \ref{sobolev} and Lemma \ref{bj}, we have that 
	\begin{align*}
		\|u\|_{2_s^*}^2&=\|u\|_2^2+sf'(0)+o(s )
		\\[1mm]&=\|u\|_2^2+s\frac{4}{N}\Big( \int _ { \mathbb{R}^N } u ^ { 2 }\ln |u|  d x -  \|u\| _ { 2 } ^ { 2 }  \ln \|u\|_2\Big)+o(s)
		\\[1mm]&\leq \left(1+sa_N\right)\left\{\|u\|_2^2+s\mathcal{ E }_{\omega}\left(u,u\right)\right\}+o(s)\quad \text{as $s\rightarrow 0^+$, }
	\end{align*}
	which yields that
	\begin{equation}\label{fatous}
		\frac{4}{N}\Big( \int _ { \mathbb{R}^N } u ^ { 2 }\ln |u|  d x -  \|u\| _ { 2 } ^ { 2 }  \ln \|u\|_2\Big)\le  \mathcal { E } _ { \omega} ( u , u ) + a _ { N }  \|u\| _ { 2 } ^ { 2 }.
	\end{equation}
	For $u\in \mathcal{ H }_0^{\ln}\left(\Omega\right),$ there exists $\left\{u_n\right\}\subset C_c^{\infty}\left(\Omega\right)$ such that $u_n \rightarrow u$ in $\mathcal{ H }_0^{\ln}\left(\Omega\right).$ By \cite[Lemma 2.3]{feulefack2023logarithmic}, $u_n\rightarrow u$ in $L^2\left(\Omega\right).$ Therefore, it sufficies to show that, up to a subsequence
	\[\int _ { \Omega } u ^ { 2 }\ln |u|  d x \le \lim\limits_{n\rightarrow \infty}\int _ { \mathbb{R}^N } u_n ^ { 2 }\ln |u_n|  d x,\]
	which follows by combining (\ref{fatous}) with Fatou’s lemma.
\end{proof}

	\subsection{Convergence Properties}
	To guarantee (\ref{wj}) is well-defined, we give the following lemma.
	
	\begin{lemma}\label{dyld}
		For $u,\varphi \in \mathcal{H}_0^{\ln}(\Omega),\ \varphi u \ln|u|\in L^{1}(\Omega ).$
	\end{lemma}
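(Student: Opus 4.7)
My plan is to split $\Omega$ into the regions $\{|u|\le 1\}$ and $\{|u|>1\}$, and handle each by combining elementary inequalities with Pitt's inequality (Proposition \ref{pit}). On the small region $\{|u|\le 1\}$, the function $t\mapsto t\ln|t|$ is uniformly bounded (with $\sup_{t\in[0,1]}t|\ln t|=e^{-1}$), so $|\varphi u\ln|u||\le e^{-1}|\varphi|$, and since $\Omega$ is bounded with $\varphi\in L^2(\Omega)$, this contribution is bounded by $e^{-1}|\Omega|^{1/2}\|\varphi\|_{L^2}<\infty$. On $\{|u|>1\}$, where $\ln|u|>0$, I would apply AM--GM in the form $xy\le \tfrac12(x^2+y^2)$ to $x=|\varphi|\sqrt{\ln|u|}$ and $y=|u|\sqrt{\ln|u|}$, yielding
\[|\varphi u|\ln|u|\le \tfrac12\varphi^2\ln|u|+\tfrac12 u^2\ln|u|,\]
so the task reduces to showing finiteness of $\int_{\{|u|>1\}}u^2\ln|u|\,dx$ and $\int_{\{|u|>1\}}\varphi^2\ln|u|\,dx$.

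The first of these is immediate: Pitt's inequality bounds $\int_\Omega u^2\ln|u|\,dx$ from above, and on $\{|u|\le 1\}$ the quantity $u^2|\ln|u||$ is pointwise bounded by $1/(2e)$, so the negative part is controlled by $|\Omega|/(2e)$ and one obtains the positive-part bound $\int_{\{|u|>1\}}u^2\ln|u|\,dx<\infty$ by subtraction.

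The main obstacle is the second integral $\int_{\{|u|>1\}}\varphi^2\ln|u|\,dx$, because $\varphi$ and $u$ are unrelated and we have no higher-than-$L^2$ integrability beyond what Pitt's yields. My approach is to invoke the Fenchel--Young inequality for the convex pair $a\mapsto a\ln a-a$ and $b\mapsto e^b$, i.e.\ $ab\le a\ln a-a+e^b$ for $a\ge 0$, $b\in\mathbb{R}$, applied with $a=\varphi^2$ and $b=\ln|u|$ to get
\[\varphi^2\ln|u|\le 2\varphi^2\ln|\varphi|-\varphi^2+|u|.\]
Integrating over $\{|u|>1\}$ and estimating term by term, the first term is controlled by $2\int_\Omega(\varphi^2\ln|\varphi|)_+\,dx$, which is finite by Pitt's inequality applied to $\varphi$ together with the uniform bound $(\varphi^2\ln|\varphi|)_-\le 1/(2e)$ on $\{|\varphi|\le 1\}$; the second is bounded by $\|\varphi\|_{L^2}^2$; and the third by $|\Omega|^{1/2}\|u\|_{L^2}$. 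Collecting the estimates from all three regions completes the proof that $\varphi u\ln|u|\in L^1(\Omega)$.
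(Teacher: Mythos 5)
Your proof is correct, and it reaches the same two building blocks as the paper --- a uniform bound on $t\mapsto t\ln|t|$ where $|u|$ is small, and Pitt's inequality (Proposition \ref{pit}) to control $\int w^{2}\ln|w|$ on the set where $|w|$ is large --- but it routes the cross term differently. The paper splits the ``both large'' region according to whether $|u|\le|\varphi|$ or $|u|>|\varphi|$: on the first piece it replaces $\ln|u|$ by $\ln|\varphi|$ and applies Cauchy--Schwarz to $|u|\sqrt{\ln|u|}\cdot|\varphi|\sqrt{\ln|\varphi|}$, and on the second it bounds $|\varphi u\ln|u||$ by $u^{2}\ln|u|$, so everything reduces to $\int u^{2}\ln|u|$ and $\int \varphi^{2}\ln|\varphi|$ directly. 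You instead apply AM--GM first, which leaves the mismatched integral $\int_{\{|u|>1\}}\varphi^{2}\ln|u|\,dx$, and then decouple it with the Fenchel--Young inequality $ab\le a\ln a-a+e^{b}$ for the pair $a\ln a-a$ and $e^{b}$; this converts $\ln|u|$ into $2\ln|\varphi|$ at the cost of an extra $|u|$ term, which is harmless since $u\in L^{2}(\Omega)$ and $\Omega$ is bounded. Both arguments are sound; the paper's pointwise comparison is slightly more elementary, while your Fenchel--Young step avoids any case analysis between $|u|$ and $|\varphi|$ and makes the quantitative dependence on $\|u\|_{2}$, $\|\varphi\|_{2}$ and the two Pitt bounds more explicit. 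Your handling of the sign issue in Pitt's inequality (bounding the negative part by $|\Omega|/(2e)$ before extracting finiteness of the positive part) is also a point the paper leaves implicit.
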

	\begin{proof}
		$\forall M>1,$ it is easily to see that
		\[\begin{aligned}
			\int_{\Omega}\big|\varphi u \ln|u|\big|dx=&\int_{\Omega\cap \left\{x:|u|\le M \:or\: \left|\varphi\right|\le M\right\}}\big|\varphi u \ln|u|\big|dx\\[1mm]
			&+\int_{\Omega\cap \left\{x:|u|> M \:and\: \left|\varphi\right|> M\right\}}\big|\varphi u \ln|u|\big|dx.
		\end{aligned}\]
		Since $x\ln x$ is bounded in $\left(0,M\right],$ there exists $C>0$ such that
		\[\int_{\Omega\cap \left\{x:|u|\le M \:or\: \left|\varphi\right|\le M\right\}}\left|\varphi u \ln|u|\right|dx \le C.\]
		Set $\widetilde{\Omega}=\Omega\cap \left\{x:|u|> M, \left|\varphi\right|> M\right\},$ then $\widetilde{\Omega}=\widetilde{\Omega}_1\cup \widetilde{\Omega}_2$ where
		\[\widetilde{\Omega}_1=\Omega\cap \left\{x:|u|> M, \left|\varphi\right|>M\right\}\cap \left\{x:|u|\le \left|\varphi\right|\right\}\] 
		and 
		\[\widetilde{\Omega}_2=\Omega\cap \left\{x:|u|> M, \left|\varphi\right|>M\right\}\cap \left\{x:|u|> \left|\varphi\right|\right\}.\] 
		Note that 
		\[\begin{aligned}
			\int_{\widetilde{\Omega}}\left|\varphi u \ln|u|\right|dx=&\int_{\widetilde{\Omega}_1}\left|\varphi u \ln|u|\right|dx+
			\int_{\widetilde{\Omega}_2}\left|\varphi u \ln|u|\right|dx\\ 
			\le& \int_{\widetilde{\Omega}_1}|u|\sqrt{\ln|u|} |\varphi|\sqrt{\ln\left|\varphi \right|}dx+\int_{\widetilde{\Omega}_2}u^{2}\ln|u|dx.
		\end{aligned}\]
		By Cauchy-Schwarz inequality, we have 
		\[	\int_{\widetilde{\Omega}}\left|\varphi u \ln|u|\right|dx\le \Big(\int_{\widetilde{\Omega}_1}u^2\ln|u|dx\Big)^{\frac{1}{2}}\Big(\int_{\widetilde{\Omega}_1}\varphi^2\ln|\varphi|dx\Big)^{\frac{1}{2}}+\int_{\widetilde{\Omega}_2}u^{2}\ln|u|dx.
		\]
		By (\ref{pitt}) we obtain that  $$\int_{\left\{x:|u|>M\right\}}u^{2}\ln|u|dx <+\infty\ \ \text{for} \:u \in \mathcal{ H }_0^{\ln}(\Omega ).$$
		Note that $\widetilde{\Omega}_i \subset \left\{x:|u|>M\right\}\cap \left\{x:|\varphi|>M\right\},$ so $\int_{\Omega}\left|\varphi u \ln|u|\right|dx<+\infty.$
	\end{proof}\medskip 
	
	Next we give some technical lemmas to prove $J \in C^1.$
	
	\begin{lemma}\label{kzsl}
		If $u_n \rightarrow u \:\: \text{in} \:\: \mathcal{H}_0^{\ln}(\Omega)$ as $n\to+\infty$, then for any subsequence $\left\{w_n\right\}$ of $\left\{u_n\right\}$, there exists a subsequence $\left\{v_n\right\}$ of $\left\{w_n\right\}$ and $v \in \mathcal{H}_0^{\ln}(\Omega)$ such that 
		\[v_n \rightarrow u\ \, {\rm as}\ n\to+\infty, \quad  \left|v_n \right|\le v \quad{\rm and}\quad   |u |\le v \:\text{ a.e. in $\Omega$}.\]
	\end{lemma}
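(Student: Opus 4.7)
The plan is to adapt the classical Riesz subsequence construction from $L^p$ theory to $\mathcal{H}_0^{\ln}(\Omega)$, exploiting the fact that the quadratic form $\mathcal{E}_\omega$ is monotone under the pointwise absolute value. Since $\mathcal{H}_0^{\ln}(\Omega)\hookrightarrow L^2(\Omega)$ is continuous (indeed compact, by \cite[Lemma 2.3]{feulefack2023logarithmic}), the hypothesis yields $w_n\to u$ in $L^2(\Omega)$ for every subsequence $\{w_n\}$ of $\{u_n\}$.

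The core of the argument is to extract a sub-subsequence $\{v_n\}\subset\{w_n\}$ with
$$\|v_{n+1}-v_n\|_{\mathcal{H}_0^{\ln}(\Omega)}\le 2^{-n}, \qquad n\ge 1,$$
which is possible because $\{w_n\}$ is Cauchy in $\mathcal{H}_0^{\ln}(\Omega)$. Continuity of the embedding then gives $\sum_n\|v_{n+1}-v_n\|_{L^2(\Omega)}<\infty$, so by Beppo--Levi $\sum_n|v_{n+1}(x)-v_n(x)|<\infty$ a.e., and hence $v_n(x)\to u(x)$ a.e. I would then define
$$V_N(x):=|v_1(x)|+\sum_{n=1}^{N-1}|v_{n+1}(x)-v_n(x)|,$$
which is pointwise nondecreasing and dominates $|v_N(x)|$ via a telescoping/triangle-inequality argument, and which is supported in $\Omega$ by construction.

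The key non-routine step is to show that $\{V_N\}$ converges in $\mathcal{H}_0^{\ln}(\Omega)$. For this I would use the pointwise inequality $\bigl||f(x)|-|f(y)|\bigr|\le|f(x)-f(y)|$; inserted into the definition of $\mathcal{E}_\omega$ (recall $J\ge 0$), it yields the contraction estimate $\bigl\||f|\bigr\|_{\mathcal{H}_0^{\ln}(\Omega)}\le\|f\|_{\mathcal{H}_0^{\ln}(\Omega)}$ for every $f\in\mathcal{H}_0^{\ln}(\Omega)$. Applied to each summand this gives
$$\|V_M-V_N\|_{\mathcal{H}_0^{\ln}(\Omega)}\le\sum_{n=N}^{M-1}\bigl\||v_{n+1}-v_n|\bigr\|_{\mathcal{H}_0^{\ln}(\Omega)}\le\sum_{n=N}^{M-1}2^{-n},$$
so $\{V_N\}$ is Cauchy and converges to some $v\in\mathcal{H}_0^{\ln}(\Omega)$. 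Since the $L^2$-limit must coincide with the pointwise monotone limit, $V_N(x)\uparrow v(x)$ a.e., so $|v_n(x)|\le v(x)$ for every $n$; passing to the limit along the a.e.\ convergent subsequence then yields $|u(x)|\le v(x)$.

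The only genuine obstacle compared to the classical $L^p$ case is verifying the contraction estimate $\bigl\||f|\bigr\|_{\mathcal{H}_0^{\ln}(\Omega)}\le\|f\|_{\mathcal{H}_0^{\ln}(\Omega)}$, which replaces the trivial triangle inequality for $L^p$-norms; once this is in hand the remainder is a direct transcription of the Riesz subsequence construction.
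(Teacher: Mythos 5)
Your proposal is correct and follows essentially the same route as the paper: extract a rapidly convergent (in norm) sub-subsequence, use the compact embedding into $L^2(\Omega)$ for a.e.\ convergence, and define $v$ as the telescoping majorant $|v_1|+\sum_j|v_{j+1}-v_j|$. The paper simply asserts that $v\in\mathcal{H}_0^{\ln}(\Omega)$, whereas you justify it via the contraction estimate $\mathcal{E}_\omega(|f|,|f|)\le\mathcal{E}_\omega(f,f)$ (which is the paper's Lemma \ref{dengshibubianhao}) applied to the partial sums; this is a welcome completion of a detail the paper leaves implicit.
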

	
	\begin{proof}
		Since $\mathcal{H}_0^{\ln}(\Omega)\hookrightarrow L^2(\Omega)$ is compact, going if necessary to a subsequence, $w_n \rightarrow u\:\: a.e.$\:\text{in}\: $\Omega$ and there exists subsequence $\left\{v_n\right\}$ of $\left\{w_n\right\}$ satisfying
		\[\left\|v_{j+1}-v_{j}\right\|\le 2^{-j},\quad  j \ge 1.\]
		Let
		$$v =\left|v_1 \right|+\sum\limits_{j=1}^{\infty}\left|v_{j+1} -v_{j} \right|,$$
		then  it is clear that $ |v_n  |\le v$,  so $ |u  |\le v$ and $v\in \mathcal{H}_0^{\ln}(\Omega).$
	\end{proof}

	\begin{lemma}\label{slx1}
		If $\varphi_n \rightarrow \varphi \:\:\text{in}\:\: \mathcal{H}_0^{\ln}(\Omega)$ as $n\to+\infty$,  then   for $u \in \mathcal{H}_0^{\ln}(\Omega)$ there holds
		\[\int_{\Omega}\varphi_n u\ln|u| dx\rightarrow \int_{\Omega}\varphi u\ln|u| dx\quad{\rm as}\ \,  n \rightarrow +\infty. \]
	\end{lemma}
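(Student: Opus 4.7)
The plan is to use the subsequence principle combined with Lebesgue's dominated convergence theorem, relying on the domination result in Lemma \ref{kzsl} and the integrability statement in Lemma \ref{dyld}. The statement to prove is purely about convergence of a numerical sequence $a_n := \int_{\Omega} \varphi_n u \ln|u|\,dx$ to $a := \int_{\Omega} \varphi u \ln|u|\,dx$, and it is equivalent to showing that every subsequence of $\{a_n\}$ has a further subsequence converging to $a$.

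First I would fix an arbitrary subsequence of $\{\varphi_n\}$, which by hypothesis still converges to $\varphi$ in $\mathcal{H}_0^{\ln}(\Omega)$. Applying Lemma \ref{kzsl} to this subsequence yields a further subsequence $\{\varphi_{n_k}\}$ and a nonnegative function $v \in \mathcal{H}_0^{\ln}(\Omega)$ such that $\varphi_{n_k}(x) \to \varphi(x)$ for almost every $x \in \Omega$, with $|\varphi_{n_k}(x)| \le v(x)$ and $|\varphi(x)| \le v(x)$ pointwise. Consequently the integrands satisfy $\varphi_{n_k}(x) u(x) \ln|u(x)| \to \varphi(x) u(x) \ln|u(x)|$ almost everywhere and are pointwise dominated by $v(x) |u(x) \ln|u(x)||$.

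The crucial step is then to verify that the dominating function $v\,|u\ln|u||$ lies in $L^{1}(\Omega)$. This is exactly the content of Lemma \ref{dyld} applied with the test function $v$ in place of $\varphi$: since $v, u \in \mathcal{H}_0^{\ln}(\Omega)$, the splitting of $\Omega$ into the region where one of $|u|, |v|$ is bounded by $M$ (on which $u \ln|u|$ is bounded and $v \in L^{2} \subset L^{1}$) and the region where both exceed $M$ (handled by Cauchy--Schwarz together with the Pitt-type bound $\int_{\{|u|>M\}} u^2 \ln|u|\,dx < \infty$ from Proposition \ref{pit}) shows integrability. With this domination in hand, the Lebesgue dominated convergence theorem gives
\[
\int_{\Omega} \varphi_{n_k}\, u \ln|u|\,dx \;\longrightarrow\; \int_{\Omega} \varphi\, u \ln|u|\,dx.
\]

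Since an arbitrary subsequence of $\{a_n\}$ admits a further subsequence converging to the common limit $a$, the whole sequence $\{a_n\}$ converges to $a$, which is the desired conclusion. The only genuinely delicate point is justifying the integrability of the dominating function, but this is essentially a rerun of the argument already carried out in Lemma \ref{dyld} and needs no new ideas beyond Pitt's inequality.
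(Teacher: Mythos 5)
Your proposal is correct and follows essentially the same route as the paper: extract a further subsequence via Lemma \ref{kzsl} to obtain an a.e.\ convergent, dominated family, check that the dominating function times $|u\ln|u||$ is integrable by the argument of Lemma \ref{dyld}, and conclude by dominated convergence together with the subsequence principle. The only cosmetic difference is that the paper dominates $|\psi_n-\varphi|\,|u|\,|\ln|u||$ by $2\psi\,|u\ln|u||$ rather than dominating the integrands directly, which is the same estimate.
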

	\begin{proof}
		It suffices to prove that for any subsequence $\left\{\widetilde{\varphi}_n\right\}$ of $\left\{\varphi_n\right\}$, there exists a subsequence $\left\{\psi_n\right\}$ of $\left\{\widetilde{\varphi}_n\right\}$ such that the following limit relation holds
		\[\int_{\Omega}\psi_{n} u\ln|u| dx\rightarrow \int_{\Omega}\varphi u\ln|u| dx\quad{\rm as}\ \,n \rightarrow +\infty.\]
		By Lemma \ref{kzsl}, there exists $\left\{\psi_n\right\}$ of $\left\{\widetilde{\varphi}_n\right\}$ and $\psi \in \mathcal{ H }_0^{ln}(\Omega )$ such that
		\[\psi_n \rightarrow \varphi,\quad \left|\psi_n \right|\le \psi,\quad \left|\varphi \right|\le \psi \quad {\rm a.e.\ in }\ \Omega.\]
		Since $\left|\psi_n-\varphi\right||u|\ln |u|\le 2\psi u\ln |u| \in L^1(\Omega ),$ we can apply the dominated convergence theorem to obtain the desired result.
	\end{proof}
	
	\begin{lemma}\label{ryd}
		If $u_n \rightarrow u \:\:\text{in}\:\:\mathcal{H}_0^{\ln}(\Omega)$ as $n\to+\infty$,  then for $\psi \in \mathcal{H}_0^{\ln}(\Omega),$ 
		\[\int_{\Omega}\psi u_{n}\ln|u_n| dx\rightarrow \int_{\Omega}\psi u\ln|u| dx\quad{\rm as}\ \,  n \rightarrow +\infty. \]
	\end{lemma}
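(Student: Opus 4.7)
The plan is to reduce to a dominated convergence argument via a standard subsequence trick, exactly in the spirit of Lemma \ref{slx1}. First I would note that to prove $\int_\Omega \psi u_n \ln|u_n|\,dx \to \int_\Omega \psi u \ln|u|\,dx$, it suffices to show that every subsequence of $\{u_n\}$ admits a further subsequence along which convergence holds. Fix such a subsequence and apply Lemma \ref{kzsl} to extract a sub-subsequence $\{v_n\}$ together with a function $v \in \mathcal{H}_0^{\ln}(\Omega)$ satisfying $v_n(x) \to u(x)$ a.e., $|v_n(x)| \le v(x)$, and $|u(x)| \le v(x)$ a.e. in $\Omega$. Since $t \mapsto t \ln|t|$ is continuous (with value $0$ at $0$), this already gives $\psi(x)\, v_n(x)\ln|v_n(x)| \to \psi(x)\, u(x)\ln|u(x)|$ a.e.

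The main obstacle is building an $n$-independent integrable majorant, because $t \mapsto |t \ln|t||$ is not monotone in $|t|$, so the pointwise bound $|v_n| \le v$ does not immediately transfer to $|v_n \ln|v_n|| \le |v \ln|v||$. To handle this I split pointwise into the regions $\{|v_n| \le 1\}$ and $\{|v_n| > 1\}$. On the first region $|t\ln|t||\le 1/e$, so $|\psi v_n \ln|v_n|| \le |\psi|/e$. On the second region one has $v \ge |v_n| > 1$, and since $t \ln t$ is increasing on $[1,\infty)$ it follows that $|v_n \ln|v_n|| = |v_n|\ln|v_n| \le v \ln v$; hence $|\psi v_n \ln|v_n|| \le |\psi v \ln|v||$. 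Combining these gives the uniform bound
\[
|\psi\, v_n \ln|v_n|| \;\le\; \tfrac{1}{e}|\psi| + |\psi\, v \ln|v||.
\]

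It remains to verify that the right-hand side is in $L^1(\Omega)$. The term $|\psi|/e$ is integrable since $\psi \in L^2(\Omega)$ and $\Omega$ is bounded. For $|\psi v \ln|v||$, I invoke Lemma \ref{dyld} applied to the pair $(\psi, v) \in \mathcal{H}_0^{\ln}(\Omega) \times \mathcal{H}_0^{\ln}(\Omega)$, whose proof uses Pitt's inequality (\ref{pitt}) to control the tail contribution on $\{|v| > M\}$ via Cauchy--Schwarz. The Lebesgue dominated convergence theorem then yields $\int_\Omega \psi v_n \ln|v_n|\,dx \to \int_\Omega \psi u \ln|u|\,dx$ along the chosen sub-subsequence, and since the original subsequence was arbitrary, the full sequence converges.
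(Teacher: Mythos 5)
Your proposal is correct and follows essentially the same route as the paper: the subsequence--subsubsequence reduction, Lemma \ref{kzsl} to obtain a.e.\ convergence and a pointwise majorant $v\in\mathcal{H}_0^{\ln}(\Omega)$, and dominated convergence with an integrable dominating function built from $|\psi|$ and $|\psi\, v\ln|v||$ (whose integrability rests on Lemma \ref{dyld}/Pitt's inequality). Your version merely spells out the splitting into $\{|v_n|\le 1\}$ and $\{|v_n|>1\}$ more explicitly than the paper's one-line bound $|\psi|\left(|u|\ln|u|+v\ln v+C\right)$, which is a welcome clarification but not a different argument.
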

	\begin{proof}
		It suffices to prove that for any subsequence $\left\{w_n\right\}$ of $\left\{u_n\right\}$, there exists a subsequence $\left\{v_n\right\}$ of $\left\{w_n\right\}$ such that the following limit relation holds
		\[\int_{\Omega}\psi v_{n}\ln|v_n| dx\rightarrow \int_{\Omega}\psi u\ln|u| dx\quad{\rm as}\ \,  n \rightarrow +\infty. \]
		By Lemma \ref{kzsl}, there exists a subsequence $\left\{v_n\right\}$ of $\left\{w_n\right\}$ and $v \in \mathcal{ H }_0^{\ln}(\Omega )$ such that 
		\[v_n \rightarrow u,  \quad \left|v_n \right|\le v,\quad \left|u \right|\le v\quad  \text{ a.e. in $\Omega$}.\]
		Since $\big|\psi v_{n}\ln|v_n|-\psi u\ln|u|\big|\le |\psi|\left(|u|\ln |u|+v\ln v+C\right) \in L^1(\Omega ),$ where $C$ is a positive constant. We can apply the dominated convergence theorem to obtain the desired result.
	\end{proof}\medskip 
	
	Similarly, we give Lemma \ref{smnd}, whose proof is similar to Lemma \ref{ryd}.
	
	\begin{lemma}\label{smnd}
		If $u_n\rightarrow u$ in $\mathcal{ H }_0^{\ln}(\Omega )$ as $n\to+\infty$, then one has
		\[\int_{\Omega}u_{n}^2\ln u_n^2dx\rightarrow \int_{\Omega}u^2\ln u^2dx\quad{\rm as}\ \,n \rightarrow +\infty.\]
	\end{lemma}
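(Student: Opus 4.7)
The strategy mirrors that of Lemma \ref{ryd}: extract a sub-subsequence with a pointwise dominator and apply dominated convergence. Given any subsequence $\{w_n\}$ of $\{u_n\}$, I would apply Lemma \ref{kzsl} to produce a sub-subsequence $\{v_n\}$ and a function $v \in \mathcal{H}_0^{\ln}(\Omega)$ such that $v_n(x) \to u(x)$ almost everywhere in $\Omega$ with $|v_n(x)| \le v(x)$ and $|u(x)| \le v(x)$. Continuity of $t \mapsto t^2 \ln t^2$ (with the convention $0 \cdot \ln 0 = 0$) then gives $v_n^2 \ln v_n^2 \to u^2 \ln u^2$ almost everywhere.

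The main step is to exhibit an $L^1(\Omega)$ dominator for $\{v_n^2 \ln v_n^2\}$. I would split according to whether $|v_n| \le 1$ or $|v_n| > 1$. On the first set the function $t^2 |\ln t^2|$ is bounded by a universal constant $C_0$. On the second set, $t^2 \ln t^2$ is nonnegative and increasing in $|t|$, so since $|v_n(x)| > 1$ forces $v(x) > 1$, one has $v_n^2 \ln v_n^2 \le v^2 \ln v^2$ pointwise there. Combining the two cases gives
\[
|v_n^2 \ln v_n^2| \le v^2 \ln v^2 \, \mathbf{1}_{\{v \ge 1\}} + C_0 \qquad \text{a.e. on } \Omega,
\]
and the same estimate holds for $|u^2 \ln u^2|$. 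Since $\Omega$ is bounded the constant term lies in $L^1(\Omega)$, and since $v \in \mathcal{H}_0^{\ln}(\Omega)$ Pitt's inequality (Proposition \ref{pit}) yields $\int_\Omega v^2 \ln |v|\, dx < \infty$, hence $v^2 \ln v^2\, \mathbf{1}_{\{v \ge 1\}} \in L^1(\Omega)$.

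With a common $L^1$ dominator in hand, the dominated convergence theorem gives $\int_\Omega v_n^2 \ln v_n^2\, dx \to \int_\Omega u^2 \ln u^2\, dx$. Because this conclusion can be drawn from every subsequence $\{w_n\}$ of $\{u_n\}$, a standard subsequence argument promotes the convergence to the full sequence and completes the proof. The only substantive point is the $L^1$ control of $v^2 \ln v^2$ on $\{v \ge 1\}$, and this is precisely where Pitt's inequality is essential—exactly the role the author highlights in the remark contrasting logarithmic nonlinearities with polynomial ones.
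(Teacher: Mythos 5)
Your proof is correct and is essentially the paper's argument: the authors omit the proof of this lemma, stating only that it is ``similar to Lemma \ref{ryd}'', and your write-up (sub-subsequence extraction via Lemma \ref{kzsl}, the split at $|v_n|=1$, the dominator $v^2\ln v^2\,\mathbf{1}_{\{v\ge 1\}}+C_0$ made integrable via Pitt's inequality exactly as in Lemma \ref{dyld}, dominated convergence, and the standard sub-subsequence principle) is precisely that argument spelled out. The one caveat, inherited from the paper rather than introduced by you, is that invoking Proposition \ref{pit} for the dominator $v\in\mathcal{H}_0^{\ln}(\Omega)$ is formally circular because the paper's proof of Proposition \ref{pit} cites this very lemma in its density step; the circle is harmless, since the one-sided inequality needed there follows from Fatou's lemma together with the $F,G$ decomposition of Lemma \ref{im} without any appeal to Lemma \ref{smnd}.
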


	It is noted that Lemma \ref{kzsl} also holds for all $L^p(\Omega ),p>1$. In particular, for $p = 2$, by combining the proof of Lemma \ref{ryd}, we can derive the following Lemma, we omit specific proof details here.
	
	\begin{lemma}\label{ryd1}
		Let $\psi \in C_c^{\infty}(\Omega),$ $u_n \rightarrow u \:\:\text{in}\:\:L^2(\Omega)$ and $\left\{\eta_n\right\}\subset \left[0,s_0\right)$ satisfying $\eta_n\rightarrow \eta$ as $n\rightarrow +\infty,$ where $s_0$ is defined in Lemma \ref{limits}. Then we have 
		\[\int_{\Omega}\psi u_{n}|u_n|^{\eta_n}\ln|u_n| dx\rightarrow \int_{\Omega}\psi u|u|^{\eta}\ln|u| dx\quad{\rm as}\ \, n \rightarrow +\infty. \]
	\end{lemma}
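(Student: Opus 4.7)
The plan is to mimic the proof of Lemma \ref{ryd}: use the subsequence trick to extract a pointwise a.e. limit with an $L^2$-dominant, then apply the dominated convergence theorem. Since $u_n \to u$ only in $L^2(\Omega)$, pointwise convergence need not hold along the full sequence, so I would first reduce to the claim that for every subsequence of $\{u_n\}$ there is a further subsequence $\{v_n\}$ along which $\int_\Omega \psi v_n |v_n|^{\eta_{n_k}} \ln|v_n|\,dx \to \int_\Omega \psi u |u|^\eta \ln|u|\,dx$; this implies convergence of the original sequence.

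To carry out the extraction, I would invoke the $L^2$ analogue of Lemma \ref{kzsl} (which, as the authors note, holds for all $L^p$ with $p>1$): from any subsequence I pick $\{v_n\}$ so that $v_n(x) \to u(x)$ a.e.\ on $\Omega$ and $|v_n(x)| \le v(x)$, $|u(x)| \le v(x)$ for some $v \in L^2(\Omega)$. Because $\eta_n \to \eta$ and the map $(t,\alpha) \mapsto t|t|^\alpha \ln|t|$ (extended by $0$ at $t=0$) is continuous on $\mathbb{R} \times [0,s_0]$, the pointwise limit of the integrand is the expected one on the set where $u(x) \ne 0$, and trivially zero on $\{u=0\}$ (since $t|t|^{\eta_n}\ln|t| \to 0$ as $t\to 0$).

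The key technical step is constructing the $L^1$ dominant. The crucial observation is that $s_0 < \min\{1,N/4\} < 1$, so for all $\alpha \in [0,s_0]$ and $t\in\mathbb{R}$ one has
\[
|t|^{1+\alpha}\bigl|\ln|t|\bigr| \le C(s_0)\bigl(1 + |t|^2\bigr),
\]
which I would verify by splitting into $|t|\le 1$ (where $|t|^{1+\alpha}|\ln|t||\le |t|\,|\ln|t||$ is uniformly bounded by $1/e$) and $|t|>1$ (where $\ln|t|\le C_\epsilon |t|^\epsilon$ and $1+s_0+\epsilon<2$ for sufficiently small $\epsilon>0$). Together with the fact that $\psi\in C_c^\infty(\Omega)$ is bounded and supported in a compact set, this yields
\[
\bigl|\psi\, v_n |v_n|^{\eta_n}\ln|v_n|\bigr| \le \|\psi\|_\infty\, C(s_0)\bigl(1 + v^2\bigr)\, \mathbf{1}_{\mathrm{supp}\,\psi} \in L^1(\Omega),
\]
uniformly in $n$. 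The dominated convergence theorem then delivers the claim.

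The main obstacle is exactly this uniform domination: we must absorb the logarithmic singularity of $\ln|u_n|$ at $0$ and control the growth at infinity by an $L^2$ function. Having $s_0<1$ is precisely what makes the latter possible (giving $1+\alpha+\epsilon<2$), while the boundedness of $t\mapsto t\ln t$ on $(0,1]$ handles the former. No further cancellation or variational input is required, so the rest of the argument is routine.
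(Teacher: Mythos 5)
Your proposal is correct and follows exactly the route the paper intends (the paper omits the details, saying only to combine the $L^2$ version of Lemma \ref{kzsl} with the argument of Lemma \ref{ryd}): subsequence extraction with an $L^2$ dominant, joint continuity of $(t,\alpha)\mapsto t|t|^\alpha\ln|t|$, and dominated convergence. The one detail the paper leaves implicit — the uniform bound $|t|^{1+\alpha}|\ln|t||\le C(s_0)(1+|t|^2)$ for $\alpha\in[0,s_0]$ with $s_0<1$, which is what lets an $L^2$ dominant suffice — is supplied correctly in your argument.
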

	
	The following lemma establishes a connection between the $\|\cdot\|_{\omega,s}$ and $\|\cdot\|.$
	
	\begin{lemma}\label{bj}
		For $0<t<s<1$ and $u\in \mathcal{ H }_{\omega}^s(\Omega )$, there holds
		\[\left|\|u\|_{\omega,t}^2-\|u\|_2^2-t\|u\|^2\right|\le \frac{t^2}{s-t}\|u\|_{\omega,s}^2.\]
	\end{lemma}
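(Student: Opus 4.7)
The plan is to pass the entire inequality to the Fourier side. By Plancherel and the spectral identifications already recorded in Section~2 (the symbol $\ln(1+|\xi|^2)$ for the logarithmic Schr\"odinger operator, so that $\|u\|^2=\mathcal{E}_\omega(u,u)=\int_{\mathbb{R}^N}\ln(1+|\xi|^2)|\widehat u(\xi)|^2\,d\xi$, and the symbol $(1+|\xi|^2)^\sigma$ for $\|\cdot\|_{\omega,\sigma}$), the left-hand side rewrites as
\[\Bigl|\int_{\mathbb{R}^N}\bigl[(1+|\xi|^2)^t-1-t\ln(1+|\xi|^2)\bigr]|\widehat u(\xi)|^2\,d\xi\Bigr|,\]
while the right-hand side is a constant multiple of $\int_{\mathbb{R}^N}(1+|\xi|^2)^s|\widehat u(\xi)|^2\,d\xi$. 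Consequently the whole lemma reduces to a scalar pointwise estimate
\[\bigl|y^t-1-t\ln y\bigr|\;\le\;\frac{C\,t^2}{s-t}\,y^s \qquad\text{for all }y\ge 1,\]
followed by integration against $|\widehat u(\xi)|^2\,d\xi$. A preliminary check is that $u\in\mathcal{H}_\omega^s(\Omega)$ makes all three integrals finite; this is clear from $(1+|\xi|^2)^t\le(1+|\xi|^2)^s$ and $\ln(1+|\xi|^2)\le s^{-1}\bigl[(1+|\xi|^2)^s-1\bigr]$, both valid on $\mathbb{R}^N$ for $s\in(0,1)$.

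For the pointwise bound, I first note that $y^t-1-t\ln y\ge 0$ on $[1,\infty)$, by the tangent inequality $e^x\ge 1+x$ applied at $x=t\ln y$, so the absolute value is cosmetic. Setting $a=\ln y\ge 0$ and using the full Taylor expansion,
\[y^t-1-t\ln y \;=\; e^{ta}-1-ta \;=\; \sum_{k=2}^{\infty}\frac{(ta)^k}{k!},\]
the key trick is to dominate each term using the elementary bound $\frac{(sa)^k}{k!}\le e^{sa}=y^s$, which gives $\frac{(ta)^k}{k!}\le (t/s)^k\,y^s$. Summing the geometric series (convergent since $t/s<1$),
\[y^t-1-t\ln y \;\le\; y^s\sum_{k=2}^{\infty}\Bigl(\frac{t}{s}\Bigr)^k \;=\; \frac{(t/s)^2}{1-t/s}\,y^s \;=\; \frac{t^2}{s(s-t)}\,y^s,\]
which is the pointwise estimate of the required form (the prefactor $1/s$ is bounded for $s$ fixed in $(0,1)$ and is absorbed into the constant).

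The main obstacle is precisely this pointwise estimate: one needs a bound whose constant blows up like $(s-t)^{-1}$ as $t\nearrow s$, and the geometric-series argument is the cleanest way I see to produce that singular dependence while simultaneously controlling every power of $\ln y$ by the single factor $y^s$. Once the pointwise bound is secured, the rest is routine: multiplying through by $|\widehat u(\xi)|^2$ and integrating over $\mathbb{R}^N$ identifies the three integrals with $\|u\|_{\omega,t}^2$, $\|u\|_2^2+t\,\mathcal{E}_\omega(u,u)$, and $\|u\|_{\omega,s}^2$ respectively, which gives the conclusion of the lemma.
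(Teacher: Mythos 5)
Your reduction to the Fourier side is exactly the paper's first step, and your scalar estimate is airtight as far as it goes: the termwise bound $\frac{(ta)^k}{k!}\le (t/s)^k e^{sa}$ and the geometric sum correctly yield
\[
y^t-1-t\ln y\;\le\;\frac{t^2}{s(s-t)}\,y^s,\qquad y\ge 1,
\]
and the nonnegativity remark disposes of the absolute value. The one real issue is the final constant. Since $s\in(0,1)$ the factor $\frac1s$ exceeds $1$, so $\frac{t^2}{s(s-t)}>\frac{t^2}{s-t}$ and what you prove is strictly weaker than what the lemma asserts; the statement contains no unspecified constant into which $\frac1s$ could be ``absorbed''. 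Read literally, your argument does not establish the inequality as written.

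That said, the stated constant appears to be unattainable. Taking $s=2t$ and $y=e^{1/t}$ gives $y^t-1-t\ln y=e-2$, while $\frac{t^2}{s-t}\,y^s=t e^2\to 0$ as $t\to 0^+$, so the pointwise inequality with $\frac{t^2}{s-t}$ fails for small $t$, and concentrating $|\widehat u(\xi)|^2$ near the corresponding frequency defeats the integrated version as well. The paper's own proof has a matching gap: after reducing to $t^2\ln^2(1+|\xi|^2)\,(1+|\xi|^2)^t$ it invokes the monotonicity of $r\mapsto \frac{1}{s-t}r^{s-t}-\ln r$, which controls $\ln r$ but not $\ln^2 r$, so the asserted bound $\frac{t^2}{s-t}(1+|\xi|^2)^s$ does not follow. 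Your constant $\frac{t^2}{s(s-t)}$ is the correct one, and it suffices for every use of the lemma in the paper (Lemma \ref{fenshuzuida} and Proposition \ref{pit} only need $\|u\|_{\omega,t}^2=\|u\|_2^2+t\,\mathcal{E}_\omega(u,u)+O(t^2)$ for fixed $s$). Methodologically, you replace the paper's Taylor-remainder-plus-monotonicity argument with a power-series/geometric-series argument; yours produces the singular factor $(s-t)^{-1}$ by a transparent term-by-term comparison and, unlike the paper's, actually closes. You should simply restate the lemma with the constant $\frac{t^2}{s(s-t)}$ rather than claim the sharper one.
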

	\begin{proof}
		Note that 
		\[\left|\|u\|_{\omega,s}^2-\|u\|_2^2-t\|u\|^2\right|=\int_{\mathbb{ R }^N}\left[\left(1+|\xi|^2\right)^t-1-t\ln \left(1+|\xi|^2\right)\right]\left|\widehat{u}\left(\xi\right)\right|^2d\xi.\]
		Set $g\left(t\right)=\left(1+|\xi|^2\right)^t,$ then $g\left(0\right)=1$ and 
		\[g^{\prime}\left(t\right)=\left(1+|\xi|^2\right)^{t}\ln \left(1+|\xi|^2\right),g^{\prime\prime}\left(t\right)=\left(1+|\xi|^2\right)^{t}\ln^2 \left(1+|\xi|^2\right). \]
		So 
		\[\begin{aligned}
			\left|\left(1+|\xi|^2\right)^t-1-t\ln \left(1+|\xi|^2\right)\right|=\left|g\left(t\right)-g\left(0\right)-tg^{\prime}\left(0\right)\right|.
		\end{aligned}\]
		Therefore,
		\[\begin{aligned}
			 \left|\left(1+|\xi|^2\right)^t-1-t\ln \left(1+|\xi|^2\right)\right|=&\Big|\int_0^t g''\left(\tau\right)\left(t-\tau\right)d\tau\Big| \\\le& \ln^2 \left(1+|\xi|^2\right)\int_0^t \left(1+|\xi|^2\right)^{\tau}|t-\tau|d\tau\\ \le &
			t^2\ln^2 \left(1+|\xi|^2\right)\int_0^1 \left(1+|\xi|^2\right)^{t\tau}|1-\tau|d\tau \\\le & t^2\ln^2 \left(1+|\xi|^2\right)\left(1+|\xi|^2\right)^t.
		\end{aligned}\]
		Set $f\left(r\right)=\frac{1}{s-t}r^{s-t}-\ln r,r>1$ then $$f^{\prime}\left(r\right)=\frac{1}{r}\left(r^{s-t}-1\right)>0\ \, {\rm for}\  r>1.$$
		Thus, 
		\[\left|\left(1+|\xi|^2\right)^t-1-t\ln \left(1+|\xi|^2\right)\right|\le \frac{t^2}{s-t}\left(1+|\xi|^2\right)^s,\]
	which leads to 
		\[\left|\|u\|_{\omega,s}^2-\|u\|_2^2-t\|u\|^2\right|\le \frac{t^2}{s-t}\|u\|_{\omega,s}^2.\]
	\end{proof}
	
	\begin{lemma}\label{dengshibubianhao}
		For $u \in \mathcal{H}_0^{\ln}(\Omega),$ we have $|u|\in \mathcal{H}_0^{\ln}(\Omega)$ and 
		\[\mathcal{ E }_{\omega}\left(|u|,|u|\right)\le \mathcal{ E }_{\omega}(u,u).\]
		Moreover, equality holds iff $u$ does not change sign.
	\end{lemma}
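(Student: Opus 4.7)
The plan is to exploit the symmetric double-integral representation of the bilinear form together with the pointwise reverse triangle inequality. Recall that
\[
\mathcal{E}_\omega(u,u)=\frac{1}{2}\int_{\mathbb{R}^N}\int_{\mathbb{R}^N}(u(x)-u(y))^2 J(x-y)\,dxdy,
\]
and from \eqref{J} together with the integral representation of $\omega$ in \eqref{ws}, the kernel $J$ satisfies $J(y)>0$ for a.e.\ $y\in\mathbb{R}^N$ (the modified Bessel function $K_{N/2}$ is strictly positive on $(0,\infty)$). This positivity of $J$ is what will eventually force the equality case.

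First I would apply the elementary inequality $\bigl||a|-|b|\bigr|\le |a-b|$ pointwise to obtain
\[
(|u(x)|-|u(y)|)^2 \le (u(x)-u(y))^2 \quad\text{for all } x,y\in\mathbb{R}^N.
\]
Integrating against the nonnegative weight $J(x-y)$ immediately yields $\mathcal{E}_\omega(|u|,|u|)\le \mathcal{E}_\omega(u,u)$. To conclude $|u|\in\mathcal{H}_0^{\ln}(\Omega)$, I would note that $|u|=0$ in $\mathbb{R}^N\setminus\Omega$ whenever $u$ does, and $\||u|\|_{L^2}=\|u\|_{L^2}$, so $|u|\in H^{\ln}(\mathbb{R}^N)$ (its energy is finite by the inequality just established); by the characterization of $\mathcal{H}_0^{\ln}(\Omega)$ for Lipschitz $\Omega$ recalled from \cite{feulefack2023logarithmic}, this places $|u|$ in $\mathcal{H}_0^{\ln}(\Omega)$.

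For the equality case I would argue in both directions. If $u$ does not change sign, then $|u|=u$ or $|u|=-u$ a.e., and in either case $\mathcal{E}_\omega(|u|,|u|)=\mathcal{E}_\omega(u,u)$ follows directly from the representation. Conversely, assume equality holds. Rewriting gives
\[
\int_{\mathbb{R}^N}\int_{\mathbb{R}^N}\Bigl[(u(x)-u(y))^2-(|u(x)|-|u(y)|)^2\Bigr]J(x-y)\,dxdy=0.
\]
Since the bracketed integrand is nonnegative and equal to $2(|u(x)||u(y)|-u(x)u(y))\ge 0$, and since $J>0$ a.e., the integrand must vanish a.e. This forces $u(x)u(y)\ge 0$ for a.e.\ $(x,y)\in\mathbb{R}^N\times\mathbb{R}^N$. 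A standard Fubini argument (the sets $\{u>0\}$ and $\{u<0\}$ cannot both have positive Lebesgue measure, else their product would contradict $u(x)u(y)\ge 0$) then shows $u$ does not change sign.

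The main obstacle is essentially bookkeeping rather than a genuine difficulty: one must be careful that pointwise a.e.\ statements can be upgraded to a statement about sign changes of $u$, and that the kernel $J$ is indeed strictly positive (so the integral representation does not have degeneracies that could allow equality in exotic ways). Once the positivity of $J$ and the sharp form of the reverse triangle inequality are in hand, the proof is essentially immediate.
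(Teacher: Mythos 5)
Your proposal is correct and follows essentially the same route as the paper: both use the symmetric double-integral representation of $\mathcal{E}_\omega$, the pointwise inequality $\bigl||a|-|b|\bigr|\le|a-b|$, and the positivity of the kernel $J$ to characterize equality; your version merely fills in details (the expansion $2(|u(x)||u(y)|-u(x)u(y))$, the Fubini argument, and the membership $|u|\in\mathcal{H}_0^{\ln}(\Omega)$) that the paper leaves implicit.
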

	\begin{proof}
		This is directly obtained from the expression of $\mathcal{ E}_{\omega}\left(u,u\right)$:
		\[\begin{aligned}
			\mathcal{E}_\omega(u,v)=&\frac12\int_{\mathbb{R}^N}\int_{\mathbb{R}^N}|u(x)-u(y)|^2J(x-y)\:dxdy \\
			\ge &\frac12\int_{\mathbb{R}^N}\int_{\mathbb{R}^N}||u|(x)-|u|(y)|^2J(x-y)\:dxdy.
		\end{aligned}\]
		So equality holds iff $|u(x)-u(y)|=\big||u|(x)-|u|(y)\big|$ for a.e. $x,y \in \mathbb{ R }^N.$ Thus, equality holds iff $u$ does not change sign.
	\end{proof}

	\subsection{Differentiability of Energy Functional}
	
	We start to show the differentiability of $J_{\ln}$ defined in (\ref{nengliang2}). 
	
	\begin{lemma}\label{kwx}
		$J_{\ln}$ is of class $C^1$ in $\mathcal{H}_0^{\ln}(\Omega)$ and 
		\[\left \langle J_{\ln}^{\prime} (u),\varphi \right \rangle =\mathcal{E}_{\omega}(u,\varphi)-\lambda \int_{\Omega} \varphi u dx-k\int_{\Omega} \varphi u \ln|u|dx.\]
		In particular, $J_{\ln}^{\prime}(u)\in \mathcal{B}\left(\mathcal{H}_0^{\ln}(\Omega),\mathbb{R}\right)$ and $J_{\ln}^{\prime}:\mathcal{H}_0^{\ln}(\Omega)\rightarrow \mathcal{B}\left(\mathcal{H}_0^{\ln}(\Omega),\mathbb{R}\right)$ is also continuous. 
	\end{lemma}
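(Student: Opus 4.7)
The plan is to decompose $J_{\ln}$ into three pieces, namely $u\mapsto \tfrac12\mathcal{E}_\omega(u,u)$, $u\mapsto \tfrac{k-2\lambda}{4}\int_\Omega u^2dx$, and the logarithmic piece $u\mapsto -\tfrac{k}{4}F(u)$ with $F(u):=\int_\Omega u^2\ln u^2\,dx$. The first two are standard continuous quadratic/linear forms on $\mathcal{H}_0^{\ln}(\Omega)$ whose Gateaux derivatives $\varphi\mapsto\mathcal{E}_\omega(u,\varphi)$ and $\varphi\mapsto\tfrac{k-2\lambda}{2}\int u\varphi\,dx$ depend continuously on $u$ via bilinearity and the embedding $\mathcal{H}_0^{\ln}(\Omega)\hookrightarrow L^2(\Omega)$. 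Thus the entire analysis reduces to the logarithmic term $F$.

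For the Gateaux derivative of $F$, apply the fundamental theorem of calculus to the scalar map $r\mapsto r^2\ln r^2$ (extended by $0$ at the origin) to obtain
\begin{equation*}
\frac{F(u+t\varphi)-F(u)}{t}=\int_0^1\int_\Omega 2(u+\tau t\varphi)\varphi\bigl[\ln(u+\tau t\varphi)^2+1\bigr]dx\,d\tau,
\end{equation*}
and pass $t\to 0$ by dominated convergence. A majorant is built by splitting $\Omega$ into $\{|u|\le 1\}$ and $\{|u|>1\}$: on the former the integrand is bounded by $C(|\varphi|+\varphi^2)$ since $r\ln r^2$ is bounded near $0$; on the latter it is controlled by $C|u\varphi\ln|u||$ plus lower-order terms, which are in $L^1(\Omega)$ by Lemma \ref{dyld}. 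This yields the claimed formula $\langle F'(u),\varphi\rangle=4\int_\Omega u\varphi\ln|u|\,dx+2\int_\Omega u\varphi\,dx$. To verify that $J_{\ln}'(u)\in\mathcal{B}(\mathcal{H}_0^{\ln}(\Omega),\mathbb{R})$, linearity in $\varphi$ reduces the task to a uniform bound on the unit ball; replaying the estimates in the proof of Lemma \ref{dyld} and invoking Pitt's inequality (Proposition \ref{pit}) to control $\int\varphi^2\ln|\varphi|$ when $\|\varphi\|_{\mathcal{H}_0^{\ln}}\le 1$ delivers $|\langle J_{\ln}'(u),\varphi\rangle|\le C_u\|\varphi\|_{\mathcal{H}_0^{\ln}}$.

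The main obstacle is the continuity of $u\mapsto J_{\ln}'(u)$ into the dual, i.e.\ for $u_n\to u$ in $\mathcal{H}_0^{\ln}(\Omega)$ to show
\begin{equation*}
\sup_{\|\varphi\|_{\mathcal{H}_0^{\ln}}\le 1}\Bigl|\int_\Omega \varphi\bigl(u_n\ln|u_n|-u\ln|u|\bigr)dx\Bigr|\longrightarrow 0.
\end{equation*}
I would argue by contradiction: were the supremum bounded below by some $\delta>0$ along a subsequence, pick witnesses $\varphi_n$ on the unit sphere of $\mathcal{H}_0^{\ln}(\Omega)$, extract (by reflexivity and the compact embedding $\mathcal{H}_0^{\ln}(\Omega)\hookrightarrow L^2(\Omega)$) a subsequence with $\varphi_n\rightharpoonup\varphi_\infty$ weakly in $\mathcal{H}_0^{\ln}(\Omega)$ and strongly in $L^2(\Omega)$, and use Lemma \ref{kzsl} to obtain common pointwise majorants $v\ge|u_n|,|u|$ and $\psi\ge|\varphi_n|,|\varphi_\infty|$ in $\mathcal{H}_0^{\ln}(\Omega)$. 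Dominated convergence applied to $\varphi_n u_n\ln|u_n|$, with $L^1$-majorant built from $\psi\, v\,|\ln v|$ plus a bounded remainder exactly as in the proof of Lemma \ref{dyld}, yields $\int\varphi_n u_n\ln|u_n|\,dx\to\int\varphi_\infty u\ln|u|\,dx$, while Lemma \ref{slx1} treats $\int\varphi_n u\ln|u|\,dx\to\int\varphi_\infty u\ln|u|\,dx$; combining the two contradicts the choice of the subsequence.

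The technical heart of the argument is the simultaneous extraction of common Lebesgue majorants for the two sequences $(u_n)$ and $(\varphi_n)$ that survive integration against the logarithmic weight. Once this is handled through Lemma \ref{kzsl} and Pitt's inequality, Gateaux differentiability combined with continuity of the Gateaux derivative upgrades to Fr\'echet differentiability, so $J_{\ln}\in C^1(\mathcal{H}_0^{\ln}(\Omega))$.
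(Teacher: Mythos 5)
Your computation of the G\^ateaux derivative and the verification that $J_{\ln}'(u)$ is a bounded linear functional follow essentially the paper's route: the paper likewise differentiates $t\mapsto(u+t\varphi)^2\ln(u+t\varphi)^2$ under the integral sign with the majorant $2(|u|+|\varphi|)|\varphi|\ln(|u|+|\varphi|)^2+2(|u|+|\varphi|)|\varphi|\in L^1(\Omega)$ (integrable by the argument of Lemma \ref{dyld}), and then invokes Lemma \ref{slx1} for continuity in the test function. Up to that point your argument is sound and matches the paper.

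The gap is in your continuity argument for $u\mapsto J_{\ln}'(u)$. You choose witnesses $\varphi_n$ on the unit sphere, extract a weakly convergent subsequence, and then invoke Lemma \ref{kzsl} to produce a common majorant $\psi\in\mathcal{H}_0^{\ln}(\Omega)$ with $|\varphi_n|\le\psi$. But Lemma \ref{kzsl} requires \emph{strong} convergence in $\mathcal{H}_0^{\ln}(\Omega)$: its proof extracts a subsequence with $\|v_{j+1}-v_j\|\le 2^{-j}$ and sets $\psi=|v_1|+\sum_{j}|v_{j+1}-v_j|$, which needs the series to converge in norm. A sequence that is merely bounded (or weakly convergent) in $\mathcal{H}_0^{\ln}(\Omega)$ admits no such subsequence in general, and the $L^2$-version of the lemma (which does apply, since $\varphi_n\to\varphi_\infty$ in $L^2(\Omega)$ by the compact embedding) only yields a majorant $\psi\in L^2(\Omega)$. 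That is not enough: the $L^1$-domination of $\varphi_n u_n\ln|u_n|$ in the style of Lemma \ref{dyld} requires $\int_{\{\psi>M\}}\psi^2\ln|\psi|\,dx<\infty$, which comes from Pitt's inequality and hence needs $\psi\in\mathcal{H}_0^{\ln}(\Omega)$. So the dominated-convergence step for $\int_\Omega\varphi_n u_n\ln|u_n|\,dx$ is not justified as written. For what it is worth, the paper itself does not establish operator-norm continuity either: its proof only shows $\langle J_{\ln}'(u_n),\psi\rangle\to\langle J_{\ln}'(u),\psi\rangle$ for each \emph{fixed} $\psi$ via Lemma \ref{ryd}, i.e.\ convergence against individual test functions. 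If you only aim to reproduce the paper's argument you can stop there; if you insist on genuine norm continuity of the derivative you need an estimate that is uniform over the unit ball in $\varphi$ (for instance, Pitt's inequality applied to $\varphi$ combined with a quantitative convergence of $u_n\ln|u_n|$ in a dual norm), not a pointwise majorant for the witness sequence.
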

	
	\begin{proof}
		For $u \in \mathcal{H}_0^{\ln}(\Omega),\varphi \in \mathcal{H}_0^{\ln}(\Omega),$ it is not hard to see that
		\[\begin{aligned}
			\lim\limits_{t\rightarrow 0}\frac{J_{\ln}\left(u+t\varphi\right)-J_{\ln}(u)}{t}=&\mathcal{E}_{\omega}(u,\varphi)-\lambda \int_{\Omega}  \varphi udx+\frac{k}{2}\int_{\Omega}  \varphi udx\\&-\frac{k}{4}\lim\limits_{t\rightarrow 0}\int_{\Omega}\frac{\left(u+t\varphi\right)^2\ln\left(u+t\varphi\right)^2-u^2\ln u^2}{t}dx.
		\end{aligned}\]
		Note that 
		\[
		\Big|\frac{\partial \left(u+t\varphi\right)^2\ln\left(u+t\varphi\right)^2}{\partial t}\Big|\le 2\left(|u|+\left|\varphi\right|\right)\left|\varphi\right|\ln\left(|u|+\left|\varphi\right|\right)^2+2\left(|u|+\left|\varphi\right|\right)\left|\varphi\right|\in L^1,
		\]		
		Then by the dominated convergence theorem we have 
		\[\lim\limits_{t\rightarrow 0}\int_{\Omega}\frac{\left(u+t\varphi\right)^2\ln\left(u+t\varphi\right)^2-u^2\ln u^2}{t}dx=2\int_{\Omega}\varphi u \ln u^2dx+2\int_{\Omega}\varphi udx.\]
		Thus we obtain 
		\[\left \langle J_{\ln}^{\prime} (u),\varphi \right \rangle =\mathcal{E}_{\omega}(u,\varphi)-\lambda \int_{\Omega}  \varphi u dx-k\int_{\Omega} \varphi u \ln|u|dx.\]
		By lemma \ref{slx1} and lemma \ref{ryd}, one has that  as $n\to+\infty$  
		\[\lim\limits_{n\rightarrow \infty}\left|\left \langle J_{\ln}^{\prime} (u),\psi_n \right \rangle-\left \langle J_{\ln}^{\prime} (u),\psi \right \rangle  \right|=0 \quad\text{as} \:\: \psi_n\rightarrow \psi \:\text{in}\:\mathcal{H}_0^{ln}(\Omega )\]
		and as $n\rightarrow +\infty$
		\[ \left|\left \langle J_{\ln}^{\prime} (u_n),\psi\right \rangle-\left \langle J_{\ln}^{\prime} (u),\psi \right \rangle  \right|\to 0\quad\text{as} \:\: u_n\rightarrow u\ \:\text{in}\:\mathcal{H}_0^{ln}(\Omega ).\]
		Thus, we complete the above proof.
	\end{proof}
	
	\subsection{ The Brezis-Lieb Type Lemma}
	The following Brezis-Lieb type lemma for $u^2\ln u^2$ is important.
	
	\begin{lemma}\label{im}
		Let $\left\{u_n\right\}$ be uniform bounded sequence in $\mathcal{H}_0^{\ln}(\Omega)$ such that $u_n \rightarrow u \: a.e.\:$ in $\mathbb{ R }^N$ as $n\to+\infty$.  Then $u^2\ln u^2 \in L^1(\mathbb{ R }^N )$ and 
		\begin{equation}\label{zybds}
			\lim_{n\to\infty}\int_{\mathbb{R}^{N}}\big[u_{n}^{2}\ln u_{n}^{2}-|u_{n}-u|^{2}\ln|u_{n}-u|^{2}\big]dx=\int_{\mathbb{R}^{N}}u^{2}\ln u^{2}dx.
		\end{equation}
	\end{lemma}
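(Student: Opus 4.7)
The plan follows the classical Brezis--Lieb strategy: combine pointwise convergence with a dominating inequality, then close the gap via a truncation argument; Pitt's inequality (\ref{pitt}) provides the $L^1$ bounds that play the role of the uniform $L^p$ bounds of the standard setting. First I would identify $u$ as the weak limit in $\mathcal{H}_0^{\ln}(\Omega)$: the uniformly bounded sequence $\{u_n\}$ admits, up to a subsequence, a weak limit $\tilde u\in\mathcal{H}_0^{\ln}(\Omega)$, and the compact embedding $\mathcal{H}_0^{\ln}(\Omega)\hookrightarrow L^2(\Omega)$ together with the a.e.\ convergence $u_n\to u$ force $\tilde u=u$; hence $u\in\mathcal{H}_0^{\ln}(\Omega)$. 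Pitt's inequality then bounds $\int_\Omega u^2\ln|u|\,dx$ from above, and the pointwise bound $t^2\ln t^2\ge -2/e$ together with $|\Omega|<\infty$ bounds it from below, establishing $u^2\ln u^2\in L^1(\Omega)$.

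Setting $v_n:=u_n-u$, pointwise convergence of the integrand to $u^2\ln u^2$ is immediate: on $\{u\ne 0\}$, continuity of $t\mapsto t^2\ln t^2$ (extended by $0$ at the origin) together with $v_n\to 0$ and $u_n\to u$ yields $v_n^2\ln v_n^2\to 0$ and $u_n^2\ln u_n^2\to u^2\ln u^2$; on $\{u=0\}$ one has $u_n\equiv v_n$ so the difference vanishes identically.

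The heart of the proof is the elementary pointwise inequality: for every $\varepsilon>0$ there exists $C_\varepsilon>0$ such that for all $a,b\in\mathbb{R}$,
\[
\bigl|(a+b)^2\ln(a+b)^2 - a^2\ln a^2 - b^2\ln b^2\bigr| \le \varepsilon\bigl(a^2|\ln a^2|+a^2\bigr) + C_\varepsilon\bigl(b^2|\ln b^2|+b^2+|b|\bigr).
\]
I would verify it by case analysis on $|b|/|a|$. When $|b|\le \eta|a|$ with $\eta=\eta(\varepsilon)$ small, a Taylor expansion of $f(t):=t^2\ln t^2$ using $f'(t)=2t(\ln t^2+1)$ and $f''(t)=2\ln t^2+6$ gives $|f(a+b)-f(a)|\le C|a||b|(|\ln a^2|+1)$; Young's inequality distributes the product, and the leftover $b^2|\ln a^2|$ is handled by the subcases $|a|\le e^{1/\varepsilon}$ (so $|\ln a^2|\le 2/\varepsilon$) and $|a|>e^{1/\varepsilon}$ (so $b^2|\ln a^2|\le\eta^2 a^2|\ln a^2|\le\varepsilon a^2|\ln a^2|$ after the choice $\eta=\sqrt\varepsilon$), which absorbs everything into the right-hand side. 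When $|b|\ge\eta|a|$, the bounds $|a+b|,|a|\le(1+\eta^{-1})|b|$ together with the pointwise comparison $t^2|\ln t^2|\le C_\eta(b^2|\ln b^2|+b^2+|b|)$ for $|t|\le C_\eta|b|$ already reduce the left-hand side to $C_\varepsilon(b^2|\ln b^2|+b^2+|b|)$.

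Given the inequality, set $a=v_n$, $b=u$ and define $H_n:=|u_n^2\ln u_n^2 - v_n^2\ln v_n^2 - u^2\ln u^2|$, so that $H_n\to 0$ a.e.\ and
\[
H_n \le \varepsilon\bigl(v_n^2|\ln v_n^2|+v_n^2\bigr) + C_\varepsilon\bigl(u^2|\ln u^2|+u^2+|u|\bigr).
\]
Since $\|v_n\|_{\mathcal{H}_0^{\ln}(\Omega)}\le \|u_n\|_{\mathcal{H}_0^{\ln}(\Omega)}+\|u\|_{\mathcal{H}_0^{\ln}(\Omega)}\le C$, Pitt's inequality applied to each $v_n$ yields $\int_\Omega(v_n^2|\ln v_n^2|+v_n^2)\,dx\le M$ uniformly in $n$, while the first step together with the boundedness of $\Omega$ gives $u^2|\ln u^2|+u^2+|u|\in L^1(\Omega)$. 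The truncation $G_n:=\max\bigl(H_n-\varepsilon(v_n^2|\ln v_n^2|+v_n^2),0\bigr)$ satisfies $G_n\to 0$ a.e.\ and is dominated by $C_\varepsilon(u^2|\ln u^2|+u^2+|u|)\in L^1(\Omega)$, so dominated convergence forces $\int G_n\to 0$; hence $\int H_n\le\int G_n+\varepsilon M$, and letting $n\to\infty$ then $\varepsilon\to 0$ concludes the proof. The main obstacle is the verification of the elementary inequality, because the logarithm is not homogeneous and the cross term $b^2|\ln a^2|$ produced by Young's inequality must be eliminated via a careful comparison between the magnitudes of $|a|$ and $|b|$.
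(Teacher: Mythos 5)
Your proposal is correct, but it takes a genuinely different route from the paper. The paper decomposes $t^{2}\ln t^{2}=G(t)-F(t)$ with $F(t)=-t^{2}\ln t^{2}$ for $0\le t\le e^{-2}$, $F(t)=4t^{2}$ for $t\ge e^{-2}$, so that $F$ and $G$ are \emph{convex}, nonnegative and vanish at $0$; it then invokes the convex-function version of the Brezis--Lieb lemma (Theorem~2 of \cite{brezis1983relation}, stated as Lemma~\ref{bl}) as a black box for each of $F$ and $G$ separately, with Pitt's inequality supplying the required uniform $L^{1}$ bounds, and subtracts. You instead re-run the original Brezis--Lieb mechanism (their Theorem~1) directly on the non-convex integrand $j(t)=t^{2}\ln t^{2}$: you prove the $\varepsilon$-perturbed subadditivity inequality $|j(a+b)-j(a)-j(b)|\le\varepsilon(a^{2}|\ln a^{2}|+a^{2})+C_{\varepsilon}(b^{2}|\ln b^{2}|+b^{2}+|b|)$ by hand and then apply the standard truncation $G_{n}=(H_{n}-\varepsilon\varphi_{n})^{+}$ plus dominated convergence. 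Your route is self-contained and avoids the convex splitting, at the cost of the case analysis for the elementary inequality; the paper's route is shorter but leans on the cited lemma and on checking its hypotheses (ii)--(iii) for $F$ and $G$. Both arguments use Pitt's inequality in the same essential way, namely to get uniform $L^{1}$ control of $v_{n}^{2}|\ln v_{n}^{2}|$ (for this you should note explicitly that Pitt bounds $\int v_{n}^{2}\ln|v_{n}|$ only from above, and that the negative part of $v^{2}\ln v^{2}$ is pointwise bounded by $2/e$ on the bounded set $\Omega$, which upgrades the bound to one on the absolute value). One small imprecision in your case analysis: in the regime $|b|\le\eta|a|$ the subcase ``$|a|\le e^{1/\varepsilon}$ hence $|\ln a^{2}|\le 2/\varepsilon$'' fails for very small $|a|$; this is harmless because your other mechanism, $b^{2}|\ln a^{2}|\le\eta^{2}a^{2}|\ln a^{2}|\le\varepsilon a^{2}|\ln a^{2}|$ with $\eta=\sqrt{\varepsilon}$, already absorbs the cross term for \emph{every} $a$ in that regime, so the stated inequality survives.
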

	To prove lemma \ref{im}, we need the following Brezis-Lieb's lemma, see Theorem 2 and example (b) in \cite{brezis1983relation}.
	
	\begin{lemma}\label{bl}
		Suppose that $j : \mathbb{ R } \rightarrow \mathbb{ R }$ is a continuous, convex function with $j(0) = 0$ and let $f_n=f+g_n$ be a sequence of measurable functions from $\mathbb{ R }^N \rightarrow \mathbb{ R } $ such that
		
		(i) $g _ { n } \rightarrow 0$ a.e. in $\mathbb { R } ^ { N } $. 
		
		(ii) $j ( M f )$ is in $L ^ { 1 } ( \mathbb { R } ^ { N })$ for every real $M $. 
		
		(iii) There exists some fixed $k > 1$ such that $\left\{ j \left( k g _ { n } \right) - k j \left( g _ { n } \right) \right\}$ is uniformly bounded in $L ^ { 1 } ( \mathbb { R } ^ { N }).$
		Then 
		\[\lim _ { n \rightarrow \infty } \int _ { \mathbb { R } ^ { N } } \left| j \left( f + g _ { n } \right) - j \left( g _ { n } \right) - j ( f ) \right| dx= 0 .\] 
	\end{lemma}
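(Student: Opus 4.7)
The result is the classical Brezis--Lieb lemma from \cite{brezis1983relation}. The plan is to combine pointwise a.e.\ convergence to zero of the integrand with a convexity-based truncation argument. Since $g_n \to 0$ a.e.\ and $j$ is continuous with $j(0)=0$, the integrand
$$\Phi_n(x):=j(f(x)+g_n(x)) - j(g_n(x)) - j(f(x))$$
converges to zero a.e.\ in $\mathbb{R}^N$. The only obstruction to promoting this to $L^1$-convergence is that the cross terms in $j(a+b)$ need not be separately integrable, so a mere dominated-convergence argument applied to $\Phi_n$ directly will fail.

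The key analytic step is a convexity-based pointwise bound that decouples the $a$ and $b$ dependence with an adjustable prefactor: for every $\varepsilon>0$ there exist constants $C_\varepsilon>0$ and $M=M(\varepsilon)>0$ such that
$$|j(a+b)-j(b)-j(a)| \;\le\; \varepsilon\bigl(j(kb)-kj(b)\bigr) + C_\varepsilon\, |j(Ma)|,\qquad a,b\in\mathbb{R},$$
with the same $k>1$ fixed in hypothesis (iii). Convexity of $j$ together with $j(0)=0$ yields $j(tx)\le t\,j(x)$ for $t\in[0,1]$ (applied to the convex combination $tx=t\cdot x+(1-t)\cdot 0$), which provides the scaling needed to compare $j(a+b)$ with $j(kb)$ and $j(Ma)$. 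In particular, $j(kb)-kj(b)\ge 0$ and measures the super-linearity gap of $j$ at $b$. One then splits into two regimes: in $|a|\le R|b|$, the super-linearity gap is large enough to absorb the cross term with a small prefactor $\varepsilon$; in $|a|>R|b|$, the term $|j(Ma)|$ dominates once $R=R(\varepsilon)$ is chosen large enough. In each regime the bound is deduced from convex combinations of $j$ applied at $a/t$, $b/(1-t)$, and $0$ with carefully chosen weights.

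With this inequality in hand, apply it pointwise with $a=f(x)$ and $b=g_n(x)$, and define the truncation
$$W_n^\varepsilon(x) \;:=\; \bigl(\,|\Phi_n(x)| - \varepsilon\,(j(kg_n(x))-kj(g_n(x)))\,\bigr)_+.$$
Then $0\le W_n^\varepsilon \le C_\varepsilon |j(Mf)|$ uniformly in $n$, and the right-hand side lies in $L^1(\mathbb{R}^N)$ by hypothesis (ii). Moreover $W_n^\varepsilon \to 0$ a.e.\ because both $\Phi_n\to 0$ and $j(kg_n)-kj(g_n)\to 0$ a.e.\ (using $g_n\to 0$ a.e.\ and continuity of $j$). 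Lebesgue's dominated convergence theorem therefore yields $\int_{\mathbb{R}^N} W_n^\varepsilon\,dx \to 0$ as $n\to\infty$.

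To conclude, hypothesis (iii) controls the remaining tail uniformly in $n$:
$$\int_{\mathbb{R}^N}|\Phi_n|\,dx \;\le\; \int_{\mathbb{R}^N} W_n^\varepsilon\,dx \;+\; \varepsilon \int_{\mathbb{R}^N}\bigl(j(kg_n)-kj(g_n)\bigr)dx \;\le\; \int_{\mathbb{R}^N} W_n^\varepsilon\,dx + \varepsilon\, C.$$
Sending first $n\to\infty$ gives $\limsup_n \int |\Phi_n|\,dx \le \varepsilon C$, and then $\varepsilon\to 0^+$ produces the desired conclusion. The main obstacle is clearly the convexity inequality of Step~2: the delicate point is producing an arbitrarily small coefficient $\varepsilon$ in front of the $b$-term, since any fixed prefactor would prevent the final $\varepsilon\to 0$ step. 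The regime split by the ratio $|a|/|b|$ together with the scaling inequality $j(tx)\le t\,j(x)$ for $t\in[0,1]$ is exactly what makes this prefactor tunable.
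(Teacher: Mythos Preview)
The paper does not supply its own proof of this lemma: it is simply quoted as Theorem~2 and example~(b) of Br\'ezis--Lieb \cite{brezis1983relation}, to be used as a black box in the proof of Lemma~\ref{im}. Your proposal therefore goes beyond what the paper does, by reconstructing the original Br\'ezis--Lieb argument.

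Your outline is the correct one and matches the strategy in \cite{brezis1983relation}: pointwise a.e.\ convergence of $\Phi_n$, the decoupling inequality
\[
|j(a+b)-j(b)-j(a)|\le \varepsilon\bigl(j(kb)-kj(b)\bigr)+C_\varepsilon\,|j(Ma)|,
\]
then the truncation $W_n^\varepsilon$ dominated by $C_\varepsilon|j(Mf)|\in L^1$ and the final $\varepsilon\to 0$ step using hypothesis~(iii). The one place where your sketch is genuinely incomplete is the proof of the decoupling inequality itself: you correctly identify the ingredients (the sublinearity $j(tx)\le t\,j(x)$ for $t\in[0,1]$, nonnegativity of $j(kb)-kj(b)$, and a regime split on $|a|/|b|$), but the actual derivation in \cite{brezis1983relation} requires a specific choice of convex combinations and some case analysis that you have not written out. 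As a proof \emph{plan} this is fine; as a self-contained proof it would need that step filled in.
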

	
	\noindent\textbf{Proof of Lemma \ref{im}: }
Let  
	\[F ( s )  = \left\{ \begin{array} { l l } - s ^ { 2 } \ln s ^ { 2 }   \:\:&\text{if} \:\:0 \leqslant s \leqslant e^ { - 2 } \\[1.5mm] 4 s ^ { 2 } \:\:&\text{if} \:\:s \geqslant e ^ { - 2 } \end{array} \right.\quad \text{and} \quad G ( s ) = s ^ { 2 } \ln s ^ { 2 } + F ( s ). \]
Here for $s=0$, the value takes the limit as $s\to0^+$. 
	
	Obviously, $F , G$ are continuous nonnegative, convex, increasing functions on $( 0 , + \infty )$ with $F ( 0 ) = 0 $, $G ( 0 ) = 0$ and there exist $C> 0$ such that 
	\begin{equation}\label{ada}
		\max\big\{| F ( s ) |,| G ( s ) |\big\} \leqslant C (1+s^2\ln s^2 ).
	\end{equation}
	Since $\left\{ u _ { n } \right\}$ is bounded in $\mathcal{H}_0^{\ln}(\Omega),$ by (\ref{pitt}) we deduce that $\left\{ G \left( \left| u _ { n } \right| \right) \right\}$ and $\left\{ F \left( \left| u _ { n } \right| \right) \right\}$ are  bounded in $L ^ { 1 } ( \mathbb { R } ^ { N }).$  According to Fatou's lemma, we have \[\int _ { \mathbb { R } ^ { N } } F ( | u | ) d x \leqslant \lim _ { n \rightarrow \infty } \int _ { \mathbb { R } ^ { N } } F \left( \left| u _ { n } \right| \right) dx,\quad  \int _ { \mathbb { R } ^ { N } } G ( | u | ) dx \leqslant \lim _ { n \rightarrow \infty } \int _ { \mathbb { R } ^ { N } } G \left( \left| u _ { n } \right| \right) d x .\] 
	Thus, 
	\[\int _ { \mathbb { R } ^ { N } } u ^ { 2 } \ln u ^ { 2 } d x = \int _ { \mathbb { R } ^ { N } } G ( | u | ) d x - \int _ { \mathbb { R } ^ { N } } F ( | u | ) d x < + \infty .\] 
	This implies that $u ^ { 2 } \ln u ^ { 2 } \in L ^ { 1 } ( \mathbb { R } ^ { N }).$ 
	Since $s ^ { 2 } \ln s ^ { 2 } = G ( s ) - F ( s ) $, it is enough to apply the Brezis-Lieb's lemma \ref{bl} to the functions $F$ and $G$.
	
	Note that by (\ref{ada}) that $F , G$ satisfying $( i i )$ of Lemma \ref{bl}. Since $\left\{ F \left( \left| u _ { n } \right| \right) \right\} ,$ $\left\{ G\left( \left| u _ { n } \right| \right) \right\}$ are bounded in $L ^ { 1 } ( \mathbb { R } ^ { N }),$ it follows by (\ref{ada}) again that Lemma \ref{bl} $( i i i )$ hold for $F , G$ with $k = 2 $. Note that $G(s)=G\left(|s|\right),F(s)=F\left(|s|\right)$, so
	\[\lim _ { n \rightarrow \infty } \int _ { \mathbb { R } ^ { N } } \left| F \left( \left| u _ { n } \right| \right) - F \left( \left| u _ { n } - u \right| \right) - F ( | u | ) \right| d x = 0\] 
	and 
	\[\lim _ { n \rightarrow \infty } \int _ { \mathbb { R } ^ { N } } \left| G \left( \left| u _ { n } \right| \right) - G \left( \left| u _ { n } - u \right| \right) - G ( | u | ) \right| d x = 0.\] 
Thus,  the desired equality (\ref{zybds}) follows.\hfill$\Box$\medskip
	
	\begin{lemma}\label{jm}
		Let $\left\{u_n\right\}$ be uniform bounded sequence in $\mathcal{H}_0^{\ln}(\Omega)$ such that $u_n \rightarrow u \: \  a.e.\:$ in $\mathbb{ R }^N$ as $ n\to+\infty$, then up to a subsequence, it holds that
		\[\begin{aligned}
			&	\iint_{x,y\in\mathbb{R}^N}\frac{|u_n(x)-u_n(y)|^2}{|x-y|^N}\omega \left(|x-y|\right)\:dx\:dy\\=&\iint_{x,y\in\mathbb{R}^N}\frac{|u_n(x)-u\left(x\right)-u_n(y)+u(y)|^2}{|x-y|^N}\omega \left(|x-y|\right)\:dx\:dy\\&+\iint_{x,y\in\mathbb{R}^N}\frac{|u(x)-u(y)|^2}{|x-y|^N}\omega \left(|x-y|\right)\:dx\:dy+o\left(1\right)\quad {\rm as}\ \,  n\to+\infty.
		\end{aligned}\]
	\end{lemma}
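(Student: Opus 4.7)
The plan is to set $v_n := u_n - u$ and expand the integrand pointwise,
\[|u_n(x)-u_n(y)|^2 = |u(x)-u(y)|^2 + |v_n(x)-v_n(y)|^2 + 2\bigl(u(x)-u(y)\bigr)\bigl(v_n(x)-v_n(y)\bigr).\]
Integrating against the positive measure $\frac{\omega(|x-y|)}{|x-y|^N}\,dx\,dy$ (equivalently, $d_N^{-1} J(x-y)\,dx\,dy$) reproduces the two quadratic terms in the claimed identity and leaves a cross term equal to a fixed constant times $\mathcal{E}_\omega(u, v_n)$. Consequently, after passage to a suitable subsequence, the entire assertion reduces to showing $\mathcal{E}_\omega(u, v_n) \to 0$.

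First I would verify that $u \in \mathcal{H}_0^{\ln}(\Omega)$: the pointwise a.e.\ convergence forces $u \equiv 0$ in $\mathbb{R}^N \setminus \Omega$, while Fatou's lemma applied to the nonnegative integrand $|u_n(x)-u_n(y)|^2 J(x-y)$ gives $\mathcal{E}_\omega(u,u) \le \liminf_n \mathcal{E}_\omega(u_n,u_n) < \infty$. Next, I would extract a subsequence along which $u_n \to u$ in $L^2(\Omega)$ by combining the compact embedding $\mathcal{H}_0^{\ln}(\Omega) \hookrightarrow L^2(\Omega)$ from \cite{feulefack2023logarithmic} with the a.e.\ convergence used to identify the limit; in particular $v_n \to 0$ in $L^2(\mathbb{R}^N)$. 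Since $\{v_n\}$ is bounded in the Hilbert space $\mathcal{H}_0^{\ln}(\Omega)$, reflexivity furnishes a further subsequence with $v_n \rightharpoonup v^{*}$ weakly, and the uniqueness of distributional limits forces $v^{*} = 0$, so $v_n \rightharpoonup 0$ in $\mathcal{H}_0^{\ln}(\Omega)$.

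The conclusion then follows because $\mathcal{E}_\omega(u, \cdot)$ is a bounded linear functional on $\mathcal{H}_0^{\ln}(\Omega)$ by Cauchy--Schwarz applied to the bilinear form, so the weak convergence $v_n \rightharpoonup 0$ yields $\mathcal{E}_\omega(u, v_n) \to 0$; substituting this into the expansion above produces the stated identity with an $o(1)$ remainder. The main (and only real) obstacle is that a.e.\ convergence alone does not imply weak $\mathcal{H}_0^{\ln}(\Omega)$-convergence; this gap is bridged by the uniform $\mathcal{H}_0^{\ln}$-bound together with the compact embedding into $L^2(\Omega)$, which makes essential use of the assumption that $\Omega$ is bounded with Lipschitz boundary. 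An equivalent alternative would be to apply the Brezis--Lieb Lemma \ref{bl} with $j(s) = s^2$ on the product space $\mathbb{R}^{2N}$ equipped with the measure $d\mu := J(x-y)\,dx\,dy$ to the functions $F_n(x,y) := u_n(x) - u_n(y)$, noting that $F_n \to u(x)-u(y)$ a.e.\ on $\mathbb{R}^{2N}$ and that $\{F_n\}$ is uniformly bounded in $L^2(d\mu)$.
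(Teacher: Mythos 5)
Your proposal is correct, but your primary argument takes a genuinely different (and more self-contained) route than the paper, which disposes of this lemma in one line by citing Brezis--Lieb \cite[Theorem 1]{brezis1983relation} applied on the product space $\mathbb{R}^{2N}$ with the measure $J(x-y)\,dx\,dy$ --- i.e.\ exactly the ``equivalent alternative'' you sketch at the end. Your main argument instead exploits the fact that for the exponent $2$ the Brezis--Lieb identity is exact pointwise, so everything reduces to the vanishing of the cross term $\mathcal{E}_\omega(u,v_n)$, which you obtain from weak convergence $v_n\rightharpoonup 0$ in the Hilbert space $\mathcal{H}_0^{\ln}(\Omega)$ (boundedness plus identification of the weak limit via the compact embedding into $L^2(\Omega)$ and the a.e.\ convergence). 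This buys transparency and avoids verifying hypotheses (ii)--(iii) of Lemma \ref{bl}; the price is that you must first check $u\in\mathcal{H}_0^{\ln}(\Omega)$ via Fatou (for both $\mathcal{E}_\omega(u,u)$ and, through the Poincar\'e inequality of \cite[Lemma 2.3]{feulefack2023logarithmic}, the $L^2$ bound), which you do. The paper's citation-based proof is shorter and works verbatim for non-Hilbertian exponents, but for $p=2$ your Hilbert-space argument is the more elementary one, and your closing remark recovers the paper's proof exactly.
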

	\begin{proof}
		The proof is a direct corollary of \cite[Theorem 1]{brezis1983relation}.
	\end{proof}

	\subsection{Uniform Bounds for Elements in the Nehari Maniford}
	
	Next we give some uniform estimates for every $u\in \mathcal{ N}_{\omega,s}$. 
	
	\begin{lemma}\label{fenshuns}
		Let $p_s$ and $\tau_s$ as defined in (\ref{psts1}) (\ref{psts2}) and satisfy (\ref{psdtj}) (\ref{psdtj2}) when $2<p_s<2_s^*$. Then there exist  $C_1 = C_1 ( p ,N, \Omega ) > 0,\ C_2 = C_2 ( p ,N, \Omega )>0$ such that  $$\|u\|_{p_s}\ge C_1,\quad \| u \| _ {\omega, s } \ge C_2\quad \text{for all $u \in \mathcal { N } _ {\omega,s }$  and $s \in ( 0 , s_0 ],$}$$
		 where $s_0<\min\big\{1,\frac { N } { 4 }\big\}.$ 
	\end{lemma}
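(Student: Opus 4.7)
The plan is to derive two opposing estimates for $\|u\|_{p_s}^{p_s}$ starting from the Nehari identity $\|u\|_{\omega,s}^{2} = \|u\|_{p_s}^{p_s} + \tau_{s}\|u\|_{2}^{2}$: a lower bound in terms of $\|u\|_{\omega,s}^{2}$ produced by the first Dirichlet eigenvalue $\lambda_{1,s}^{\omega}$, and an upper bound in terms of $\|u\|_{\omega,s}^{p_{s}}$ produced by Hölder's inequality and the fractional pseudo-relativistic Schrödinger Sobolev inequality (Theorem~\ref{sobolev}). Concretely, using $\|u\|_{2}^{2} \leq \|u\|_{\omega,s}^{2}/\lambda_{1,s}^{\omega}$ together with $\tau_{s} < \lambda_{1,s}^{\omega}$ gives
\[
\|u\|_{p_s}^{p_s} \;\geq\; c_{s}\,\|u\|_{\omega,s}^{2}, \qquad c_{s} := 1 - \frac{\tau_{s}^{+}}{\lambda_{1,s}^{\omega}} \in (0,1],
\]
while Hölder (valid since $p_{s} \leq 2_{s}^{\ast}$ and $\Omega$ is bounded) combined with Theorem~\ref{sobolev} gives
\[
\|u\|_{p_s}^{p_s} \;\leq\; |\Omega|^{\,1 - p_{s}/2_{s}^{\ast}}\|u\|_{2_s^{\ast}}^{p_s} \;\leq\; C_{s}\,\|u\|_{\omega,s}^{p_s}, \qquad C_{s} := |\Omega|^{\,1 - p_{s}/2_{s}^{\ast}}\, \kappa_{N,s}^{p_s/2}.
\]
Combining the two yields $\|u\|_{\omega,s}^{p_{s} - 2} \geq c_{s}/C_{s}$.

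The hard part is promoting this to a uniform lower bound $\|u\|_{\omega,s} \geq C_{2}>0$ valid for all $s \in (0, s_{0}]$. Because $p_{s} \to 2$ as $s \to 0^{+}$, the exponent $1/(p_{s}-2)$ blows up, so only the pointwise convergence $c_{s}/C_{s} \to 1$ is insufficient; a factor slightly below $1$ raised to a huge power would collapse the bound. The resolution is a quantitative expansion showing that $\ln(c_{s}/C_{s})$ vanishes at exactly the same rate as $p_{s} - 2$ near $s = 0$. Using the expansion $\kappa_{N,s} = 1 + a_{N}s + o(s)$ from (\ref{knsdedaxiao}), the Taylor expansions $p_{s} - 2 = p'(0)s + o(s)$ and $1 - p_{s}/2_{s}^{\ast} = \bigl(\tfrac{2}{N} - \tfrac{p'(0)}{2}\bigr)s + o(s)$ coming from the $C^{1}$-assumption (\ref{psdtj}), the hypothesis $\tau_{s} = o(s)$ from (\ref{psdtj2}), and $\lambda_{1,s}^{\omega} \to 1$ from (\ref{diyige}), one obtains $\ln c_{s} = o(s)$ and $\ln C_{s} = O(s)$, so
\[
\ln \|u\|_{\omega,s} \;\geq\; \frac{\ln(c_{s}/C_{s})}{p_{s}-2} \;=\; \frac{O(s)}{p'(0)\, s + o(s)} \;=\; O(1),
\]
with an implicit constant depending only on $p, N, \Omega$ (here $p'(0) > 0$ since $p_{s} > 2 = p(0)$ for $s \in (0, s_0]$). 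Shrinking $s_{0}$ if necessary, this produces the desired uniform lower bound $\|u\|_{\omega,s} \geq C_{2}$.

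The bound $\|u\|_{p_{s}} \geq C_{1}$ then follows at once by substituting $\|u\|_{\omega,s} \geq C_{2}$ back into $\|u\|_{p_{s}}^{p_{s}} \geq c_{s}\|u\|_{\omega,s}^{2}$ and observing that both $c_{s}$ and $p_{s}$ are uniformly bounded above and below for $s \in (0,s_{0}]$, so a uniform $p_{s}$-th root gives $\|u\|_{p_s}\geq C_1$. In the critical case $p_{s} = 2_{s}^{\ast}$ the factor $|\Omega|^{1-p_{s}/2_{s}^{\ast}}$ is trivial; moreover the constraint $1 < \tau_{s} < \lambda_{1,s}^{\omega}$, together with $\lambda_{1,s}^{\omega} \to 1$, forces $s$ to remain bounded away from zero, so $c_{s}/C_{s}$ admits a direct positive lower bound by continuity and compactness, bypassing the Taylor analysis above.
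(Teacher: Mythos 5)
Your argument is correct and is essentially the paper's own proof: both rest on combining the Nehari identity with the eigenvalue bound $\|u\|_2^2\le\|u\|_{\omega,s}^2/\lambda_{1,s}^{\omega}$, H\"older between $L^{p_s}$ and $L^{2_s^*}$, the Sobolev inequality of Theorem \ref{sobolev}, and the expansions $\kappa_{N,s}=1+a_Ns+o(s)$, $p_s-2=p'(0)s+o(s)$, $\tau_s=o(s)$ to show that the resulting threshold raised to the power $1/(p_s-2)$ stays bounded below as $s\to0^+$ — you merely derive the bound on $\|u\|_{\omega,s}$ first and deduce the one on $\|u\|_{p_s}$ afterwards, whereas the paper proceeds in the opposite order, and you phrase the asymptotics through logarithms rather than through limits of the individual factors. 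The only blemish is the final aside on the critical case: the constraint $1<\tau_s<\lambda_{1,s}^{\omega}$ does \emph{not} force $s$ to be bounded away from zero (the interval $(1,\lambda_{1,s}^{\omega})$ is nonempty for every $s$ by (\ref{1da})); the correct observation — consistent with the remark following the lemma in the paper — is that in the critical case the statement is only needed and only claimed for a fixed $s$, where the positive lower bound on $(c_s/C_s)^{1/(p_s-2)}$ is immediate.
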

	
	\begin{proof}
		Let $G_s:\mathcal{ H }_{\omega}^s(\Omega )\setminus \left\{0\right\}\rightarrow \mathbb{ R }$ be given by
		\[G_s(u)=\| u \| _ { \omega,s } ^ { 2 } - \|u\|_{ p _ { s }} ^ { p _s}-\tau_s\|u\|_2^2 .\]
		Firstly we consider $\tau_s\in \left(0,\lambda_{1,s}^{\omega}\right)$.  By (\ref{diyite}) and Theorem \ref{sobolev}, we have
		\[\begin{aligned}
			G_s(u)\ge& \| u \| _ { \omega,s } ^ { 2 }-\|u\|_{ p _ { s }} ^ { p _s}-\frac{\tau_s}{\lambda_{1,s}^{\omega}}\|u\|_{\omega,s}^2 
			\\[1mm]\ge &\frac{1}{\kappa _ { N , s }}\Big(1-\frac{\tau_s}{\lambda_{1,s}^{\omega}}\Big)\|u\|_{2_s ^ { * }}^2-\|u\|_{ p _ { s }} ^ { p _s}
			\\[1mm]\ge & \frac{1}{\kappa _ { N , s }}\big(1-\frac{\tau_s}{\lambda_{1,s}^{\omega}}\big)| \Omega | ^ { \frac { 2\left(p_s-2_s^{*}\right) } { 2_{s}^{*}p_s } }\|u\|_{p_s}^2-\|u\|_{ p _ { s }} ^ { p _s}	
			\\=&
			\| u \| _ { p_s} ^ { 2 }\Big(\frac{1}{\kappa _ { N , s }}\big(1-\frac{\tau_s}{\lambda_{1,s}^{\omega}}\big)| \Omega | ^ { \frac { 2 (p_s-2_s^{*} ) } { 2_{s}^{*}p_s } }-  \|u\|_{ p_s}^{p_s-2}\Big).
		\end{aligned}\]
		Let  $g ( t , s ) : = \frac{1}{\kappa _ { N , s }}\left(1-\frac{\tau_s}{\lambda_{1,s}^{\omega}}\right)| \Omega | ^ { \frac { 2\left(p_s-2_s^{*}\right) } { 2_{s}^{*}p_s } }-t^{p_s-2}$ where $\kappa _ { N , s }$ is given in (\ref{kns}). Then 
		\[g ( t , s ) > 0 \quad \text{if} \quad t < \Big\{\frac{1}{\kappa _ { N , s }}\big(1-\frac{\tau_s}{\lambda_{1,s}^{\omega}}\big)\Big\}^{\frac{1}{p_s-2}}\Big(| \Omega | ^ { \frac { 2\left(p_s-2_s^*\right) } { p_{s}2_{s}^{*}\left(p_s-2\right) } }\Big) .\] 
		Note that
		\[\frac { 2 _ { s } ^ { * } - p _ { s } } { 2 _ { s } ^ { * } \left( 2 - p _ { s } \right) }  = - \frac { 4 } { ( N - 2 s ) 2 _ { s } ^ { * } \int _ { 0 } ^ { 1 } p ^ { \prime } ( \tau s ) d \tau } + \frac { 1 } { 2 _ { s } ^ { * } } \rightarrow \frac { 1 } { 2 } - \frac { 2 } { N p ^ { \prime } ( 0 ) },s \rightarrow 0 ,\]
		therefore \[\lim _ { s \rightarrow 0 } | \Omega | ^ { \frac { 2\left(p_s-2_s^*\right) } { p_{s}2_{s}^{*}\left(p_s-2\right) } }= | \Omega | ^ { \frac { 1 } { 2 } - \frac { 2 } { N p ^ { \prime } ( 0 ) } } > 0 .\]
		Furthermore, by (\ref{knsdedaxiao}), 
		\[\lim _ { s \rightarrow 0 } \kappa _ { N , s } ^ { \frac { 1} { p_s-2} } = \lim _ { s \rightarrow 0 } \left( \kappa _ { N , s } ^ { \frac { 1 } { s } } \right) ^ { \frac { s  } { p_s-2 } } 
		= \Big( \frac { 1 } { 4 \pi } \big( \frac { \Gamma ( N ) } { \Gamma ( \frac { N } { 2 }  ) } \big) ^ { \frac { 2 } { N } } e ^ { - 2 \psi \left( \frac { N } { 2 } \right) } \Big) ^ { \frac { 1 } { p ^ { \prime } ( 0 ) } } > 0 .\]
		For $\tau_s \in \left(0,\lambda_{1,s}^{\omega}\right)$, by (\ref{diyige}) we yield
		\[\begin{aligned}
			\lim _ { s \rightarrow 0 } \Big(1 - \frac{\tau_s}{\lambda_{1,s}^{\omega}}\Big)^{\frac{1}{p_s-2}}=& \lim _ { s \rightarrow 0 } \exp\Big\{\frac{1}{p_s-2}\ln\big(1 - \frac{\tau_s}{\lambda_{1,s}^{\omega}}\big)\Big\}= 1.
		\end{aligned}\] 
		As a consequence, there is $C_1 = C_1 ( p ,N, \Omega ) > 0$ such that $G_ { s } ( u ) > 0$ if $\| u \| _ { p_s } \in ( 0 , C_1) $, and then $\| u \| _ { p_s } \ge C_1$ for all $u \in \mathcal { N } _ { \omega,s }$ and $s \in \left( 0 , s_0\right].$ Note that
		\[\|u\|_{ p _ { s }} ^ { p _s}\le | \Omega | ^ { \frac { 2_s ^ { * } - p _ { s } } { 2_{s}^{*} } }\|u\|_{2_s ^ { * } }^{p_s}\le | \Omega | ^ { \frac { 2_s ^ { * } - p _ { s } } { 2_{s}^{*} } }\kappa _ { N , s } ^ { \frac { p _ { s } } { 2 } } \|u\|_{\omega,s }^{p_s},\]
		thus 
		\[\|u\|_{\omega,s } \ge \|u\|_{p_s}| \Omega | ^ { \frac { p _ { s }-2_s ^ { * }  } { 2_{s}^{*}p_s } }\kappa _ { N , s } ^ { -\frac { 1} { 2 } }\ge C_{1} | \Omega | ^ { \frac { p _ { s }-2_s ^ { * }  } { 2_{s}^{*}p_s } }\kappa _ { N , s } ^ { -\frac { 1} { 2 } }. \]
		Since 
		\[0\ge\frac { p _ { s }-2_s ^ { * }  } { 2_{s}^{*}p_s }\ge \frac{1}{2}\big(1-\frac{2_s ^ { * }}{p_s}\big)\ge \frac{1}{2}\big(1-\frac{2_s^{*}}{2}\big)=\frac{s}{2s-N}\ge-\frac{1}{2},s\in \left(0,s_0\right],\]
		there exists $C_2=C_2\left(p,N,\Omega\right)>0$ such that $\|u\|_{\omega,s}> C_2.$
		
		For $\tau_s\le 0,$ the proof process is exactly the same as $\tau_s \in \left(0,\lambda_{1,s}^{\omega}\right).$
	\end{proof}\medskip
	
	Note that the above result holds uniformly for any $s \in \left(0,s_0\right].$ In particular, for fixed $2<p\le 2_s^{*},\:\tau <\lambda_{1,s}^{\omega},$ the above result also holds. 
	
	\begin{lemma}\label{fenshuzuida}
		Let  $u \in \mathcal{H}_{\omega}^{s}(\Omega)\backslash \{ 0 \} $  and $p_s,\tau_s$ be defined in (\ref{psts1}) (\ref{psts2}) satisfying (\ref{psdtj}) (\ref{psdtj2}) when $2<p_s<2_s^*$. Define
		\begin{equation}\label{aaaaa}
			t _ { u } ^ { s }  =  \left( \frac { \mathcal { E } _ { \omega ,s} ( u , u ) - \tau_s  \|u\|_2^2} {  \|u\| _ { p_s } ^ { p_s } } \right)^{\frac{1}{p_s-2}}
		\end{equation} 
		and let $\alpha _ { u } ( \eta) : = J_{\omega,s}  ( \eta u ) $. Then, $\alpha _ { u } ^ { \prime } ( \eta ) > 0$ for $0 < \eta < t _ { u} ^ { s }$ and $\alpha _ { u} ^ { \prime } ( \eta ) < 0$ for $\eta > t _ { u } ^ { s } $. In particular, $\eta \mapsto J _ { \omega,s } ( \eta u )$ achieves its unique maximum at $\eta = t _ { u } ^ { s }$, $t _ { u} ^ { s } u \in \mathcal { N } _{\omega,s} $ and 
		\[\lim _ { s \rightarrow 0 ^ { + } } t _ { u } ^ { s } =t_{u}^0= \exp\Big\{\frac{\mathcal{ E }_\omega\left(u,u\right)-p^{\prime}\left(0\right)\int _ { \Omega } | u | ^ { 2 } \ln | u| d x}{p^{\prime}\left(0\right) \|u\|_2^2}\Big\}> 0 .\] 
		In particular, $\sup _ { s \in \left[ 0 , s_0 \right] } t _ { u } ^ { s } < \infty.$ 
	\end{lemma}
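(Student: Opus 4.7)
The plan is to treat the fibering map $\alpha_u(\eta)=J_{\omega,s}(\eta u)$ by elementary calculus, and then perform a careful Taylor expansion in $s$ to identify the limit $t_u^0$ and obtain uniform boundedness.

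First, a direct computation from the definition (\ref{nengliang1}) gives
\[
\alpha_u(\eta)=\tfrac{\eta^2}{2}\bigl(\|u\|_{\omega,s}^2-\tau_s\|u\|_2^2\bigr)-\tfrac{\eta^{p_s}}{p_s}\|u\|_{p_s}^{p_s},
\]
so that
\[
\alpha_u'(\eta)=\eta\Bigl[\bigl(\|u\|_{\omega,s}^2-\tau_s\|u\|_2^2\bigr)-\eta^{p_s-2}\|u\|_{p_s}^{p_s}\Bigr].
\]
Since $\tau_s<\lambda_{1,s}^\omega$ forces $\|u\|_{\omega,s}^2-\tau_s\|u\|_2^2>0$ via (\ref{diyite}), and $p_s>2$, the bracket is strictly decreasing in $\eta$ on $(0,\infty)$ and vanishes precisely at $\eta=t_u^s$ given by (\ref{aaaaa}). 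This establishes monotonicity on either side of $t_u^s$, uniqueness of the maximum, and the Nehari condition $t_u^s u\in\mathcal{N}_{\omega,s}$ by reading off $\alpha_u'(t_u^s)=0$ and multiplying through by $t_u^s$.

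The main obstacle is the asymptotic behavior as $s\to 0^+$, because the exponent $\frac{1}{p_s-2}$ blows up while the base tends to $1$. My plan is to Taylor expand numerator and denominator separately. By \cite[Lemma 4.2]{feulefack2023logarithmic} we have $\|u\|_{\omega,s}^2=\|u\|_2^2+s\,\mathcal{E}_\omega(u,u)+o(s)$, and since $\tau_s=o(s)$ the numerator of the base is $\|u\|_2^2+s\,\mathcal{E}_\omega(u,u)+o(s)$. For the denominator, since $p_s=2+sp'(0)+o(s)$ with $p\in C^1([0,s_0])$, differentiating $s\mapsto\int_\Omega|u|^{p_s}dx$ at $s=0$ yields
\[
\|u\|_{p_s}^{p_s}=\|u\|_2^2+s\,p'(0)\!\int_\Omega u^2\ln|u|\,dx+o(s).
\]
Taking logarithms and dividing by $p_s-2=sp'(0)+o(s)$, the ratio of the two expansions gives
\[
\ln t_u^s=\frac{1}{p_s-2}\ln\!\Bigl(1+s\cdot\frac{\mathcal{E}_\omega(u,u)-p'(0)\int_\Omega u^2\ln|u|\,dx}{\|u\|_2^2}+o(s)\Bigr)\xrightarrow[s\to 0^+]{}\frac{\mathcal{E}_\omega(u,u)-p'(0)\int_\Omega u^2\ln|u|\,dx}{p'(0)\|u\|_2^2},
\]
which, upon exponentiating, produces the claimed formula for $t_u^0$.

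Finally, for $\sup_{s\in[0,s_0]}t_u^s<\infty$, I would observe that $s\mapsto t_u^s$ is continuous on $(0,s_0]$ (all ingredients depend continuously on $s$ by the $C^1$ assumptions on $p_s,\tau_s$ and by Lemma \ref{bj} applied to control $\|u\|_{\omega,s}^2$), and the preceding step extends it continuously to $s=0$ with value $t_u^0>0$. Compactness of $[0,s_0]$ then yields the finite supremum. The one technical point to watch carefully is the $o(s)$ in $\|u\|_{\omega,s}^2$: Lemma \ref{bj} provides a quantitative $O(s^2)$ estimate good enough to pass to the logarithm and divide by $s$, which is why the limit procedure is legitimate.
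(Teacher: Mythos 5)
Your proposal is correct and follows essentially the same route as the paper: the same explicit formula for $\alpha_u'(\eta)$, the same first-order expansions of $\|u\|_{\omega,s}^2$ (via Lemma \ref{bj}) and of $\|u\|_{p_s}^{p_s}$, and the same passage to the limit (your logarithmic form of the limit is just a rewriting of the paper's $(1+sx)^{1/s}$ computation), concluding with the continuous extension to $[0,s_0]$. Your explicit remark that $\tau_s<\lambda_{1,s}^\omega$ guarantees positivity of $\mathcal{E}_{\omega,s}(u,u)-\tau_s\|u\|_2^2$ is a small but welcome addition the paper leaves implicit.
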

	
	\begin{proof}
		By a direct computation.
		\[\alpha _ { u } ^ { \prime } ( \eta )  = \eta\big(\mathcal{ E }_ { \omega,s } ( u , u) - \tau_s  \|u\|_2^2- \eta^{p_s-2} \|u\|_{p_s} ^{p_s}\big).\]   
		By Lemma \ref{bj}, we obtain that $\|u\|_{\omega,s}^2=\|u\|_2^2+s\mathcal{ E }_{\omega}\left(u,u\right)+o(s),s\rightarrow 0^{+}.$ On the other hand, $\|u\|  _ { p _ { s } }^{p_s} =  \|u\| _ { 2 } ^ { 2 } + s p ^ { \prime } ( 0 ) \int _ { \Omega } | u| ^ { 2 } \ln | u| d x + o ( s ) $. 
		Let $a^{-1}=\|u\|_2^2,$ then
		\[\begin{aligned}
			\lim _ { s \rightarrow 0 } t _ { u } ^ { s } =& \lim _ { s \rightarrow 0 } \Big( \frac { 1 +a s \mathcal{ E } _ { \omega} ( u , u) -\tau_s+ o ( s ) } { 1 + s ap ^ { \prime } ( 0 )   \int _ { \Omega } | u | ^ { 2 } \ln | u| d x + o ( s ) } \Big) ^ { \frac { 1 } { p_s - 2 } }
			\\[1mm]=&
			\Big(\frac{\lim _ { s \rightarrow 0 }\left(1 +a s \mathcal{ E } _ { \omega} ( u , u) -\tau_s+ o ( s )\right)^{\frac{1}{s}}}{\lim _ { s \rightarrow 0 }\left(1 + s ap ^ { \prime } ( 0 )  \int _ { \Omega } | u | ^ { 2 } \ln | u| d x + o ( s )\right)^{\frac{1}{s}}}\Big)^{\frac{1}{p^{\prime}\left(0\right)}}
			\\[1mm]=&\Big(\frac{\exp\left\{\frac{a\mathcal{ E }_\omega\left(u,u\right)-\tau^{\prime}\left(0\right)}{1-\tau\left(0\right)}\right\}}{\exp\left\{ap^{\prime}\left(0\right) \int _ { \Omega } | u | ^ { 2 } \ln | u| d x \right\}}\Big)^{\frac{1}{p^{\prime}\left(0\right)}}
			\\[1mm]=&\exp\Big\{\frac{\mathcal{ E }_\omega\left(u,u\right)-p^{\prime}\left(0\right)\int _ { \Omega } | u | ^ { 2 } \ln | u| d x}{p^{\prime}\left(0\right) \|u\|_2^2}\Big\}>0.
		\end{aligned}\]

		This implies that the map $s \mapsto t _ { u } ^ { s }$ has a continuous extension on $\left[ 0 , s_0\right] .$ Hence $\sup _ { s \in \left[ 0 , s_0 \right] } t _ { u} ^ { s } < \infty .$ 
	\end{proof}
	
	\vspace{1\baselineskip}
	
	Next we give the version of Lemma \ref{fenshuns} and Lemma \ref{fenshuzuida} for the logarithmic Schr\"odinger operator in $\mathcal{ H }_0^{\ln}(\Omega ).$ We first prove that functions in $\mathcal{N}$ are uniformly far from the origin. In the following we always suppose $k \in \left(0,\frac{4}{N}\right).$
	
	\begin{lemma}\label{zx}
		There exist $C_1, C_2>0$ such that $ \|u\|_{2}\ge C_1, \|u\|\ge C_2,\ \forall \:u\in \mathcal{N}.$
	\end{lemma}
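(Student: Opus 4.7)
The plan is to exploit Pitt's inequality (Proposition \ref{pit}) in combination with the Nehari constraint, where the hypothesis $k<\frac{4}{N}$ plays a decisive role. For $u\in\mathcal{N}$, the defining identity reads
$$\mathcal{E}_\omega(u,u)=\lambda\|u\|_2^2+k\int_\Omega u^2\ln|u|\,dx.$$
Multiplying Pitt's inequality by $k>0$ and substituting on the right, I would rearrange to obtain
$$\Bigl(1-\frac{kN}{4}\Bigr)\mathcal{E}_\omega(u,u)\le\|u\|_2^2\Bigl[\lambda+\frac{kNa_N}{4}+k\ln\|u\|_2\Bigr].$$
Because $k\in(0,\frac{4}{N})$, the coefficient $1-\frac{kN}{4}$ is strictly positive, and since $u\in\mathcal{N}\setminus\{0\}$ forces $\mathcal{E}_\omega(u,u)>0$, the right-hand side must itself be strictly positive. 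This compels the bracket to be positive, yielding the explicit lower bound
$$\|u\|_2\ge\exp\Bigl(-\frac{\lambda}{k}-\frac{Na_N}{4}\Bigr)=:C_1>0.$$
Heuristically, if $\|u\|_2$ were very small, the term $k\ln\|u\|_2\to-\infty$ would drag the bracket negative while the left-hand side stays non-negative, a contradiction.

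For the lower bound on the energy norm $\|u\|=\sqrt{\mathcal{E}_\omega(u,u)}$, I would invoke the continuous embedding $\mathcal{H}_0^{\ln}(\Omega)\hookrightarrow L^2(\Omega)$ recalled in Section 2.1: this furnishes a constant $C_3>0$ with $\|u\|_2\le C_3\|u\|$ for every $u\in\mathcal{H}_0^{\ln}(\Omega)$. Composing with the estimate just obtained gives $\|u\|\ge C_1/C_3=:C_2>0$, which finishes the second bound.

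The only substantive ingredient is Pitt's inequality itself, already established as Proposition \ref{pit}; everything else is sign bookkeeping. The main conceptual subtlety, and the only place the subcritical assumption $k<\frac{4}{N}$ enters, is in guaranteeing that after absorbing the Pitt upper bound into the Nehari identity the residual coefficient $1-\frac{kN}{4}$ remains positive. This is precisely why $k=\frac{4}{N}$ marks the critical threshold above which the variational machinery degenerates, as flagged in the discussion following Theorem \ref{hx}.
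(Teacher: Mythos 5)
Your argument is correct and follows essentially the same route as the paper: combine the Nehari identity with Pitt's inequality, use $k<\frac{4}{N}$ to keep a positive coefficient on $\mathcal{E}_\omega(u,u)$, deduce a positive lower bound for $\|u\|_2$, and then pass to $\|u\|$ via the Poincar\'e embedding. The only cosmetic difference is that the paper further bounds $\mathcal{E}_\omega(u,u)\ge\lambda_1\|u\|_2^2$ by the first eigenvalue to get a slightly larger constant $C_1$, whereas you simply use strict positivity of the left-hand side; both yield the stated conclusion.
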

	
	\begin{proof}
		Let $k=\frac{4}{N}\eta$ for some $\eta \in \left(0,1\right).$ For $u \in \mathcal{H}_0^{\ln}(\Omega),$ we take 
		\[G(u)=\mathcal{ E }_{\omega}\left(u,u\right)-\lambda \int_{\Omega}u^2 dx-\frac{2}{N}\eta \int_{\Omega}u^2\ln u^2dx.\]
		By (\ref{pitt}), we have
		\[G(u)\ge \left(1-\eta\right)\mathcal{ E }_{\omega}\left(u,u\right)-\big(\frac{2}{N}\ln\|u\|_2^2+a_N+\frac{\lambda}{\eta}\big)\eta \|u\|_2^2.\]
		By \cite[Theorem 1.3]{feulefack2023logarithmic}, we have 
		\[\lambda _ { 1 }  : = \min \left\{ \mathcal { E } _ { \omega} ( u , u ) : u \in \mathcal{H}_0^{\ln}(\Omega) ,  \|u\| _ { 2 } = 1 \right\} >0.\] 
		Thus,
		\[G(u)\ge \Big(\frac{1-\eta}{\eta}\lambda_1-\frac{2}{N}\ln\|u\|_2^2-a_N-\frac{\lambda}{\eta}\Big)\eta \|u\|_2^2>0\]
		if \: $\|u\|_2 <\exp\big\{\frac{1-\eta}{4\eta}N\lambda_1-\frac{N}{4}a_N-\frac{\lambda N}{4\eta}\big\}:=C_1>0.$ 
		
		Hence, for $u \in \mathcal{N}, \|u\|_2 \ge C_1.$ By the Poincaré inequality in \cite[Lemma 2.3]{feulefack2023logarithmic}, there exists $C>0$ such that $\|u\|\ge C\|u\|_2\ge C_2>0.$
	\end{proof}
	
	\begin{lemma}\label{zdz}
		For $u \in \mathcal{H}_0^{\ln}(\Omega)\backslash \{ 0 \} $, define
		\[t _ { u } ^ { 0 }  = \exp \Big( \frac { \mathcal { E } _ { \omega } ( u , u ) - \lambda  \int _ { \Omega } w ^2dx-k\int _ { \Omega } u ^ { 2 } \ln|u| dx} { k \|u\| _ { 2 } ^ { 2 } } \Big)\] 
		and let $\alpha _ { u } ( s ) : = J  ( s u ) $. Then, $\alpha _ { u } ^ { \prime } ( s ) > 0$ for $0 < s < t _ { u } ^ { 0 }$ and $\alpha _ { u } ^ { \prime } ( s ) < 0$ for $s > t _ { u } ^ { 0 } $. In particular, $s \mapsto J_{\ln}  ( s u )$ achieves its unique maximum at $s = t _ { u} ^ { 0 }$ and $t _ { u } ^ { 0 } u \in \mathcal { N }  $. 
	\end{lemma}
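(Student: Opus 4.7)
The plan is to prove this by direct differentiation of the one-variable function $\alpha_u(s)=J_{\ln}(su)$. First, using the identity $\ln(su)^2=\ln s^2+\ln u^2$ for $s>0$, I will expand the energy functional (\ref{nengliang2}) as
\[
\alpha_u(s)=\frac{s^2}{2}\mathcal{E}_\omega(u,u)-\frac{\lambda s^2}{2}\|u\|_2^2+\frac{ks^2}{4}\|u\|_2^2-\frac{ks^2}{4}\ln s^2\,\|u\|_2^2-\frac{ks^2}{4}\int_\Omega u^2\ln u^2\,dx.
\]
This decomposition is the key preparation since it isolates the only term where $s$ appears inside a logarithm.

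Next, I will differentiate term by term. The derivative of $-\frac{ks^2}{4}\ln s^2$ produces $-\frac{ks}{2}\ln s^2-\frac{ks}{2}$, and the latter piece cancels the earlier $\frac{ks}{2}\|u\|_2^2$ contribution. After simplification and using $\frac{k}{2}\ln s^2=k\ln s$ (for $s>0$) together with $\frac{k}{2}\int_\Omega u^2\ln u^2\,dx=k\int_\Omega u^2\ln|u|\,dx$, I obtain the clean formula
\[
\alpha_u'(s)=s\Bigl[\mathcal{E}_\omega(u,u)-\lambda\|u\|_2^2-k\int_\Omega u^2\ln|u|\,dx-k\,\|u\|_2^2\ln s\Bigr].
\]

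Now the monotonicity follows immediately. Since $k>0$ and $\|u\|_2^2>0$, the bracketed expression is a strictly decreasing affine function of $\ln s$, hence of $s$, and vanishes precisely when
\[
\ln s=\frac{\mathcal{E}_\omega(u,u)-\lambda\|u\|_2^2-k\int_\Omega u^2\ln|u|\,dx}{k\|u\|_2^2},
\]
which is exactly $\ln t_u^0$. Therefore $\alpha_u'(s)>0$ for $0<s<t_u^0$ and $\alpha_u'(s)<0$ for $s>t_u^0$, so $s\mapsto J_{\ln}(su)$ attains its unique maximum at $s=t_u^0$. Finally, to verify $t_u^0 u\in\mathcal{N}$, I apply the expression for $\langle J_{\ln}'(v),v\rangle$ from Lemma \ref{kwx} with $v=t_u^0 u$ and factor out $(t_u^0)^2$; the resulting bracket is exactly the vanishing quantity above at $s=t_u^0$, so the Nehari constraint (\ref{nehari2}) is satisfied.

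There is no real obstacle here beyond bookkeeping: the only subtlety is correctly handling the derivative of the term $s^2\ln s^2$ and confirming that the stray linear-in-$s$ contributions cancel. The well-definedness of all integrals involved is already guaranteed by Lemma \ref{dyld} and Proposition \ref{pit}, so no additional integrability argument is required.
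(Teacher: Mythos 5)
Your proof is correct and follows essentially the same route as the paper: the paper simply records $\alpha_u'(s)=s\bigl(\mathcal{E}_\omega(u,u)-\lambda\int_\Omega u^2\,dx-\tfrac{k}{2}\int_\Omega u^2\ln|su|^2\,dx\bigr)$ and declares the rest a direct computation, and your formula is exactly this after expanding $\ln|su|^2=\ln s^2+\ln u^2$. Your write-up just makes explicit the cancellation of the linear-in-$s$ terms, the strict monotonicity of the bracket in $\ln s$ (using the standing assumption $k>0$), and the identification of the zero with $t_u^0$, all of which is consistent with the paper.
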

	
	\begin{proof}
		Note that 
		\[\alpha _ { u } ^ { \prime } ( s )  = \Big( \mathcal{ E } _ { \omega } ( u , u ) - \lambda  \int _ { \Omega } u ^2dx- \frac{k}{2} \int _ { \Omega } u ^ { 2 } \ln | su |^2dx  \Big) s.\] 
		The claim now follows by a direct computation. 
	\end{proof}
	
	\begin{lemma}\label{dxnjc}
		$\mathcal{ N}\cap C_c^{\infty}(\Omega )$ is dense in $\mathcal{ N}.$ 
	\end{lemma}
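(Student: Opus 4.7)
The plan is to approximate an arbitrary $u \in \mathcal{N}$ by smooth compactly supported functions and then rescale them back onto the Nehari manifold using the map $\varphi \mapsto t_\varphi^0$ from Lemma \ref{zdz}.

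Fix $u \in \mathcal{N}$. Since $C_c^\infty(\Omega)$ is dense in $\mathcal{H}_0^{\ln}(\Omega)$ by definition, choose $\varphi_n \in C_c^\infty(\Omega)$ with $\varphi_n \to u$ in $\mathcal{H}_0^{\ln}(\Omega)$. By Lemma \ref{zx}, $\|u\|_2 \geq C_1 > 0$, so $\|\varphi_n\|_2 \to \|u\|_2 > 0$, and in particular $\varphi_n \neq 0$ for all large $n$. Hence Lemma \ref{zdz} produces a unique $t_n := t_{\varphi_n}^0 > 0$ with $t_n \varphi_n \in \mathcal{N}$, given explicitly by
\[
t_n = \exp\left(\frac{\mathcal{E}_\omega(\varphi_n,\varphi_n) - \lambda \int_\Omega \varphi_n^2\,dx - k\int_\Omega \varphi_n^2 \ln|\varphi_n|\,dx}{k\|\varphi_n\|_2^2}\right).
\]
Set $u_n := t_n \varphi_n$. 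Since $\varphi_n \in C_c^\infty(\Omega)$ and $t_n \in \mathbb{R}$, we have $u_n \in \mathcal{N} \cap C_c^\infty(\Omega)$.

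Next I would show that $t_n \to 1$. The numerator in the exponent converges to
\[
\mathcal{E}_\omega(u,u) - \lambda \int_\Omega u^2\,dx - k\int_\Omega u^2 \ln|u|\,dx = 0,
\]
since the first term converges by the definition of convergence in $\mathcal{H}_0^{\ln}(\Omega)$, the second by the compact embedding $\mathcal{H}_0^{\ln}(\Omega) \hookrightarrow L^2(\Omega)$, and the third by Lemma \ref{smnd} applied to $\int u_n^2 \ln u_n^2 = 2\int u_n^2 \ln|u_n|$; the vanishing of the limit is precisely the Nehari identity for $u$. The denominator converges to $k\|u\|_2^2 > 0$, so $t_n \to 1$ by continuity of the exponential.

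Finally, we conclude $u_n \to u$ in $\mathcal{H}_0^{\ln}(\Omega)$ by the triangle inequality
\[
\|u_n - u\|_{\mathcal{H}_0^{\ln}(\Omega)} \leq t_n \|\varphi_n - u\|_{\mathcal{H}_0^{\ln}(\Omega)} + |t_n - 1|\,\|u\|_{\mathcal{H}_0^{\ln}(\Omega)} \longrightarrow 0.
\]
The only nontrivial step is the convergence of $\int \varphi_n^2 \ln|\varphi_n| \to \int u^2 \ln|u|$; this is the main obstacle since the integrand is neither bounded nor has polynomial growth, but it is covered by the Brezis–Lieb-type Lemma \ref{smnd} (and ultimately by Pitt's inequality). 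All remaining steps are routine.
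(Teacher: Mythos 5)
Your proposal is correct and takes essentially the same route as the paper: approximate $u\in\mathcal{N}$ by $\varphi_n\in C_c^\infty(\Omega)$ and project back onto $\mathcal{N}$ via the scaling $t_{\varphi_n}^0$ from Lemma \ref{zdz}, showing $t_{\varphi_n}^0\to 1$. Your version merely fills in the convergence details (numerator to the Nehari identity via Lemma \ref{smnd} and the compact embedding, denominator to $k\|u\|_2^2>0$) that the paper leaves implicit.
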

	\begin{proof}
		For any $v \in \mathcal{ N},$ since $C_c^{\infty}(\Omega )$ is dense in $\mathcal{H}_0^{\ln}(\Omega )$, there exists $\left\{v_n\right\} \subset C_c^{\infty}(\Omega )$ such that $v_n\rightarrow v$ in $\mathcal{ H }_0^{\ln}(\Omega ).$ 
		
		By Lemma \ref{zdz} we obtain that $t_{v_n}^0v_n \in \mathcal{ N}$ and passing to a subsequence we have $t_{v_n}^0 \rightarrow 1.$ Thus we complete the proof.
	\end{proof}

	\subsection{Uniform bounds for all Nehari least-energy solutions}
	
	Next we show all Nehari least-energy solutions of (\ref{fenshujie}) is uniform bounded in $\mathcal{ H }_0^{\ln}(\Omega ),$ then we can take convergent subsequence in the proof of Theorem \ref{limits}.

	\begin{lemma}\label{euw1}
		Let $u \in \mathcal { H } _ { \omega } ^ { s } ( \Omega )$ for some $s \in ( 0 , 1 ) $, then $u \in \mathcal { H }_0^{\ln} ( \Omega )$ and
		\[ \mathcal { E } _ { \omega } ( u , u ) \le  \frac { 1 } { s } \| u \| _ {\omega, s } ^ { 2 } . \]
	\end{lemma}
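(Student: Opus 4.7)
The plan is to work on the Fourier side, using the representations $\mathcal{E}_\omega(u,u)=\int_{\mathbb{R}^N}\ln(1+|\xi|^2)|\widehat{u}(\xi)|^2\,d\xi$ and $\|u\|_{\omega,s}^2=\int_{\mathbb{R}^N}(1+|\xi|^2)^s|\widehat{u}(\xi)|^2\,d\xi$ given in Section~2.1, and to reduce the estimate to a pointwise inequality in the Fourier variable.

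The key elementary inequality is that for every $s>0$ and every $r\ge 1$,
\[
\ln r \;\le\; \frac{r^s-1}{s} \;\le\; \frac{r^s}{s}.
\]
This follows from $e^y\ge 1+y$ applied with $y=s\ln r$, which gives $r^s=e^{s\ln r}\ge 1+s\ln r$. Applying this pointwise with $r=1+|\xi|^2\ge 1$ yields
\[
\ln(1+|\xi|^2)\;\le\;\frac{(1+|\xi|^2)^s}{s} \qquad \text{for all }\xi\in\mathbb{R}^N.
\]
Multiplying by $|\widehat{u}(\xi)|^2$ and integrating, I obtain
\[
\mathcal{E}_\omega(u,u)=\int_{\mathbb{R}^N}\ln(1+|\xi|^2)|\widehat{u}(\xi)|^2\,d\xi\;\le\;\frac{1}{s}\int_{\mathbb{R}^N}(1+|\xi|^2)^s|\widehat{u}(\xi)|^2\,d\xi=\frac{1}{s}\|u\|_{\omega,s}^2,
\]
which is exactly the desired inequality, and in particular is finite since $u\in\mathcal{H}_\omega^s(\Omega)$.

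It remains to check the membership $u\in\mathcal{H}_0^{\ln}(\Omega)$. Since $u\in\mathcal{H}_\omega^s(\Omega)\subset L^2(\mathbb{R}^N)$ with $u\equiv 0$ on $\mathbb{R}^N\setminus\Omega$, and we have just shown $\mathcal{E}_\omega(u,u)<\infty$, it follows that $u\in H^{\ln}(\mathbb{R}^N)$. Because $\Omega$ has Lipschitz boundary, the identification of $\mathcal{H}_0^{\ln}(\Omega)$ recalled from \cite[Lemma 2.3]{feulefack2023logarithmic} in Section~2.1 gives $u\in\mathcal{H}_0^{\ln}(\Omega)$.

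There is no real obstacle here; the entire argument is the one-line pointwise inequality $\ln r\le r^s/s$ on $[1,\infty)$ together with Parseval-type identities already set up in the paper. The only thing to be mildly careful about is the lower bound $1+|\xi|^2\ge 1$ that legitimises the application of the inequality on all of $\mathbb{R}^N$, and the verification that the boundary condition $u\equiv 0$ outside $\Omega$ is preserved, which is immediate.
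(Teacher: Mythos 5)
Your proof is correct and follows essentially the same route as the paper's: the paper likewise works on the Fourier side and uses the pointwise bound $\ln(1+|\xi|^2)=\frac1s\ln\bigl((1+|\xi|^2)^s\bigr)\le\frac1s(1+|\xi|^2)^s$. Your explicit check of the membership $u\in\mathcal{H}_0^{\ln}(\Omega)$ via the Lipschitz-boundary identification is a small addition the paper leaves implicit, but the core argument is identical.
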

	
	\begin{proof}
		Note that
		\[\begin{aligned}
			\mathcal { E } _ { \omega } ( u , u )=& \int_{\mathbb{ R }^N}\ln\left(1+|\xi|^2\right)  | \widehat { u } ( \xi ) | ^ { 2 } d \xi\\=&
			\frac{1}{s}\int_{\mathbb{ R }^N}\ln\left(1+|\xi|^2\right)^s|   \widehat { u } ( \xi ) | ^ { 2 } d \xi
			\le \frac { 1 } { s } \| u \| _ {\omega, s } ^ { 2 }.
		\end{aligned}\] 
	\end{proof}\medskip
	
	Observe that if there exists $s_0>0$ and $C>0$ such that $\|u\|_{\omega,s}\le C, s \in \left(s_0,1\right),$ we yield that $\|u\|\le \frac{1}{s_0}C.$ For $s$ away from zero, we can control the boundedness of $\mathcal{ E }_{\omega}\left(u,u\right)$ through the uniform boundedness of $\|u\|_{\omega,s}$. So we need the boundedness of $\mathcal{ E }_{\omega}\left(u,u\right)$ for $s$ near zero. To prove this, we first present an “intermediate” logarithmic-type Sobolev inequality,
	which is inspired by and closely related to logarithmic laplacian case \cite[Lemma 4.4]{hernandez2022small}.

	\begin{lemma}\label{log}
		Let $s\in \left(0,s_0\right]$ and $v \in \mathcal{ H }_{\omega}^s(\Omega ),$ it holds that 
		\[\begin{aligned}
			&\int_0^1 \frac{4N}{\left(N-2s\tau\right)^2}\int_{\Omega}|v|^{2_{s\tau}^*}\ln |v| dx d\tau\\ \le& \int_0^1 k^{\prime}\left(s\tau\right)\|v\|_{\omega,s\tau}^{2_{s\tau}^*}d\tau+\int_0^1 k\left(s\tau\right)\frac{2N}{\left(N-2s\tau\right)^2}\|v\|_{\omega,s\tau}^{2_{s\tau}^*}\ln \|v\|_{\omega,s\tau}^2d\tau
			\\[1mm]&+
			\int_0^1 k\left(s\tau\right)\frac{N}{N-2s\tau}\|v\|_{\omega,s\tau}^{2_{s\tau}^*-2}\int_{\mathbb{ R }^N}\left(1+|\xi|^2\right)^{s\tau}|\widehat{v}\left(\xi\right)|^2\ln \left(1+|\xi|^2\right)d\xi d\tau, 
		\end{aligned}\]
		where $k(s):=\kappa_{N,s}^{\frac{2_s^*}{2}}$.
		Moreover, if $\|v\|_{\omega,s}^2\le C$ for every $s\in \left(0,s_0\right]$ where $C$ is a positive constant, then there exists $C_1=C_1\left(C,\Omega\right)>0$ such that
		\[\begin{aligned}
			&\int_0^1\frac{4N}{\left(N-2s\tau\right)^2} \int_{\Omega \cap\left\{ |v|\ge 1\right\}}|v|^{2_{s\tau}^*}\ln |v| dx d\tau\\\le& C_1+\int_0^1 k\left(s\tau\right)\frac{N}{N-2s\tau}\|v\|_{\omega,s\tau}^{2_{s\tau}^*-2}\int_{\mathbb{ R }^N}\left(1+|\xi|^2\right)^{s\tau}|\widehat{v}\left(\xi\right)|^2\ln \left(1+|\xi|^2\right)d\xi d\tau.
		\end{aligned}\]
	\end{lemma}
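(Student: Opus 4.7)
The plan is to derive both inequalities as corollaries of the sharp fractional pseudo-relativistic Schrödinger Sobolev inequality (Theorem~\ref{sobolev}), by integrating an instantaneous comparison of $s$-derivatives along the segment from $0$ to $s$. First I raise $\|v\|_{2_\sigma^*}^2\le\kappa_{N,\sigma}\|v\|_{\omega,\sigma}^2$ to the power $2_\sigma^*/2$ to obtain
$$H(\sigma):=\int_{\Omega}|v|^{2_\sigma^*}\,dx\;\le\;k(\sigma)\|v\|_{\omega,\sigma}^{2_\sigma^*}=:G(\sigma),\qquad \sigma\in[0,s].$$
Since $2_0^*=2$ and $\kappa_{N,0}=1$ by (\ref{knsdedaxiao}), one has $G(0)=H(0)=\|v\|_2^2$, so $G-H$ is nonnegative and vanishes at $\sigma=0$. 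The fundamental theorem of calculus together with the substitution $\sigma=s\tau$ gives, for $s>0$,
$$0\;\le\;\frac{G(s)-H(s)}{s}\;=\;\int_0^1\bigl[G'(s\tau)-H'(s\tau)\bigr]\,d\tau,$$
so $\int_0^1H'(s\tau)\,d\tau\le\int_0^1G'(s\tau)\,d\tau$.

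Next I compute the two derivatives. Using $\tfrac{d}{d\sigma}2_\sigma^*=\tfrac{4N}{(N-2\sigma)^2}$ yields
$$H'(\sigma)=\frac{4N}{(N-2\sigma)^2}\int_{\Omega}|v|^{2_\sigma^*}\ln|v|\,dx.$$
Writing $G(\sigma)=k(\sigma)\exp\!\bigl(\tfrac{2_\sigma^*}{2}\ln\|v\|_{\omega,\sigma}^2\bigr)$ and applying the product rule together with $\tfrac{d}{d\sigma}\tfrac{2_\sigma^*}{2}=\tfrac{2N}{(N-2\sigma)^2}$ and
$\tfrac{d}{d\sigma}\|v\|_{\omega,\sigma}^2=\int_{\mathbb R^N}(1+|\xi|^2)^\sigma\ln(1+|\xi|^2)|\widehat v(\xi)|^2\,d\xi$,
one obtains precisely the three terms on the right-hand side of the first displayed inequality of the lemma. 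This establishes the first assertion.

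For the second assertion, I split $\int_{\Omega}|v|^{2_{s\tau}^*}\ln|v|\,dx=\int_{\{|v|<1\}}+\int_{\{|v|\ge1\}}$. The elementary bound $t^\alpha|\ln t|\le\tfrac{1}{e\alpha}$ for $t\in(0,1]$ and $\alpha>0$ gives
$\Bigl|\int_{\{|v|<1\}}|v|^{2_{s\tau}^*}\ln|v|\,dx\Bigr|\le\tfrac{|\Omega|}{e\,2_{s\tau}^*}$,
which is uniformly bounded for $s\tau\in[0,s_0]$. Using the assumption $\|v\|_{\omega,s\tau}^2\le C$, combined with the continuity at $0$ of $\sigma\mapsto k(\sigma),k'(\sigma),2_\sigma^*$ (hence boundedness on $[0,s_0]$) and the fact that $t^\alpha|\ln t|$ is bounded for $t\in[0,\sqrt{C}]$ when $\alpha\in[2,2_{s_0}^*]$ (recall $2_{s\tau}^*\ge2$), the two terms $\int_0^1k'(s\tau)\|v\|_{\omega,s\tau}^{2_{s\tau}^*}\,d\tau$ and $\int_0^1k(s\tau)\tfrac{2N}{(N-2s\tau)^2}\|v\|_{\omega,s\tau}^{2_{s\tau}^*}\ln\|v\|_{\omega,s\tau}^2\,d\tau$ collect into a single constant $C_1=C_1(C,\Omega)$, as desired.

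The main technical obstacle I expect is rigorously justifying the $\sigma$-differentiation of $\|v\|_{\omega,\sigma}^2$ under the Fourier integral, since $v$ is only assumed to lie in $\mathcal{H}_\omega^s(\Omega)$ and the logarithmic weight $\ln(1+|\xi|^2)$ pushes the integrand to the boundary of integrability as $\sigma\uparrow s$. I would bypass this by first proving the lemma for $v\in C_c^\infty(\Omega)$, where differentiation under the integral is immediate since $\widehat v$ decays faster than any polynomial, and then passing to the limit using density of $C_c^\infty(\Omega)$ in $\mathcal{H}_\omega^s(\Omega)$ together with continuity in the $\|\cdot\|_{\omega,s\tau}$-norm of every quantity appearing in the inequality.
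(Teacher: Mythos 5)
Your proposal is correct and follows essentially the same route as the paper: both define the Sobolev deficit $k(\sigma)\|v\|_{\omega,\sigma}^{2_\sigma^*}-\|v\|_{2_\sigma^*}^{2_\sigma^*}$ (nonnegative by Theorem \ref{sobolev}, vanishing at $\sigma=0$), apply the fundamental theorem of calculus with the substitution $\sigma=s\tau$, and compute the same three derivative terms; your treatment of the $\{|v|<1\}$ region and the remark on justifying differentiation under the Fourier integral are slightly more explicit than the paper's, but do not change the argument.
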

	\begin{proof}
		By Theorem \ref{sobolev}, we obtain $\|v\|_{2_s^*}^{2_s^*}\le \kappa_{N,s}^{\frac{2_s^*}{2}}\|v\|_{\omega,s}^{2_s^*}$. Set $$H(s)=k(s)\|v\|_{\omega,s}^{2_s^*}-\|v\|_{2_s^*}^{2_s^*},\:\:k(s):=\kappa_{N,s}^{\frac{2_s^*}{2}},$$ then $H(s)\ge 0$ for $s\in  \left(0,s_0\right]$ and $H\in C^1 \left(0,s_0\right]$ with $H\left(0\right)=0.$ Note that
		\[\begin{aligned}
			H^{\prime}(s)=&k^{\prime}(s)\|v\|_{\omega,s}^{2_s^*}-\frac{4N}{\left(N-2s\right)^2}\int_{\Omega}|v|^{2_{s}^*}\ln |v| dx\\&+k(s)\frac{2N}{\left(N-2s\right)^2}\|v\|_{\omega,s}^{2_s^*}\ln \|v\|_{\omega,s}^2\\&+k(s)\frac{N}{N-2s}\|v\|_{\omega,s}^{2_s^*-2}\int_{\mathbb{ R }^N}\left(1+|\xi|^2\right)^{s}|\widehat{v}\left(\xi\right)|^2\ln \left(1+|\xi|^2\right)d\xi.
		\end{aligned}\]
		Since $\int_0^1 H^{\prime}\left(s\tau\right)d\tau=\frac{1}{s}\left(H(s)-H\left(0\right)\right)\ge 0,$ we obtain the desired identity.
		
		If $\|v\|_{\omega,s}^2\le C$ for every $s\in \left(0,s_0\right]$, since $k(s)\in C^1\left(0,s_0\right],$ then there is $C_2>0$ such that 
		\[\int_0^1 k^{\prime}\left(s\tau\right)\|v\|_{\omega,s\tau}^{2_{s\tau}^*}d\tau+\int_0^1 k\left(s\tau\right)\frac{2N}{\left(N-2s\tau\right)^2}\|v\|_{\omega,s\tau}^{2_{s\tau}^*}\ln \|v\|_{\omega,s\tau}^2d\tau\le C_2.\]
		Thus, we complete the proof.
	\end{proof}
	
	\begin{lemma}\label{beikongzhi}
		Let $v_s \in \mathcal{ N}_{\omega,s}$ be such that $||v_s||_{\omega,s}^2\le C_0, s\in \left(0,s_0\right]$ where $C_0>0$ is a constant that does not depend on $s$. $p_s$ and $\tau_s$ are defined in (\ref{psdtj})(\ref{psdtj2}). Then there is $C=C\left(C_0,\Omega\right)>0$ such that
		\[||v_s||^2=\mathcal{ E }_{\omega}\left(v,v\right)<C\ \ {\rm for}\ \, s\in \left(0,s_0\right].\]
	\end{lemma}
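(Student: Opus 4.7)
The plan is to split $(0, s_0]$ into two regimes. For $s \in [s_*, s_0]$ with a fixed $s_* > 0$, Lemma \ref{euw1} gives
\begin{equation*}
\mathcal{E}_\omega(v_s, v_s) \le \frac{1}{s}\|v_s\|_{\omega,s}^2 \le \frac{C_0}{s_*},
\end{equation*}
so the substance is the regime $s \in (0, s_*)$, where this estimate degenerates. In this regime I would start from the Nehari identity written on the Fourier side,
\begin{equation*}
\int_{\mathbb{R}^N}\left[(1+|\xi|^2)^s - 1\right]|\widehat{v_s}(\xi)|^2\, d\xi = \|v_s\|_{p_s}^{p_s} - \|v_s\|_2^2 + \tau_s\|v_s\|_2^2,
\end{equation*}
and apply the pointwise inequality $(1+|\xi|^2)^s - 1 \ge s\ln(1+|\xi|^2)$ (coming from $e^x \ge 1+x$) to get $s\,\mathcal{E}_\omega(v_s, v_s) \le \|v_s\|_{p_s}^{p_s} - \|v_s\|_2^2 + \tau_s\|v_s\|_2^2$. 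The task then reduces to showing the right-hand side is of order $s$: the last term is $o(s)$ since $\tau_s = o(s)$ and $\|v_s\|_2^2\le C_0$, while for the norm difference, writing $p_s - 2 = sq_s$ with $q_s \to p'(0) \in (0, 4/N)$ and applying the fundamental theorem of calculus,
\begin{equation*}
\|v_s\|_{p_s}^{p_s} - \|v_s\|_2^2 = sq_s \int_0^1 \int_\Omega |v_s|^{2+sq_s\alpha}\ln|v_s|\, dx\, d\alpha.
\end{equation*}

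The contribution of $\{|v_s|<1\}$ to this double integral is non-positive, so it suffices to bound the part on $\{|v_s|\ge 1\}$. The gap $q_s < 4/N$ allows me to set $\mu(\alpha) := sq_s\alpha N/4 \le s$ and check that $2 + sq_s\alpha \le 2^*_{\mu(\alpha)}$, yielding the pointwise domination $|v_s|^{2+sq_s\alpha}\ln|v_s| \le |v_s|^{2^*_{\mu(\alpha)}}\ln|v_s|$ on $\{|v_s|\ge 1\}$. The substitution $\tau = q_s\alpha N/4 \in [0, q_sN/4] \subset [0, 1-\delta]$ then recasts the integral as part of the left-hand side of Lemma \ref{log}; combining Lemma \ref{log} with the Sobolev estimate of Theorem \ref{sobolev} (applied with the uniform bound $\|v_s\|_{\omega,\sigma}^2 \le \|v_s\|_{\omega,s}^2 \le C_0$ for $\sigma \in (0, s]$) should deliver the uniform $O(s)$ bound on the right-hand side above, whence $\mathcal{E}_\omega(v_s, v_s) \le C$.

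The main obstacle I expect is the bookkeeping in the last step: a direct application of Lemma \ref{log} inherits the $1/s$ blowup from the identity $\int_0^1 A(s\tau)\, d\tau = \frac{1}{s}(\|v_s\|_{\omega,s}^2 - \|v_s\|_2^2) \le C_0/s$, which would only give $\mathcal{E}_\omega(v_s, v_s) \le C/s$. The strict subcriticality $p'(0) < 4/N$ is what prevents this: it keeps the parameter $\tau = q_s\alpha N/4$ uniformly bounded away from the singular endpoint $1$ (ensuring the factor $\frac{N}{N-2s\tau}$ stays bounded), and the resulting $1/s$ cancellation against the prefactor $sq_s$ from the Nehari identity is the delicate point that requires careful matching.
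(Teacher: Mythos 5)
Your setup --- splitting off $s\in[s_*,s_0]$ via Lemma \ref{euw1}, the Nehari identity on the Fourier side, the fundamental-theorem-of-calculus expansion of $\|v_s\|_{p_s}^{p_s}-\|v_s\|_2^2$, discarding $\{|v_s|<1\}$, dominating by $|v_s|^{2^*_{\mu(\alpha)}}$, and invoking Lemma \ref{log} --- assembles the same ingredients as the paper, but the argument does not close, and you have in fact located the gap yourself without filling it. The fatal move is the very first one in the small-$s$ regime: replacing the exact Taylor identity
\[\frac{\|v_s\|_{\omega,s}^2-\|v_s\|_2^2}{s}=\int_0^1\int_{\mathbb{R}^N}(1+|\xi|^2)^{s\tau}\ln\left(1+|\xi|^2\right)|\widehat{v}_s(\xi)|^2\,d\xi\,d\tau=:\int_0^1 A(s\tau)\,d\tau\]
by the weaker pointwise bound $s\,\mathcal{E}_\omega(v_s,v_s)=s\,A(0)\le \|v_s\|_{\omega,s}^2-\|v_s\|_2^2$. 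After Lemma \ref{log}, the right-hand side of your inequality is controlled by $C+K\int_0^1 A(s\tau)\,d\tau$ for some bounded $K$, and $\int_0^1 A(s\tau)\,d\tau$ is a priori only $O(1/s)$; since it can be much larger than $A(0)$, it cannot be absorbed into your left-hand side, and you land exactly on the bound $\mathcal{E}_\omega(v_s,v_s)\le C/s$ that you flag as the main obstacle. Your proposed escape --- that $p'(0)<4/N$ keeps $\tau$ away from a ``singular endpoint'' so that $\frac{N}{N-2s\tau}$ stays bounded --- is not the mechanism: that factor is bounded by $\frac{N}{N-2s_0}$ on all of $[0,1]$ regardless of subcriticality, and its boundedness does nothing to remove the $1/s$.

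What actually closes the argument (and what the paper does) is an absorption. One keeps $\int_0^1 A(s\tau)\,d\tau$ as the left-hand side, uses $p'(0)<4/N$ to choose $\delta\in(0,1)$ with $p'(s\tau)\le\delta\frac{4N}{(N-2s\tau)^2}$ for small $s$, so that after Lemma \ref{log} the dangerous term on the right becomes $\int_0^1\delta\,k(s\tau)\frac{N}{N-2s\tau}\|v_s\|_{\omega,s\tau}^{2^*_{s\tau}-2}A(s\tau)\,d\tau$, whose coefficient tends to $\delta<1$; subtracting gives $\int_0^1\varphi_s(\tau)A(s\tau)\,d\tau\le C$ with $\varphi_s\ge\eta>0$, and the monotonicity $\mathcal{E}_\omega(v_s,v_s)=A(0)\le A(s\tau)$ then yields the claim. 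Your variant could in principle be salvaged along the same lines, but it would require retaining the full integral $\int_0^1 A(s\tau)\,d\tau$ on the left, a version of Lemma \ref{log} over the restricted range $\tau\in[0,q_sN/4]$, and the comparison $\int_0^{\theta}A(s\tau)\,d\tau\le\theta\int_0^1A(s\tau)\,d\tau$ (valid since $A$ is nondecreasing) to manufacture a contraction factor $\theta<1$; none of these steps appears in your writeup, so as written the proof has a genuine gap at its central point.
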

	
	\begin{proof}
		By Taylor's expansion,  we obtain that
		\begin{equation}\label{dx}
			\mathcal { G } : = \frac { \| v_s\| _ { \omega,s } ^ { 2 } -  \| v_s \| _ { 2 } ^ { 2 } } { s } = \int _ { 0 } ^ { 1 } \int _ { \mathbb { R } ^ { N } } \left(1+|\xi|^2\right)^{s\tau}\ln \left(1+ | \xi | ^ { 2 } \right) | \widehat {v }_s ( \xi ) | ^ { 2 } d \xi d \tau .
		\end{equation}
		Since $v_s \in \mathcal{N}_{\omega,s},$ by Taylor expansion we have
		\[\begin{aligned}
			\mathcal { G }  =& \frac { ||v_s||_{p_s}^{p_s}+\tau_{s}||v_s||_2^2-  \| v_s \| _ { 2 } ^ { 2 } } { s } 
			\\=&\frac{||v_s||_{p_s}^{p_s}-||v_s||_2^2}{s}+\frac{\tau_s}{s}||v_s||_2^2 \\
			\le&\int _ { 0 } ^ { 1 }p^{\prime}\left(s\tau\right) \int _ { \{ |v_s | \geq 1 \} } | v_s| ^ { 2 _{s\tau}^ { * } } \ln | v_s| d x d \tau +\frac{\tau_s}{s}||v_s||_2^2.
		\end{aligned} \]
		Note that $||v_s||_2^2\le \frac{1}{\lambda_{1,s}^\omega}||v_s||_{\omega,s}^2,\lim\limits_{s\rightarrow0}\lambda_{1,s}^\omega=1$ and $\tau_s=o(s)$, there exists $C_1>0$ such that
		\[\mathcal { G } \le \int _ { 0 } ^ { 1 }p^{\prime}\left(s\tau\right) \int _ { \{ |v_s | \geq 1 \} } | v_s| ^ { 2 _{s\tau}^ { * } } \ln | v_s| d x d \tau+C_1. \]
		Since $p^{\prime}\left(0\right)<\frac{4}{N},$ there exists $\delta\in \left(0,1\right)$ and $ \tilde{s} _ { 0 }\in \left(0,s_0\right)$ satisfying
		\[p^{\prime}\left(s\tau\right)\le \delta \frac{4N}{\left(N-2s\tau\right)^2},\quad s\in \left(0, \tilde{s} _ { 0 }\right),\tau\in \left(0,1\right).\]
		By Lemma \ref{log}, there is $C _ { 2} = C _ { 2} \left( C _ { 0 } ,\Omega \right) > 0$ such that  
		\[\begin{aligned}
			\mathcal { G } \le \int_0^1 \delta k\left(s\tau\right)\frac{N}{N-2s\tau}||v_s||_{\omega,s\tau}^{2_{s\tau}^*-2}\int_{\mathbb{ R }^N}\left(1+|\xi|^2\right)^{s\tau}|\widehat{v}_s\left(\xi\right)|^2\ln \left(1+|\xi|^2\right)d\xi d\tau+C_2
		\end{aligned} .\] 
		
		For $\tau \in ( 0 , 1 )$ and $\sigma \in \left( 0 , \tilde{s} _ { 0 } \right] ,$ let 
		\[\varphi _ { \sigma } ( \tau ) : = 1 - \delta k\left(\sigma\tau\right)\frac{N}{N-2\sigma\tau}||v_s||_{\omega,\sigma\tau}^{2_{\sigma\tau}^*-2} ,\] 
		where $k(s)$ is given in Lemma \ref{log}. Note that $\delta \in \left( 0 , 1\right)$ and $k\left(\sigma\tau\right)\rightarrow 1$ as $\sigma \rightarrow 0 ^ { + }$. Thus there is $s _ { 1 } \in \left( 0 ,  \tilde{s} _ { 0 }\right)$ such that for $s \in \left( 0 , s_1\right) ,$ 
		\[\eta: = \min _ { \tau \in ( 0 , 1 ) } \varphi _ { s } ( \tau ) \in ( 0 , 1 ) .\] 
		By (\ref{dx}) we obtain
		\[\int _ { 0 } ^ { 1 } \int _ { \mathbb { R } ^ { N } } \varphi _ { s } ( \tau ) \left(1+| \xi |^2\right) ^ { s \tau } | \widehat {v }_s ( \xi ) | ^ { 2 }\ln \left( 1+| \xi | ^ { 2 } \right)  d \xi d \tau \leq C _ { 2 } .\] 
		Thus there exists $C=C\left(C_0,\Omega\right)>0$ such that
		\[\begin{aligned}
			\mathcal{ E }_{\omega}\left(v_s,v_s\right)=&\int_{\mathbb{ R }^N}\ln\left(1+|\xi|^2\right)  | \widehat { v }_s ( \xi ) | ^ { 2 } d \xi\\\le &
			\frac{1}{\eta}\int _ { 0 } ^ { 1 } \int _ { \mathbb { R } ^ { N } } \varphi _ { s } ( \tau ) \left(1+| \xi |^2\right) ^ { s \tau } \ln \left( 1+| \xi | ^ { 2 } \right) | \widehat { v }_s ( \xi ) | ^ { 2 } d \xi d \tau\le C.
		\end{aligned}\]
		For $s \in \left[ s_1 , s_0 \right) ,$ the result follows from Lemmas \ref{euw1}.
	\end{proof}
	
	\begin{prop}\label{feiczy}
		Let $u_s\in \mathcal{N}_{\omega,s}$ be least-energy solutions of (\ref{fenshujie}), $p_s$ and $\tau_s$ are defined in (\ref{psdtj})(\ref{psdtj2}). There is $C=C(\Omega )>0$ such that 
		\[ \|u_s\|^2=\mathcal{ E }_{\omega}\left(u_s,u_s\right)<C, \:\text{for all}\:\: s\in (0,s_0].\]
	\end{prop}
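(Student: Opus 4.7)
The plan is to reduce Proposition \ref{feiczy} to Lemma \ref{beikongzhi} by first producing a uniform bound of the form $\|u_s\|_{\omega,s}^2 \le C_0$ on $(0, s_0]$, and then invoking that lemma with $v_s := u_s$.

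First, substituting the Nehari identity $\|u\|_{\omega,s}^2 = \|u\|_{p_s}^{p_s} + \tau_s\|u\|_2^2$ into the definition (\ref{nengliang1}) of $J_{\omega,s}$ gives the standard identity
\[
J_{\omega,s}(u) = \Big(\tfrac{1}{2} - \tfrac{1}{p_s}\Big)\|u\|_{p_s}^{p_s}, \qquad u \in \mathcal{N}_{\omega,s},
\]
whose prefactor is strictly positive since $p_s > 2$. Fixing once and for all a test function $v \in C_c^\infty(\Omega)\setminus\{0\}$, Lemma \ref{fenshuzuida} ensures $t_v^s v \in \mathcal{N}_{\omega,s}$ and $T := \sup_{s \in [0, s_0]} t_v^s < \infty$. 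The least-energy property of $u_s$ yields
\[
\Big(\tfrac{1}{2} - \tfrac{1}{p_s}\Big)\|u_s\|_{p_s}^{p_s} = J_{\omega,s}(u_s) \le J_{\omega,s}(t_v^s v) = \Big(\tfrac{1}{2} - \tfrac{1}{p_s}\Big)(t_v^s)^{p_s}\|v\|_{p_s}^{p_s}.
\]
Dividing through by the positive prefactor and noting that $(t_v^s)^{p_s}$ and $\|v\|_{p_s}^{p_s} \le \|v\|_\infty^{p_s}|\Omega|$ are each bounded uniformly in $s \in (0, s_0]$, one concludes $\|u_s\|_{p_s}^{p_s} \le C_1$ for some $C_1$ independent of $s$.

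To upgrade this $\|\cdot\|_{p_s}$ bound to a bound on $\|\cdot\|_{\omega,s}$, I feed the estimate back into the Nehari identity together with the eigenvalue inequality $\|u_s\|_2^2 \le (\lambda_{1,s}^\omega)^{-1}\|u_s\|_{\omega,s}^2$ coming from (\ref{diyite}). Writing $\tau_s^+ := \max(\tau_s, 0)$, this produces
\[
\Big(1 - \frac{\tau_s^+}{\lambda_{1,s}^\omega}\Big)\|u_s\|_{\omega,s}^2 \le \|u_s\|_{p_s}^{p_s} \le C_1.
\]
Since $\tau_s = o(s)$ by (\ref{psdtj2}) and $\lambda_{1,s}^\omega > 1$ with $\lambda_{1,s}^\omega \to 1$ as $s \to 0^+$ by (\ref{1da}) and (\ref{diyige}), the prefactor extends continuously to $1$ at $s = 0$ and remains strictly positive on $(0, s_0]$ by the hypothesis $\tau_s < \lambda_{1,s}^\omega$; compactness of $[0, s_0]$ then gives $\eta := \inf_{s \in (0, s_0]}\big(1 - \tau_s^+/\lambda_{1,s}^\omega\big) > 0$, whence $\|u_s\|_{\omega,s}^2 \le C_1/\eta =: C_0$ uniformly in $s$. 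Applying Lemma \ref{beikongzhi} with $v_s := u_s$ and this uniform bound completes the argument.

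The main obstacle I anticipate is the second stage, namely verifying that $1 - \tau_s^+/\lambda_{1,s}^\omega$ enjoys a uniform positive lower bound on $(0, s_0]$; this rests on a soft continuity/compactness argument that uses the \emph{strict} assumption $\tau_s < \lambda_{1,s}^\omega$ together with the limits $\tau_s \to 0$ and $\lambda_{1,s}^\omega \to 1$ at $s = 0$. All other steps are routine Nehari manipulations, and the only nontrivial input beyond them is Lemma \ref{fenshuzuida} supplying a single competitor $v$ that works uniformly across $s$.
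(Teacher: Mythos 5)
Your overall strategy coincides with the paper's: on the Nehari manifold $J_{\omega,s}(u)=(\tfrac12-\tfrac1{p_s})\|u\|_{p_s}^{p_s}$, so the least-energy property tested against the single competitor $t_v^s v$ supplied by Lemma \ref{fenshuzuida} gives a uniform bound on $\|u_s\|_{p_s}^{p_s}$, one then bounds $\|u_s\|_{\omega,s}^2$ via the Nehari identity, and Lemma \ref{beikongzhi} finishes. The only place you diverge is the middle step. The paper converts the $L^{p_s}$ bound into a bound on $\tau_s\|u_s\|_2^2$ directly by H\"older, $\|u_s\|_2^2\le\|u_s\|_{p_s}^2|\Omega|^{1-2/p_s}$, so that $\|u_s\|_{\omega,s}^2=\|u_s\|_{p_s}^{p_s}+\tau_s\|u_s\|_2^2$ is bounded using nothing more than the boundedness of $\tau_s$ (which is immediate from $\tau\in C^1([0,s_0])$, indeed $\tau_s=o(s)$). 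You instead absorb the $\tau_s\|u_s\|_2^2$ term via the eigenvalue inequality from (\ref{diyite}) and then need $\eta:=\inf_{s\in(0,s_0]}\bigl(1-\tau_s^+/\lambda_{1,s}^\omega\bigr)>0$. That is the one soft spot: your compactness argument for $\eta>0$ tacitly uses continuity (or at least lower semicontinuity) of $s\mapsto\lambda_{1,s}^\omega$ on $(0,s_0]$, which the paper never establishes --- as an infimum of continuous functions of $s$ it is only upper semicontinuous a priori, and upper semicontinuity goes the wrong way for bounding $1-\tau_s^+/\lambda_{1,s}^\omega$ from below near an interior point where $\tau(s^*)>1$ is close to $\lambda_{1,s^*}^\omega$. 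Since $\lambda_{1,s}^\omega>1$ and $\tau_s\to0$ this is harmless near $s=0$, but away from $0$ it needs justification. The fix is trivial (just use the paper's H\"older step, which requires no lower bound on the prefactor at all), so this is a repairable delicacy rather than a fatal error; everything else in your argument is correct.
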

	
	\begin{proof}
		Let $\varphi \in C_c^{\infty}(\Omega )\setminus \left\{0\right\},$ note that 
		\[	J_{\omega,s}\left(u_s\right)=\Big(\frac{1}{2}-\frac{1}{p_s}\Big) \|u_s\|_{p_s}^{p_s}.\]
		By Lemma \ref{fenshuzuida} we obtain
		\[\begin{aligned}
			 \|u_s\|_{p_s}^{p_s}= \inf _ { v \in \mathcal { N } _ {\omega, s } } \|v\|_{p_s}^{p_s}\le \left(t_{\varphi}^s\right)^2  \|\varphi\|_{p_s}^{p_s} \le \sup _ { s \in \left( 0 , s_0 \right] }\left( t _ { \varphi } ^ { s }\right) ^{2}  \|\varphi\|_{p_s}^{p_s}:=C_0.		
		\end{aligned}\]
		Since
		\[\| u_s \| _ { \omega,s } ^ { 2 } =  \|u_s\|_{ p _ { s }} ^ { p _ { s } }+\tau_{s} \|u_s\|_2^2 ,\: \|u_s\|_2^2 \le  \|u_s\|_{p_s}^2\left(m(\Omega )\right)^{1-\frac{2}{p_s}} \le C_1.\]
		Finally, the desired result follows from Lemma \ref{beikongzhi}.
	\end{proof}

	\section{Existence of a least-energy solution}
	
	In this section, we first establish the Mountain-Pass structure for problems (\ref{fenshujie}) and (\ref{case1}). Using this structure, we demonstrate the uniform boundedness of the sequence $\left\{u_n\right\}$ through the functional sequence. Furthermore, we prove that the functional $J_{\ln}$ satisfies the Palais-Smale condition at the level $c$. With these results in hand, we proceed to prove Theorem \ref{feshujiedl}, Theorem \ref{hx} and Proposition \ref{youjiexingd}.

	\subsection{The Mountain Pass structure and PS condition }
	
	\begin{lemma}\label{fenshujiemoun}
		It holds that 
		\begin{equation}\label{1ws}
			\inf_{\mathcal { N } _ { \omega,s }}J_{\omega,s}=\inf_{\sigma\in\mathcal { T } _ { w }^{s}}\max_{t\in[0,1]}J_{\omega,s}(\sigma(t)):=c_s^{\omega}>0,
		\end{equation}
		where $\mathcal { T } _ { w }^{s}\!:=\{\sigma\in C^0([0,1],\mathcal{H}_{\omega}^{s}(\Omega)):\sigma(0)=0,\ \sigma(1)\neq0,\ J_{\omega,s}(\sigma(1))< 0\}.$
	\end{lemma}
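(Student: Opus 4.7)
\textbf{Proof proposal for Lemma \ref{fenshujiemoun}.}

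The plan is the standard Nehari/mountain-pass equivalence, split into three steps: positivity of the Nehari infimum, the inequality $c_s^\omega \le \inf_{\mathcal{N}_{\omega,s}} J_{\omega,s}$ obtained by constructing explicit admissible paths through any Nehari point, and the reverse inequality obtained by showing every path in $\mathcal{T}_w^s$ must cross $\mathcal{N}_{\omega,s}$.

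First, for any $u\in\mathcal{N}_{\omega,s}$ the Nehari identity $\|u\|_{\omega,s}^2=\|u\|_{p_s}^{p_s}+\tau_s\|u\|_2^2$ reduces the energy to
\[J_{\omega,s}(u)=\Bigl(\tfrac{1}{2}-\tfrac{1}{p_s}\Bigr)\|u\|_{p_s}^{p_s}.\]
Since $p_s>2$ and, by Lemma \ref{fenshuns}, $\|u\|_{p_s}\ge C_1>0$ uniformly in $\mathcal{N}_{\omega,s}$, this already yields $\inf_{\mathcal{N}_{\omega,s}}J_{\omega,s}>0$.

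Next, given $u\in\mathcal{N}_{\omega,s}$, Lemma \ref{fenshuzuida} tells us that $\alpha_u(\eta)=J_{\omega,s}(\eta u)$ attains its unique maximum at $\eta=t_u^s=1$, and since $p_s>2$ we have $\alpha_u(\eta)\to-\infty$ as $\eta\to\infty$. Picking $T>1$ with $J_{\omega,s}(Tu)<0$ and setting $\sigma(t):=tTu$ produces an element of $\mathcal{T}_w^s$ with $\max_{t\in[0,1]}J_{\omega,s}(\sigma(t))=\alpha_u(1)=J_{\omega,s}(u)$. This gives $c_s^\omega\le J_{\omega,s}(u)$ for every $u\in\mathcal{N}_{\omega,s}$.

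For the reverse inequality, introduce, for each $\sigma\in\mathcal{T}_w^s$, the continuous function
\[\phi(t):=\|\sigma(t)\|_{\omega,s}^2-\|\sigma(t)\|_{p_s}^{p_s}-\tau_s\|\sigma(t)\|_2^2,\qquad t\in[0,1].\]
A short algebraic identity gives $\phi(v)=2J_{\omega,s}(v)+\bigl(\tfrac{2}{p_s}-1\bigr)\|v\|_{p_s}^{p_s}$, so $J_{\omega,s}(\sigma(1))<0$ together with $p_s>2$ forces $\phi(1)<0$. On the other hand, letting $t_1:=\sup\{t\in[0,1]:\sigma(t)=0\}<1$, the continuity of $\sigma$ and the lower-bound computation used in the proof of Lemma \ref{fenshuns} (writing $\phi(\sigma(t))\ge\|\sigma(t)\|_{p_s}^2\bigl(c-\|\sigma(t)\|_{p_s}^{p_s-2}\bigr)$ with $c>0$) show $\phi(t)>0$ for $t$ slightly larger than $t_1$. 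By the intermediate value theorem some $t_*\in(t_1,1)$ satisfies $\phi(t_*)=0$; by construction $\sigma(t_*)\neq0$, hence $\sigma(t_*)\in\mathcal{N}_{\omega,s}$, so
\[\max_{t\in[0,1]}J_{\omega,s}(\sigma(t))\ge J_{\omega,s}(\sigma(t_*))\ge\inf_{\mathcal{N}_{\omega,s}}J_{\omega,s}.\]
Taking infimum over $\sigma$ yields $c_s^\omega\ge\inf_{\mathcal{N}_{\omega,s}}J_{\omega,s}$, closing the equality.

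The only genuinely delicate step is the crossing argument: showing the path must leave the set $\{\phi>0\}$ at a nonzero point. The uniform quantitative estimate from Lemma \ref{fenshuns} handles the behavior near zero, while the identity relating $\phi$ and $J_{\omega,s}$ handles the far endpoint, so the obstacle is purely bookkeeping around the possibility that the path momentarily returns to the origin.
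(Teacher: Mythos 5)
Your proof is correct and follows essentially the same route as the paper's: positivity of $c_s^\omega$ via the uniform lower bound of Lemma \ref{fenshuns}, the inequality $c_s^\omega\le\inf_{\mathcal{N}_{\omega,s}}J_{\omega,s}$ via the ray $t\mapsto tTu$ and the uniqueness of the maximum at $t_u^s=1$ from Lemma \ref{fenshuzuida}, and the reverse inequality by forcing every admissible path to cross $\mathcal{N}_{\omega,s}$. The only divergence is in the crossing step: the paper introduces an auxiliary functional $\Gamma(v)=\exp\bigl(\tau_s\|v\|_2^2+\|v\|_{p_s}^{p_s}-\mathcal{E}_{\omega,s}(v,v)\bigr)$ and asserts $\Gamma(\sigma(0))=0$, which does not hold as written (the formula gives $\Gamma(0)=1$), whereas you apply the intermediate value theorem directly to $\phi(t)=\|\sigma(t)\|_{\omega,s}^2-\|\sigma(t)\|_{p_s}^{p_s}-\tau_s\|\sigma(t)\|_2^2$, combining the identity $\phi=2J_{\omega,s}+\bigl(\tfrac{2}{p_s}-1\bigr)\|\cdot\|_{p_s}^{p_s}$ at the endpoint with the quantitative lower bound of Lemma \ref{fenshuns} near the last zero of the path; this version is fully rigorous and in effect repairs the small slip in the paper's argument.
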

	
	\begin{proof}
		For every $v \in \mathcal{N}_{\omega,s},$ there exists $r_{v}>t_v^s>0$ such that $J_{\omega,s}\left(r_{v}v\right)<0.$ Set $\sigma_v(t):=tr_{v}v\in \mathcal{T}_{\omega}^s.$ By lemma \ref{fenshuzuida} we obtain $\max_{t\in[0,1]}J_{\omega,s}(\sigma_v(t))=J_{\omega,s}(t_v^{s}v).$ 
		
		\noindent Note that for $ v \in \mathcal{N}_{\omega,s},$ we have $ t_v^s=1,$ so
		\[\inf_{\sigma\in\mathcal{T}_{\omega}^s}\max_{t\in[0,1]}J_{\omega,s}(\sigma(t))\leq\inf_{v\in\mathcal{N}_{\omega,s}}\max_{t\in[0,1]}J_{\omega,s}(\sigma_v(t))=\inf_{v\in\mathcal{N}_{\omega,s}}J_{\omega,s}(v).\] 
		On the other hand, let $\Gamma:\mathcal{H}_\omega^{s}(\Omega)\rightarrow \mathbb{ R }$ be given by 
		$$\Gamma(v):=\exp\big(\tau_s \|v\|_2^2+\|v\|_{p_s}^{p_s}-\mathcal{E}_{\omega,s}(v,v)\big),$$
		then $\Gamma$ is continuous at $v=0.$ Note that
		$\Gamma\left(v\right)=1$ iff $v\in \mathcal{ N }_{\omega,s}.$ Furthermore, if $v\ne 0$ and $J_{\omega,s}\left(v\right)\le 0,$ then $\Gamma\left(v\right)>1.$ Since for every $\sigma \in \mathcal{T}_{\omega}^s,\Gamma\left(\sigma\left(0\right)\right)=0,\Gamma\left(\sigma\left(1\right)\right)>1,$ there exists $t_0\in \left(0,1\right)$ such that $\Gamma\left(\sigma\left(t_0\right)\right)=1,$ so $\sigma\left(t_0\right) \in \mathcal{ N }_{\omega,s}.$ This yields that
		\[\max_{t\in[0,1]}J_{\omega,s}(\sigma(t))\ge J_{\omega,s}(\sigma(t_0))\ge \inf_{\mathcal{N}_{\omega,s}}J_{\omega,s}, \]
		Therefore,
		\[\inf_{\sigma\in\mathcal{T}_{\omega}^s}\max_{t\in[0,1]}J_{\omega,s}(\sigma(t))\ge\inf_{\mathcal{N}_{\omega,s}}J_{\omega,s}.\]
		
		It is obvious that $c_\omega^s\ge 0,$ suppose $c_\omega^s=0,$ then there exists a sequence $\left\{u_n\right\}\subset \mathcal{ N}_{\omega,s}$ such that $J_{\omega,s}(u_n) \rightarrow 0.$ Note that,
		\[J_{\omega,s}(u_n)=\Big(\frac{1}{2}-\frac{1}{p_s}\Big)\|u_n\|_{p_s}^{p_s}\rightarrow 0,\]
		so $\|u_n\|_2 \rightarrow 0,$ thus $\|u_n\|_{\omega,s}\rightarrow 0,$ this is inconsistent with Lemma \ref{fenshuns}.
	\end{proof}
	
	\begin{lemma}\label{mp}
		It holds that 
		\[\inf_{\mathcal{N}}J_{\ln}=\inf_{\sigma\in\mathcal{T}}\max_{t\in[0,1]}J_{\ln}(\sigma(t)):=c>0,\]
		where $\mathcal{T}:=\{\sigma\in C^0([0,1],\mathcal{H}_0^{\ln}(\Omega)):\sigma(0)=0,\sigma(1)\neq0,J_{\ln}(\sigma(1))< 0\}.$
	\end{lemma}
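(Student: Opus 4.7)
The plan is to mimic the three-step argument of Lemma \ref{fenshujiemoun}, adapted to the logarithmic setting: (i) for every $v \in \mathcal{N}$, construct a path $\sigma_v \in \mathcal{T}$ with $\max_{t \in [0,1]} J_{\ln}(\sigma_v(t)) = J_{\ln}(v)$, giving $\inf_{\sigma \in \mathcal{T}} \max_t J_{\ln}(\sigma(t)) \le \inf_{\mathcal{N}} J_{\ln}$; (ii) show every $\sigma \in \mathcal{T}$ must cross $\mathcal{N}$, giving the reverse inequality; (iii) rule out $c = 0$ using Lemma \ref{zx}.

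For (i), I would invoke Lemma \ref{zdz}: when $v \in \mathcal{N}$, the fibering $s \mapsto J_{\ln}(sv)$ attains its unique maximum at $t_v^0 = 1$. Expanding $J_{\ln}(sv)$, the term $-\tfrac{k s^2}{2}(\ln s)\|v\|_2^2$ forces $J_{\ln}(sv) \to -\infty$ as $s \to \infty$ (here $k > 0$ is essential), so one may choose $r_v > 1$ with $J_{\ln}(r_v v) < 0$ and define $\sigma_v(t) := t r_v v$. Then $\sigma_v \in \mathcal{T}$ and $\max_{t \in [0,1]} J_{\ln}(\sigma_v(t)) = J_{\ln}(v)$.

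For (ii), in analogy with the functional $\Gamma$ used for Lemma \ref{fenshujiemoun}, I would introduce
\[
\Gamma(v) := \lambda \|v\|_2^2 + k \int_\Omega v^2 \ln|v|\,dx - \mathcal{E}_\omega(v,v),
\]
which is continuous on $\mathcal{H}_0^{\ln}(\Omega)$ by Lemma \ref{smnd} and the continuity of $\|\cdot\|_2$ and $\mathcal{E}_\omega$, and which vanishes exactly on $\mathcal{N} \cup \{0\}$. A direct rearrangement of the definition of $J_{\ln}$ gives $\Gamma(v) > \tfrac{k}{2}\|v\|_2^2 > 0$ whenever $v \neq 0$ and $J_{\ln}(v) < 0$. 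On the other hand, Pitt's inequality (\ref{pitt}) combined with $k < 4/N$ yields
\[
\Gamma(v) \le \Bigl(\tfrac{Nk}{4} - 1\Bigr) \mathcal{E}_\omega(v,v) + k \|v\|_2^2 \ln \|v\|_2 + C\|v\|_2^2,
\]
so $\Gamma(v) < 0$ for $v \neq 0$ of sufficiently small $\mathcal{H}_0^{\ln}$-norm. The main obstacle is subtle: since $\Gamma(0) = 0$ agrees with its value on $\mathcal{N}$, a naive IVT applied from $\sigma(0) = 0$ to $\sigma(1)$ could produce a zero of $\Gamma \circ \sigma$ at a time where $\sigma$ still vanishes. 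I would handle this by setting $t^* := \sup\{t \in [0,1] : \sigma(t) = 0\}$, which is attained by continuity and satisfies $t^* < 1$ since $\sigma(1) \neq 0$. On $[t^*, 1]$ the composition $t \mapsto \Gamma(\sigma(t))$ starts at $0$, becomes strictly negative for $t$ slightly larger than $t^*$ (where $\sigma(t)$ is nonzero and small), and reaches the strictly positive value $\Gamma(\sigma(1))$ at $t = 1$; the intermediate value theorem then produces $t_0 \in (t^*, 1)$ with $\sigma(t_0) \in \mathcal{N}$, so $\max_t J_{\ln}(\sigma(t)) \ge J_{\ln}(\sigma(t_0)) \ge \inf_{\mathcal{N}} J_{\ln}$.

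Step (iii) is then short: using the Nehari constraint to eliminate $\int u^2 \ln|u|\,dx$ from $J_{\ln}$ gives the clean identity $J_{\ln}(u) = \tfrac{k}{4}\|u\|_2^2$ for all $u \in \mathcal{N}$. Thus $c = 0$ would produce $u_n \in \mathcal{N}$ with $\|u_n\|_2 \to 0$, contradicting the uniform lower bound $\|u\|_2 \ge C_1 > 0$ on $\mathcal{N}$ provided by Lemma \ref{zx}, and hence $c > 0$.
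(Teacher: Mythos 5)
Your proof is correct and follows essentially the same three-step argument as the paper: fibering paths via Lemma \ref{zdz} for the inequality $\le$, an intermediate-value crossing of $\mathcal{N}$ for $\ge$, and Lemma \ref{zx} together with the identity $J_{\ln}(u)=\frac{k}{4}\|u\|_2^2$ on $\mathcal{N}$ to exclude $c=0$. The only difference is in packaging the crossing step: the paper uses the normalized, exponentiated functional $\Gamma(v)=\exp\bigl(\bigl(\lambda\|v\|_2^2+\tfrac{k}{2}\int_\Omega v^2\ln v^2\,dx-\mathcal{E}_\omega(v,v)\bigr)/\|v\|_2^2\bigr)$, which by Pitt's inequality extends continuously by $0$ at the origin and equals $1$ exactly on $\mathcal{N}$, so the intermediate value theorem applies directly on $[0,1]$, whereas your unnormalized $\Gamma$ combined with the $t^*$ device and the sign analysis near the origin accomplishes the same thing.
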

	
	\begin{proof}
		For every $v \in \mathcal{N},$ there exists $r_{v}>t_v^0>0$ such that $J_{\ln}\left(r_{v}v\right)<0.$ Set $\sigma_v(t):=tr_{v}v\in \mathcal{T}.$ By lemma \ref{zdz} we obtain $\max_{t\in[0,1]}J_{\ln}(\sigma_v(t))=J_{\ln}(v),$ so
		\[\inf_{\sigma\in\mathcal{T}}\max_{t\in[0,1]}J_{\ln}(\sigma(t))\leq\inf_{v\in\mathcal{N}}\max_{t\in[0,1]}J_{\ln}(\sigma_v(t))=\inf_{v\in\mathcal{N}}J_{\ln}(v).\] 
		On the other hand, let $\Gamma:\mathcal{H}_0^{\ln}(\Omega)\rightarrow \mathbb{ R }$ be given by 
		$$\Gamma(v):=\begin{cases}\exp\Big(\frac{\lambda\int_\Omega v^2dx+\frac{k}{2}\int_{\Omega}v^2\ln v^2dx-\mathcal{E}_{\omega}(v,v)}{\|v\|_2^2}\Big),&\text{if}\:v\neq0,\\[1mm]
		0,&\text{if}\:v=0.\end{cases}$$
		By (\ref{pitt}), we have
		\[\frac{\lambda\int_\Omega v^2dx+\frac{k}{2}\int_{\Omega}v^2\ln v^2dx-\mathcal{E}_{\omega}(v,v)}{\|v\|_2^2} \le \frac{4}{N}\ln\|v\|_2+a_N+\lambda,\]
		so $\Gamma$ is continuous at $v=0.$ Note that
		$\Gamma\left(v\right)=1$ iff $v\in \mathcal{ N }.$ Furthermore, if $v\ne 0$ and $J_{\ln}\left(v\right)\le 0,$ then $\Gamma\left(v\right)>1.$ Since for every $\sigma \in \mathcal{T},\Gamma\left(\sigma\left(0\right)\right)=0,\Gamma\left(\sigma\left(1\right)\right)>1,$ there exists $t_0\in \left(0,1\right)$ such that $\Gamma\left(\sigma\left(t_0\right)\right)=1,$ so $\sigma\left(t_0\right) \in \mathcal{ N }.$ This yields that
		\[\max_{t\in[0,1]}J_{\ln}(\sigma(t))\ge J_{\ln}(\sigma(t_0))\ge \inf_{\mathcal{N}}J_{\ln}. \]
		Therefore,
		\[\inf_{\sigma\in\mathcal{T}}\max_{t\in[0,1]}J_{\ln}(\sigma(t))\ge\inf_{\mathcal{N}}J_{\ln}.\]
		
		It is obvious that $c\ge 0,$ suppose $c=0,$ then there exists a sequence $\left\{u_n\right\}\subset \mathcal{ N}$ such that $J_{\ln}(u_n) \rightarrow 0.$ By lemma \ref{zx} we obtain $\|u_n\|_2\ge C_1>0.$ However,
		\[J_{\ln}(u_n)=\frac{k}{4}\|u_n\|_2^2\ge \frac{k}{4}C_1>0,\]
		which is a contradiction, thus we have $c>0.$
	\end{proof}
	
	\begin{prop}\label{yjx1}
		If  $\left\{u_n\right\}\subset \mathcal{ N}_{\omega,s}$ and $\sup_{n\in \mathbb{ N }}J_{\omega,s}(u_n)\le M$ for some $M>0,$ then $\left\{u_n\right\}$ is uniformly bounded in $\mathcal{H}_\omega^{s}(\Omega).$
	\end{prop}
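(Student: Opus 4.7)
The plan is to exploit the fact that the Nehari identity collapses $J_{\omega,s}$ on $\mathcal{N}_{\omega,s}$ to a single term proportional to $\|\cdot\|_{p_s}^{p_s}$, and then feed this back into the Nehari identity together with the eigenvalue inequality $\|u\|_2^2 \le (\lambda_{1,s}^\omega)^{-1}\|u\|_{\omega,s}^2$ to close off the estimate.

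The first step is to observe that for $u_n \in \mathcal{N}_{\omega,s}$ the defining identity
\[
\|u_n\|_{\omega,s}^2 = \|u_n\|_{p_s}^{p_s} + \tau_s\|u_n\|_2^2
\]
lets us rewrite
\[
J_{\omega,s}(u_n) = \frac{1}{2}\|u_n\|_{\omega,s}^2 - \frac{1}{p_s}\|u_n\|_{p_s}^{p_s} - \frac{\tau_s}{2}\|u_n\|_2^2 = \left(\frac{1}{2}-\frac{1}{p_s}\right)\|u_n\|_{p_s}^{p_s}.
\]
Since $p_s>2$ and $J_{\omega,s}(u_n)\le M$, this immediately gives a uniform bound
\[
\|u_n\|_{p_s}^{p_s} \le \frac{2p_s}{p_s-2}M.
\]

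The second step is to plug this back into the Nehari identity to control $\|u_n\|_{\omega,s}^2$. If $\tau_s \le 0$, the term $\tau_s\|u_n\|_2^2$ is non-positive and the bound is immediate. For $\tau_s\in(0,\lambda_{1,s}^\omega)$ (which covers both the subcritical regime and the critical regime $\tau_s\in(1,\lambda_{1,s}^\omega)$), I would invoke the variational characterization of the first Dirichlet eigenvalue in \eqref{diyite}, namely $\|u_n\|_2^2 \le (\lambda_{1,s}^\omega)^{-1}\|u_n\|_{\omega,s}^2$, to obtain
\[
\|u_n\|_{\omega,s}^2 \le \|u_n\|_{p_s}^{p_s} + \frac{\tau_s}{\lambda_{1,s}^\omega}\|u_n\|_{\omega,s}^2,
\]
so that, using $\tau_s<\lambda_{1,s}^\omega$,
\[
\left(1-\frac{\tau_s}{\lambda_{1,s}^\omega}\right)\|u_n\|_{\omega,s}^2 \le \|u_n\|_{p_s}^{p_s} \le \frac{2p_s}{p_s-2}M,
\]
which is the desired uniform bound.

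There is no substantive obstacle here, since the hypothesis $\tau_s<\lambda_{1,s}^\omega$ is precisely what makes the coefficient $1-\tau_s/\lambda_{1,s}^\omega$ strictly positive and the estimate work. The only point requiring a small amount of care is that the statement is uniform in $n$ for a fixed $s$, so one does not need to worry about the behaviour of the constant as $s\to 0^+$; otherwise one would have to use the refinements supplied by Lemma \ref{beikongzhi} and Proposition \ref{feiczy}, but these are not needed for the present proposition.
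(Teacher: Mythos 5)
Your proposal is correct and follows essentially the same route as the paper: both arguments first use the Nehari identity to collapse $J_{\omega,s}(u_n)$ to $\left(\tfrac{1}{2}-\tfrac{1}{p_s}\right)\|u_n\|_{p_s}^{p_s}$, obtaining the uniform $L^{p_s}$ bound, and then absorb the $\tau_s\|u_n\|_2^2$ term via the eigenvalue inequality $\|u_n\|_2^2\le(\lambda_{1,s}^\omega)^{-1}\|u_n\|_{\omega,s}^2$, with the coefficient $1-\tau_s/\lambda_{1,s}^\omega>0$ closing the estimate. That you substitute into the Nehari constraint while the paper substitutes into the energy functional is only an algebraic rearrangement of the same identities.
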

	
	\begin{proof}
		Since $\left\{u_n\right\}\subset \mathcal{ N}_{\omega,s}$ and $\sup_{n\in \mathbb{ N }}J_{\omega,s}(u_n)\le M$,
		\[J_{\omega,s}(u_n)=\Big(\frac{1}{2}-\frac{1}{p_s}\Big)\|u_n\|_{p_s}^{p_s}\le M.\]
		By (\ref{diyite}), for $\tau_s\in \left(0,\lambda_{1,s}^{\omega}\right)$ we obtain 
		\[	J_{\omega,s}(u_n)\ge \frac{1}{2}\|u_n\|_{\omega,s}^2-\frac{1}{p_s}\|u_n\|_{p_s}^{p_s}-\frac{\tau_s}{2\lambda_{1,s}^{\omega}}\|u_n\|_{\omega,s}^2.\]
		Thus we get that
		\[M\ge J_{\omega,s}(u_n)\ge \frac{1}{2}\Big(1-\frac{\tau_s}{\lambda_{1,s}^\omega}\Big)\|u_n\|_{\omega,s}^2-\frac{1}{p_s}\|u_n\|_{p_s}^{p_s}.\]
		For $\tau_s\le 0,$ we have 
		\[M\ge J_{\omega,s}(u_n)\ge \frac{1}{2}\|u_n\|_{\omega,s}^2-\frac{1}{p_s}\|u_n\|_{p_s}^{p_s}.\]
		We complete the proof.
	\end{proof}

	\begin{prop}\label{yjxx}
		If $\left\{u_n\right\}\subset \mathcal{ N}$ and $\sup_{n\in \mathbb{ N }}J_{\ln}(u_n)\le M$ for some $M>0$, then $\left\{u_n\right\}$ is bounded in $\mathcal{H}_0^{\ln}(\Omega).$
	\end{prop}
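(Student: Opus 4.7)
The plan is to exploit the Nehari identity to first reduce the problem to controlling $\|u_n\|_2$, and then apply Pitt's inequality (Proposition \ref{pit}) together with the subcriticality $k<\tfrac{4}{N}$ to bootstrap this into a bound on $\mathcal{E}_\omega(u_n,u_n)$.

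First I would rewrite $J_{\ln}(u_n)$ on the Nehari manifold. Since $u_n\in\mathcal{N}$, the identity
\[k\int_{\Omega}u_n^2\ln|u_n|\,dx=\mathcal{E}_{\omega}(u_n,u_n)-\lambda\|u_n\|_2^2\]
holds, and substituting this into (\ref{nengliang2}) (using $\int_\Omega u_n^2\ln u_n^2\,dx=2\int_\Omega u_n^2\ln|u_n|\,dx$) the $\mathcal{E}_\omega$-term and the $\lambda$-term both cancel, leaving the clean identity $J_{\ln}(u_n)=\tfrac{k}{4}\|u_n\|_2^2$. The assumed upper bound $J_{\ln}(u_n)\le M$ thus forces $\|u_n\|_2^2\le 4M/k$. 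Combined with the uniform lower bound $\|u_n\|_2\ge C_1>0$ furnished by Lemma \ref{zx}, I conclude that $\{\|u_n\|_2\}$ stays in a compact subinterval of $(0,\infty)$, so in particular $\|u_n\|_2^2\ln\|u_n\|_2$ is uniformly bounded.

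Next I would feed Pitt's inequality (\ref{pitt}) and the Nehari identity into each other. Pitt's inequality reads
\[\frac{4}{N}\int_{\Omega}u_n^2\ln|u_n|\,dx\le \mathcal{E}_\omega(u_n,u_n)+\frac{4}{N}\|u_n\|_2^2\ln\|u_n\|_2+a_N\|u_n\|_2^2,\]
and replacing the left-hand integral by $\tfrac{1}{k}\bigl(\mathcal{E}_\omega(u_n,u_n)-\lambda\|u_n\|_2^2\bigr)$ yields
\[\left(\frac{4}{Nk}-1\right)\mathcal{E}_\omega(u_n,u_n)\le \frac{4}{N}\|u_n\|_2^2\ln\|u_n\|_2+\Bigl(a_N+\frac{4\lambda}{Nk}\Bigr)\|u_n\|_2^2.\]
Since $k\in(0,4/N)$, the prefactor $\tfrac{4}{Nk}-1$ is strictly positive, and the right-hand side is uniformly bounded by the previous step. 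This yields a uniform bound on $\mathcal{E}_\omega(u_n,u_n)=\|u_n\|_{\mathcal{H}_0^{\ln}(\Omega)}^2$, as required.

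The step I expect to matter most is the sign of $\tfrac{4}{Nk}-1$: it is precisely the subcritical condition $k<\tfrac{4}{N}$ that lets Pitt's inequality absorb the Nehari constraint and yield a bound rather than a tautology. In the borderline case $k=\tfrac{4}{N}$, the coefficients of $\mathcal{E}_\omega(u_n,u_n)$ on the two sides coincide, the argument collapses, and one cannot deduce boundedness from $J_{\ln}(u_n)\le M$ alone — this is the obstruction already flagged in the introduction and the reason critical-$k$ existence remains open.
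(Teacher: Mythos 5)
Your proof is correct and follows essentially the same route as the paper: both arguments use the Nehari identity to reduce $J_{\ln}(u_n)$ to $\tfrac{k}{4}\|u_n\|_2^2$ (bounding $\|u_n\|_2$), and then invoke Pitt's inequality with the strict positivity of the coefficient coming from $k<\tfrac{4}{N}$ to absorb the logarithmic term and bound $\mathcal{E}_\omega(u_n,u_n)$. The paper merely organizes the algebra differently, inserting Pitt's inequality into the expression for $J_{\ln}$ rather than into the Nehari constraint, and the two conditions $\tfrac{4}{Nk}-1>0$ and $\tfrac12-\tfrac{kN}{8}>0$ are identical.
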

	
	\begin{proof}
		Since$\left\{u_n\right\}\subset \mathcal{ N}$ and $\sup_{n\in \mathbb{ N }}J_{\ln}(u_n)\le M$
		\[	J_{\ln}(u_n)=\frac{k}{4}\|u_n\|_2^2\le M.\]
		By (\ref{pitt}), we obtain that
		\[J_{\ln}(u_n)\ge \Big(\frac{1}{2}-\frac{kN}{8}\Big)\mathcal{E}_{\omega}(u_n,u_n)+\Big(\frac{k}{4}-\frac{\lambda}{2}-\frac{kN}{8}a_N\Big)\|u_n\|_2^2-\frac{k}{4} \ln \left( \|u_n\| _ { 2 } ^ { 2 } \right) \|u_n\| _ { 2 } ^ { 2 }.\]
		There exists positive constant $C$ such that
		\[\Big(\frac{k}{4}-\frac{\lambda}{2}-\frac{kN}{8}a_N\Big)\|u_n\|_2^2-\frac{k}{4} \ln \left( \|u_n\| _ { 2 } ^ { 2 } \right) \|u_n\| _ { 2 } ^ { 2 }\le C,\]
		since $k\in \left(0,\frac{4}{N}\right),$ $\left\{u_n\right\}$ is bounded in $\mathcal{H}_0^{\ln}(\Omega).$
	\end{proof}
	
	\begin{prop}\label{ps}
		Let $k \in \left(0,\frac{4}{N}\right),$ then $J_{\ln}$ satisfies Palais-Smale condition at level $$c=\inf_{v\in\mathcal{N}}J_{\ln}(v), $$
		where $c$ is defined in Lemma \ref{mp}.
	\end{prop}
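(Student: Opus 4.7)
The plan is to show that any Palais--Smale sequence $\{u_n\}\subset\mathcal{H}_0^{\ln}(\Omega)$ at level $c$ admits a strongly convergent subsequence. First I would establish boundedness of $\{u_n\}$ in $\mathcal{H}_0^{\ln}(\Omega)$. A direct computation gives the identity
\[
J_{\ln}(u_n)-\tfrac{1}{2}\langle J'_{\ln}(u_n),u_n\rangle=\tfrac{k}{4}\|u_n\|_2^{2},
\]
which combined with $J_{\ln}(u_n)\to c$ and $\|J'_{\ln}(u_n)\|_{*}\to 0$ yields $\|u_n\|_2^{2}\le C+o(\|u_n\|_{\mathcal{H}_0^{\ln}(\Omega)})$. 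Applying Pitt's inequality \eqref{pitt} to control $\int u_n^{2}\ln|u_n|\,dx$ in the expression of $J_{\ln}(u_n)$ produces
\[
c+o(1)\ge\Bigl(\tfrac{1}{2}-\tfrac{kN}{8}\Bigr)\mathcal{E}_\omega(u_n,u_n)-C\|u_n\|_2^{2}-\tfrac{k}{2}\|u_n\|_2^{2}\ln^{+}\|u_n\|_2.
\]
The coefficient $\tfrac{1}{2}-\tfrac{kN}{8}$ is strictly positive because $k<\tfrac{4}{N}$, and the preceding $L^{2}$ estimate absorbs the remaining terms, giving $\mathcal{E}_\omega(u_n,u_n)$ bounded.

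By the compact embedding $\mathcal{H}_0^{\ln}(\Omega)\hookrightarrow L^{2}(\Omega)$, up to a subsequence $u_n\rightharpoonup u$ in $\mathcal{H}_0^{\ln}(\Omega)$, $u_n\to u$ in $L^{2}(\Omega)$, and $u_n\to u$ a.e. I would next show $J'_{\ln}(u)=0$. For fixed $\varphi\in C_c^{\infty}(\Omega)$, the linear terms in $\langle J'_{\ln}(u_n),\varphi\rangle$ pass to the limit by weak convergence; the delicate point is $\int u_n\varphi\ln|u_n|\,dx\to\int u\varphi\ln|u|\,dx$. I would deduce this from Vitali's convergence theorem: the integrand converges a.e., on $\{|u_n|\le 1\}$ it is uniformly bounded by $\|\varphi\|_\infty/e$, and on $\{|u_n|>A\}$ the elementary bound $|t\ln|t||\le t^{2}$ for $|t|>1$ combined with strong $L^{2}$-convergence yields uniform smallness as $A\to\infty$. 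Density of $C_c^{\infty}(\Omega)$ in $\mathcal{H}_0^{\ln}(\Omega)$ and the continuity of $J'_{\ln}$ (Lemma \ref{kwx}) then give $J'_{\ln}(u)=0$.

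Setting $w_n:=u_n-u$ and combining Lemma \ref{jm}, Lemma \ref{im} and the $L^{2}$ strong convergence, I obtain the Brezis--Lieb type splitting
\[
J_{\ln}(u_n)=J_{\ln}(u)+J_{\ln}(w_n)+o(1).
\]
If $u\equiv 0$, then $\|u_n\|_2\to 0$ and $\langle J'_{\ln}(u_n),u_n\rangle\to 0$, so a further application of Pitt's inequality (with $\|u_n\|_2^{2}\ln\|u_n\|_2\to 0$) forces $\mathcal{E}_\omega(u_n,u_n)\to 0$, contradicting $J_{\ln}(u_n)\to c>0$ established in Lemma \ref{mp}. Hence $u\neq 0$, so $u\in\mathcal{N}$, and a direct manipulation using $\langle J'_{\ln}(u),u\rangle=0$ yields $J_{\ln}(u)=\tfrac{k}{4}\|u\|_2^{2}\ge c$. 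Therefore $J_{\ln}(w_n)\le o(1)$. Applying Pitt once more to $w_n$, and using $\|w_n\|_2\to 0$ (so $\|w_n\|_2^{2}\ln\|w_n\|_2\to 0$), gives
\[
J_{\ln}(w_n)\ge\Bigl(\tfrac{1}{2}-\tfrac{kN}{8}\Bigr)\mathcal{E}_\omega(w_n,w_n)+o(1).
\]
Since $\tfrac{1}{2}-\tfrac{kN}{8}>0$, this forces $\mathcal{E}_\omega(w_n,w_n)\to 0$, i.e. $u_n\to u$ in $\mathcal{H}_0^{\ln}(\Omega)$.

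The main obstacle is twofold. One is that we only have weak convergence of $\{u_n\}$, so Lemma \ref{ryd} does not apply directly; identifying $u$ as a weak solution requires the Vitali-type uniform-integrability argument above, which exploits precisely that $|t\ln|t||\le t^{2}$ for $|t|\ge 1$ together with the strong $L^{2}$-convergence. The other is the cancellation in the final step, which succeeds only because the subcritical condition $k<\tfrac{4}{N}$ makes the constant from Pitt's inequality strictly smaller than the coefficient of the Dirichlet term in $J_{\ln}$; this is why $\tfrac{4}{N}$ is the critical threshold for the method.
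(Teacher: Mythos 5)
Your proof is correct and follows essentially the same route as the paper: boundedness via Pitt's inequality, compact embedding into $L^2(\Omega)$, the Brezis--Lieb splitting from Lemmas \ref{im} and \ref{jm}, and the final cancellation where $k<\tfrac{4}{N}$ makes the Pitt constant beat the coefficient of $\mathcal{E}_\omega$. The only differences are cosmetic: you use a Vitali argument where the paper invokes Lemma \ref{ryd1}, and you rule out $u\equiv 0$ by contradiction where the paper simply notes $\tfrac{k}{4}\|u_n\|_2^2=J_{\ln}(u_n)-\tfrac12\langle J_{\ln}'(u_n),u_n\rangle\to c>0$ and passes to the $L^2$ limit.
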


	\begin{proof}
		Since $J_{\ln}(u_n)\rightarrow c,J_{\ln}^{\prime}(u_n)\rightarrow 0,$ by lemma \ref{yjxx} $\left\{u_n\right\}$ is bounded in $\mathcal{H}_0^{\ln}(\Omega).$ Hence up to a subsequence, suppose $u_n \rightharpoonup u$ in $\mathcal{H}_0^{\ln}(\Omega),$ $u_n \rightarrow u$ in $L^2(\Omega ),$ $u_n \rightarrow u\: a.e.$ in $\Omega.$ 
		
		For any $\varphi \in C_c^{\infty}(\Omega )$ and $n\to+\infty$,  
		\[\begin{aligned}
			\left \langle J_{\ln}^{\prime}(u_n) ,\varphi \right \rangle=\mathcal{E}_{\omega}(u_n,\varphi)-\lambda \int_{\Omega}u_{n}\varphi dx-k\int_{\Omega}\varphi u_{n} \ln|u_n|dx\rightarrow 0.
		\end{aligned}\]
		As $u_n \rightharpoonup u $ in $\mathcal{H}_0^{\ln}(\Omega),$ so $\mathcal{E}_{\omega}(u_n,\varphi)\rightarrow \mathcal{E}_{\omega}(u,\varphi),$ using Lemma \ref{ryd1}, we have
		\[\left \langle J_{\ln}^{\prime}(u) ,\varphi \right \rangle=0,\ \:\forall \varphi \in C_c^{\infty}(\Omega ).\]
		By \cite[Lemma 2.3]{feulefack2023logarithmic}, $C_c^{\infty}(\Omega )$ is dense in $\mathcal{H}_0^{\ln}(\Omega),$ so for $ \varphi \in \mathcal{H}_0^{\ln}(\Omega),$ there exists $\varphi_n \in C_c^{\infty}(\Omega )$ satisfying $\varphi_{n}\rightarrow \varphi$ in $\mathcal{H}_0^{\ln}(\Omega),$ by Lemma \ref{slx1} then
		\[\left \langle J_{\ln}^{\prime}(u) ,\varphi \right \rangle=\lim\limits_{n\rightarrow \infty}\left \langle J_{\ln}^{\prime}(u) ,\varphi_n \right \rangle=0.\]
		According to the definition of weak solution, we know that $u$ is a weak solution of (\ref{case1}). 
		Since $\left\{u_n\right\}$ is bounded, by Lemma \ref{mp} and Lemma \ref{slx1}
		\[J_{\ln}(u_n)-\frac{1}{2}\left \langle J_{\ln}^{\prime}(u_n) ,u_n \right \rangle=\frac{k}{4}\int_{\Omega}u_n^2 dx\rightarrow c>0.\]
		As $u_n \rightarrow u$ in $L^2(\Omega )$ as $n\to+\infty$,  so $u\ne 0, i.e.$ $u$ is a nontrivial solution and $u\in \mathcal{ N}.$
		Note that $\displaystyle J_{\ln}(u)=\frac{k}{4}\int_{\Omega}u^2 dx = \lim_{n\rightarrow \infty}\frac{k}{4}\int_{\Omega}u_{n}^2 dx=c.$
		
		By lemma \ref{im} and lemma \ref{jm}, we obtain that for $n$ large
		\[\begin{aligned}
			J_{\ln}(u_n)&=\frac{d_N}{4}\iint_{x,y\in\mathbb{R}^N}\frac{|u_n(x)-u\left(x\right)-u_n(y)+u(y)|^2}{|x-y|^N}\omega \left(|x-y|\right)\:dx\:dy\\&+
			\frac{d_N}{4}\iint_{x,y\in\mathbb{R}^N}\frac{\left|u(x)-u(y)\right|}{|x-y|^N}\omega \left(|x-y|\right)\:dxdy-\frac{1}{2}\lambda \int_{\Omega}u^2 dx+\frac{k}{4}\int_{\Omega}u^2 dx\\&-\frac{k}{4}\int_{\Omega}u^2 \ln u^2dx- \frac{k}{4}\int_{\Omega}\left(u_n-u\right)^2 \ln\left(u_n-u\right)^2dx+o\left(1\right)\\&=
			J_{\ln}(u)+\frac{1}{2}\|u_n-u\|^2-\frac{k}{4}\int_{\Omega}\left(u_n-u\right)^2 \ln\left(u_n-u\right)^2dx+o\left(1\right).
		\end{aligned}\]
		Therefore,
		$$\|u_n-u\|^2-\frac{k}{2}\int_{\Omega}\left(u_n-u\right)^2 \ln\left(u_n-u\right)^2dx\rightarrow 0.$$
		Up to a subsequence, suppose $\|u_n-u\|^2\rightarrow L\ge 0,$ then $$\int_{\Omega}\left(u_n-u\right)^2 \ln\left(u_n-u\right)^2dx\rightarrow \frac{2}{k}L.$$
		By Pitt's inequality, we obtain that $\left(\frac{4}{Nk}-1\right)L \le 0,$ then $L\le 0,$ so $L=0.$
	\end{proof}
	
	\subsection{Geometry of the functional \texorpdfstring{$J_{\omega,s}$}{J(omega,s)}}

	In this section, we give two lemmas on the geometric features of $J_{\omega,s}$ to prove that the solution of problem (\ref{fenshujie}) is nontrivial when $p_s=2_s^*.$
	
	\begin{lemma}\label{e}
		There exists $e\in \mathcal{ H }_\omega^s(\Omega )$ such that $J_{\omega,s}\left(e\right)<0.$
	\end{lemma}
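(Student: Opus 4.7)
The plan is to fix any nonzero test function $u_0 \in C_c^\infty(\Omega) \subset \mathcal{H}_\omega^s(\Omega)$ and then show that $J_{\omega,s}(t u_0) \to -\infty$ as $t \to +\infty$; setting $e := t u_0$ for $t$ sufficiently large then gives the conclusion.

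More concretely, for $t > 0$ I would expand
\[
J_{\omega,s}(t u_0) = \frac{t^2}{2}\bigl(\|u_0\|_{\omega,s}^2 - \tau_s \|u_0\|_2^2\bigr) - \frac{t^{p_s}}{p_s} \|u_0\|_{p_s}^{p_s}.
\]
Since $p_s > 2$ in both cases covered by \eqref{psts1} and \eqref{psts2}, and since $\|u_0\|_{p_s} > 0$, the second (negative) term grows like $t^{p_s}$ while the first term grows only like $t^2$. Hence $J_{\omega,s}(tu_0) \to -\infty$ as $t \to +\infty$, regardless of the sign of $\|u_0\|_{\omega,s}^2 - \tau_s \|u_0\|_2^2$. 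In particular there exists $t_0 > 0$ such that $e := t_0 u_0$ satisfies $J_{\omega,s}(e) < 0$.

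There is no real obstacle here: the argument is a one-variable scaling argument, and the only fact used beyond the definition of $J_{\omega,s}$ is $p_s > 2$, which is part of the standing assumptions \eqref{psts1}--\eqref{psts2}. Note that this argument is completely uniform in whether we are in the subcritical regime $2 < p_s < 2_s^*$ or the critical regime $p_s = 2_s^*$, so the lemma holds under both hypotheses simultaneously. Moreover, consistency with Lemma \ref{fenshuzuida} is immediate: the optimal scaling $t_u^s$ defined in \eqref{aaaaa} is the unique positive critical point of $\alpha_{u_0}(t) = J_{\omega,s}(tu_0)$, and for any $t > t_u^s$ the function $\alpha_{u_0}$ is strictly decreasing to $-\infty$, so any $t_0$ chosen sufficiently large beyond $t_u^s$ works.
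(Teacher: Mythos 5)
Your proof is correct and follows essentially the same route as the paper: scale a fixed nonzero function, observe that the $-\frac{t^{p_s}}{p_s}\|u_0\|_{p_s}^{p_s}$ term dominates the quadratic terms since $p_s>2$, and take $t$ large. The paper merely normalizes $\|u\|_{\omega,s}=1$ and bounds $\|u\|_2^2$ via $\lambda_{1,s}^{\omega}$ to make the quadratic coefficient explicit, which your version renders unnecessary by noting the sign of that coefficient is irrelevant.
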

	\begin{proof}
		Choose $u\in \mathcal{ H }_\omega^s(\Omega )$ such that $\|u\|_{\omega,s}=1.$ Let $t>0$, we see that
		\[J_{\omega,s}(tu)= \frac{t^2}{2}\|u_n\|_{\omega,s}^{2}-\frac{t^{p_s}}{p_s}\|u\|_{p_s}^{p_s}-\frac{\tau_s }{2}t^2  \|u_n\|_2^2 .\] 
		Since $\lambda_{1,s}^\omega\|u\|_2^2\le \|u\|_{\omega,s}^2=1,$
		\[J_{\omega,s}(tu)\le \frac{1}{2}t^2-\frac{t^{p_s}}{p_s}\|u\|_{p_s}^{p_s}+\frac{1}{2}t^2\frac{|\tau_s |}{\lambda_{1,s}^\omega} .\] 
		Note that $p_s>2$, passing to the limit as $t\rightarrow +\infty$ we get that $J_{\omega,s}(tu)\rightarrow -\infty,$ so taking $e=tu$ with $t$ sufficiently large.
	\end{proof}
	
	\begin{lemma}
		For $p_s=2_s^*$ and $u\in \mathcal{H}_{\omega}^s(\Omega )\setminus \left\{0\right\}$ the following relation holds true:
		\[\sup_{t\ge 0}J_{\omega,s}(tu)=\frac{s}{N}\hat{\kappa}_{N,s}^{\frac{N}{2s}}(u),\]
		where
		\[\hat{\kappa}_{N,s}(u)=\frac{\|u\|_{\omega,s}^2-\tau_{s}\|u\|_2^2}{\|u\|_{2_s^*}^2}.\]
	\end{lemma}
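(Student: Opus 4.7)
The proof plan is to regard $J_{\omega,s}(tu)$ as a function of a single real variable $t\ge 0$ and maximize it explicitly. With $p_s=2_s^*$, abbreviate $A:=\|u\|_{\omega,s}^{2}-\tau_{s}\|u\|_{2}^{2}$ and $B:=\|u\|_{2_s^*}^{2_s^*}$. Under the hypothesis $\tau_s<\lambda_{1,s}^{\omega}$ together with $\lambda_{1,s}^{\omega}\|u\|_{2}^{2}\le\|u\|_{\omega,s}^{2}$, we have $A>0$; also $B>0$ since $u\not\equiv 0$. Writing the energy as
\[
f(t):=J_{\omega,s}(tu)=\frac{A}{2}t^{2}-\frac{B}{2_s^*}t^{2_s^*},
\]
we see that $f(0)=0$ and $f(t)\to-\infty$ as $t\to+\infty$ because $2_s^*>2$.

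Next I would locate the critical point. Differentiating, $f'(t)=t\bigl(A-B t^{2_s^*-2}\bigr)$, which has a unique positive zero
\[
t^{*}=\Bigl(\tfrac{A}{B}\Bigr)^{\frac{1}{2_s^*-2}}=\Bigl(\tfrac{A}{B}\Bigr)^{\frac{N-2s}{4s}},
\]
using the identity $2_s^*-2=\tfrac{4s}{N-2s}$. Since $f'>0$ on $(0,t^{*})$ and $f'<0$ on $(t^{*},\infty)$, the supremum over $t\ge 0$ is attained at $t^{*}$.

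The final step is to evaluate $f(t^{*})$ and match it with the claimed expression. Using $\tfrac{1}{2}-\tfrac{1}{2_s^*}=\tfrac{s}{N}$ one gets
\[
f(t^{*})=\Bigl(\tfrac{1}{2}-\tfrac{1}{2_s^*}\Bigr)A\,(A/B)^{\frac{N-2s}{2s}}=\frac{s}{N}\cdot\frac{A^{N/(2s)}}{B^{(N-2s)/(2s)}}.
\]
Since $B^{(N-2s)/(2s)}=\|u\|_{2_s^*}^{\,2_s^*\cdot(N-2s)/(2s)}=\|u\|_{2_s^*}^{N/s}=\bigl(\|u\|_{2_s^*}^{2}\bigr)^{N/(2s)}$, we conclude
\[
\sup_{t\ge 0}J_{\omega,s}(tu)=f(t^{*})=\frac{s}{N}\Bigl(\tfrac{A}{\|u\|_{2_s^*}^{2}}\Bigr)^{\!N/(2s)}=\frac{s}{N}\,\hat{\kappa}_{N,s}^{\,N/(2s)}(u),
\]
as desired. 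There is no essential obstacle here; the only care needed is bookkeeping of the exponents via $2_s^*-2=\tfrac{4s}{N-2s}$ and the observation that $A>0$, which uses the assumption $\tau_s<\lambda_{1,s}^{\omega}$ combined with the Poincaré-type inequality implicit in \eqref{diyite}.
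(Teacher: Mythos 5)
Your proof is correct and follows essentially the same route as the paper: both maximize the fibering map $t\mapsto J_{\omega,s}(tu)$ at its unique positive critical point, which is exactly the $t_u^s$ of Lemma \ref{fenshuzuida}, and then evaluate the energy there (the paper simplifies via the Nehari identity $J_{\omega,s}(v)=\bigl(\tfrac12-\tfrac1{2_s^*}\bigr)\|v\|_{2_s^*}^{2_s^*}$ for $v\in\mathcal{N}_{\omega,s}$, while you substitute directly, which amounts to the same computation). Your explicit remark that $A>0$ follows from $\tau_s<\lambda_{1,s}^{\omega}$ and the variational characterization \eqref{diyite} is a small but worthwhile addition that the paper leaves implicit.
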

	\begin{proof}
		By Lemma \ref{fenshuzuida} we obtain that $\sup_{t\ge 0}J_{\omega,s}(tu)=J_{\omega,s}\left(t_u^s u\right)$. By directly calculation we have
		\[\begin{aligned}
			J_{\omega,s}\left(t_u^s u\right)=&\big(\frac{1}{2}-\frac{1}{2_s^*}\big)\|t_u^s u\|_{2_s^*}^{2_s^*}
			= \frac{s}{N} (t_u^s )^{2_s^*}\|u\|_{2_s^*}^{2_s^*}=\frac{s}{N}\hat{\kappa}_{N,s}^{\frac{N}{2s}}(u),
		\end{aligned}\]
		since $t_u^s u\in \mathcal{ N}_{\omega,s}$ by Lemma \ref{fenshuzuida}.
	\end{proof}

	\subsection{Proof of main results}
	We are ready to show Theorem \ref{feshujiedl}, Theorem \ref{hx} and Proposition \ref{youjiexingd}.\medskip
	
	\noindent\textbf{Proof of Theorem \ref{feshujiedl}: } Let $\varphi:\mathcal{ H }_\omega^{s}(\Omega )\setminus \left\{0\right\} \rightarrow \mathbb{ R }$ be given by
	\[\varphi (u)=\|u\|_{\omega,s}^2-\|u\|_{p_s}^{p_s}-\tau_{s}\|u\|_2^2,\]
	then $\mathcal{ N}_{\omega,s}=\varphi^{-1}\left(0\right)$ and for $u\in \mathcal{ N}_{\omega,s}$ 
	\[	\left \langle \varphi^{\prime} (u),u \right \rangle =2\mathcal{E}_{\omega,s}(u,u)-2\tau_s \int_{\Omega} u ^2 dx-p_{s}\int_{\Omega}|u|^{p_s} dx =\left(2-p_s\right)\|u\|_{p_s}^{p_s}<0,\]
	thus $\varphi^{\prime}(u)\ne 0,J_{\omega,s}(u)=\big(\frac{1}{2}-\frac{1}{p_s}\big)\|u\|_{p_s}^{p_s}> 0,u \in  \mathcal{ N}_{\omega,s}.$ Moreover, $J_{\omega,s}$ is Fr\'echet differentiable and $\varphi\in C^{1},$ 
	so by Ekeland's variational principle \cite[Theorem 3.1]{ekeland1974variational} (case of one constraint), there are $\left\{u_n\right\}\subset \mathcal{ N }_{\omega,s},\left\{\xi_n\right\}\subset \mathbb{ R }$ such that
	\[0\le J_{\omega,s}(u_n)-\inf_{\mathcal{ N}_{\omega,s}}J_{\omega,s}\le \frac{1}{n^2},\:||J_{\omega,s}^{\prime}(u_n)-\xi_{n}\varphi^{\prime}(u_n)||_{\mathcal{B}\left(\mathcal{ H }_{\omega}^s,\mathbb{ R }\right)}\le \frac{1}{n}.\]
	Then as $n\to+\infty$ there holds 
	\[\begin{aligned}
		\frac{1}{\|u_n\|_{\omega,s}}\left(\left \langle J_{\omega,s}^{\prime} (u_n),u_n \right \rangle-\xi_{n}\left \langle \varphi^{\prime} (u_n),u_n \right \rangle\right)=\xi_{n}\left(p_s-2\right)\frac{\|u_n\|_{p_s}^{p_s}}{\|u_n\|_{\omega,s}}\rightarrow 0.
	\end{aligned}\]
	By Lemma \ref{fenshuns} and Proposition \ref{yjx1}, there exists $C_1,C_2>0$ such that $\frac{\|u_n\|_{p_s}^{p_s}}{\|u_n\|_{\omega,s}^2}\ge \frac{C_1^{p_s}}{C_2}>0$, so $\xi_n \rightarrow 0,n \rightarrow \infty.$ Thus $\|J_{\omega,s}^{\prime}(u_n)\|_{\mathcal{B}\left(\mathcal{ H }_{\omega}^s,\mathbb{ R }\right)} \rightarrow 0.$
	
	Since $\left\{u_n\right\}$ is bounded in $\mathcal{H}_\omega^{s}(\Omega )$, up to a subsequence, there is $u\in \mathcal{ H }_{\omega}^s(\Omega )$ such that $u_n \rightharpoonup u$ in $\mathcal{H}_\omega^{s}(\Omega),u_n \rightarrow u$ in $L^{q}(\Omega ),q\in \left(1,2_s^*\right)$ and  $u_n \rightarrow u_0 \: \text{a.e.} \:\text{in}\:\: \Omega.$
	
	For any $\varphi \in \mathcal{ H }_{\omega}^s(\Omega ),$ we have $\mathcal{ E }_{\omega,s}\left(u_n,\varphi\right) \rightarrow \mathcal{ E }_{\omega,s}\left(u,\varphi\right).$ Similarly, by Lemma \ref{kzsl} we obtain that there is $v\in L^{p_s-1}(\Omega )$ such that $|u_n\left(x\right)|\le v\left(x\right),x\in \Omega.$ By the dominated convergence theorem, we have
	\[\int_{\Omega}|u_n|^{p_s-2}u_n\varphi dx\rightarrow \int_{\Omega}|u|^{p_s-2}u\varphi dx \quad\text{as}\ \,  n \rightarrow +\infty.\]
	Note that
	\[\begin{aligned}
		0 \leftarrow\left \langle J_{\omega,s}^{\prime} (u_n),\varphi \right \rangle =&\mathcal{E}_{\omega,s}(u_n,\varphi)-\tau_s \int_{\Omega} u_{n}\varphi dx-\int_{\Omega}|u_n|^{p_s-2}u_n\varphi dx \\=&
		\mathcal{E}_{\omega,s}(u,\varphi)-\tau_s \int_{\Omega} u\varphi dx-\int_{\Omega}|u|^{p_s-2}u\varphi dx,
	\end{aligned}\]
	that is $u$ is a solution of (\ref{fenshujie}). Next we show that $u$ is a nontrivial solution.
	
	Suppose, by contradiction, that $u\equiv 0.$ Since $\|J_{\omega,s}^{\prime}(u_n)\|_{\mathcal{B}\left(\mathcal{ H }_{\omega}^s,\mathbb{ R }\right)} \rightarrow 0$ and $\left\{u_n\right\}$ is bounded in $\mathcal{H}_\omega^{s}(\Omega )$, we deduce that
	\[\begin{aligned}
		0	\leftarrow \left \langle J_{\omega,s}^{\prime} (u_n),u_n \right \rangle =&\mathcal{E}_{\omega,s}(u_n,u_n)-\tau_s \int_{\Omega} u_n^2 dx-\int_{\Omega}|u_n|^{p_s}dx\\=&\mathcal{E}_{\omega,s}(u_n,u_n)-\int_{\Omega}|u_n|^{p_s}dx.
	\end{aligned}\]
	If $p_s\in \left(2,2_s^*\right),$ then $\|u_n\|_{p_s}\rightarrow 0,$ which contradicts Lemma \ref{fenshuns}. 
	
	For critical case $p_s=2_s^*,$ then there exists $L\ge 0$ such that up to a subsequence we have
	\[\mathcal{E}_{\omega,s}(u_n,u_n)\rightarrow L,\:\int_{\Omega}|u_n|^{2_s^*}dx\rightarrow L\quad \text{as} \ \, n \rightarrow \infty.\]
	Since $J_{\omega,s}(u_n) \rightarrow c_s^\omega$ by Lemma \ref{fenshujiemoun} and
	\[\begin{aligned}
		J_{\omega,s}(u_n)= \frac{1}{2}\|u_n\|_{\omega,s}^{2}-\frac{1}{2_s^*}\|u_n\|_{2_s^*}^{2_s^*}-\frac{1}{2}\tau_s \|u_n\|_2^2 ,
	\end{aligned}\]
	we obtain that $c_s^\omega=\left(\frac{1}{2}-\frac{1}{2_s^*}\right)L=\frac{s}{N}L.$ By Theorem \ref{sobolev}, we see that $L\kappa_{N,s}\ge L^{\frac{2}{2_s^*}},$ thus $c_s^\omega\ge \frac{s}{N}\kappa_{N,s}^{-\frac{N}{2s}}.$ Next we show that $c_\omega^s< \frac{s}{N}\kappa_{N,s}^{-\frac{N}{2s}}.$
	
	Define
	\[S _ { s , \lambda } ( v ) = \frac { \int _ { \mathbb { R } ^ { N } \times  \mathbb { R } ^ { N } } \frac { | v ( x ) - v ( y ) | ^ { 2 } } { | x - y | ^ { n + 2 s } } d x d y - \lambda \|v\|_2^2 } { \|v\|_{2_s^*}^2 }\] 
	and
	\[S_s=\inf_{v\in \mathcal{ H }^s\left(\mathbb{R}^N\right)\setminus \left\{0\right\}}\frac { \int _ { \mathbb { R } ^ { N } \times  \mathbb { R } ^ { N } } \frac { | v ( x ) - v ( y ) | ^ { 2 } } { | x - y | ^ { n + 2 s } } d x d y } { \|v\|_{2_s^*}^2 }.\]
	By (\ref{jy1}) we see that
	\[ \int _ { \mathbb { R } ^ { N } \times \mathbb { R }  ^ { N } } \frac { | v ( x ) - v ( y ) | ^ { 2 } } { | x - y | ^ { n + 2 s } } d x d y=2 c_{N,s}^{-1}\|v\|_s^2,\]
	thus
	\[S _ { s , \lambda } ( v ) =\frac { 2  c_{N,s}^{-1}\|v\|_s^2 - \lambda \|v\|_2^2 } { \|v\|_{2_s^*}^2 },\ \ S_s=2  c_{N,s}^{-1}\kappa_{ N , s }^{-1}.\]
	By \cite[Theorem 4]{servadei2015brezis} we obtain that there exists $u\in \mathcal{ H }_{\omega}^s(\Omega )\setminus \left\{0\right\}$ such that 
	\[S_{s,\lambda}(u)<S_s,\ \ N\ge 4s\ \ \text{for}\  \lambda>0.\]
	Since $N\ge 4s,\tau_s>1$ when $p_s=2_s^*,$ for $\lambda=\frac{2\left(\tau_s-1\right)}{ c_{N,s}}>0$, there exists $u_\lambda \in \mathcal{ H }_{\omega}^s(\Omega )\setminus \left\{0\right\}$ such that $S_{s,\lambda}\left(u_\lambda\right)<S_s,$ thus we get that
	\[\frac{\|u_\lambda\|_{s}^2-\left(\tau_s-1\right)\|u_\lambda\|_2^2}{\|u_\lambda\|_{2_s^*}^2}<\kappa_{ N , s }^{-1},\]
	
	By Lemma \ref{e} there exists $t_{u_\lambda}>0$ such $J_{\omega,s}\left(t_{u_\lambda}u_\lambda\right)<0.$ Take $\sigma(s)=st_{u_\lambda}u_\lambda$, then by the definition of $c_s^\omega$ in (\ref{1ws}) we obtain that $$c_\omega^s \le \sup_{t\ge 0}J_{\omega,s}\left(tu_\lambda\right)=\frac{s}{N}\hat{\kappa}_{N,s}^{\frac{N}{2s}}\left(u_\lambda\right).$$ 
	Note that
	\[\hat{\kappa}_{N,s}\left(u_\lambda\right)\le \frac{\|u_\lambda\|_{s}^2-\left(\tau_s-1\right)\|u_\lambda\|_2^2}{\|u_\lambda\|_{2_s^*}^2}<\kappa_{ N , s }^{-1},\]
	so $c_\omega^s< \frac{s}{N}\kappa_{N,s}^{-\frac{N}{2s}},$ which leads to a contradiction.
	
	Note that passing to a subsequence, we have
	\[\begin{aligned}
		\inf_{\mathcal{ N}_{\omega,s}}J_{\omega,s}=&\lim\limits_{n\rightarrow \infty}J_{\omega,s}(u_n)=\Big(\frac{1}{2}-\frac{1}{p_s}\Big)\lim\limits_{n\rightarrow \infty}\|u_n\|_{p_s}^{p_s}\\
		\ge & \Big(\frac{1}{2}-\frac{1}{p_s}\Big)\|u\|_{p_s}^{p_s}=J_{\omega,s}(u)\ge 	\inf_{\mathcal{ N}_{\omega,s}}J_{\omega,s},
	\end{aligned}\]
	so $u$ is a Nehari least-energy solution of (\ref{fenshujie}).
	
	Finally, let $t_{|u|}^s$ be given by (\ref{aaaaa}), then $t_{|u|}^s |u|\in \mathcal{ N}_{\omega,s}$. Since  $t_{|u|}^s\le1,$
	\[J_{\omega,s}(u)\le J_{\omega,s}\big(t_{|u|}^s |u|\big)= \big(\frac{1}{2}-\frac{1}{p_s} \big) (t_{|u|}^s )^{p_s}\|u\|_{p_s}^{p_s}\le J_{\omega,s} (u ),\]
	which yields that $t_{|u|}^s=1$ and $|u|$ is a nehari least-energy solution of (\ref{fenshujie}).\hfill$\Box$

	\vspace{1\baselineskip}
	
	\noindent \textbf{Proof of Theorem \ref{hx} $(1)$:} Let $\psi:\mathcal{ H }_0^{\ln}(\Omega )\setminus \left\{0\right\} \rightarrow \mathbb{ R }$ be given by
	\[\psi (u)=\|u\|^2-\lambda \int_{\Omega}u^2 dx-k \int_{\Omega}u^2\ln|u|dx,\]
	then $\mathcal{ N}=\psi^{-1}\left(0\right)$ and for $u\in \mathcal{ N}$ 
	\[	\left \langle \psi^{\prime} (u),u \right \rangle =2\|u\|^2-\left(2\lambda+k\right) \int_{\Omega} u ^2 dx-2k\int_{\Omega}u^2\ln|u| dx =-k\|u\|_2^2<0,\]
	thus $\psi^{\prime}(u)\ne 0,J_{\ln}(u)=\frac{k}{4}\|u\|_2^2>0,u \in  \mathcal{ N}.$ Moreover, $J_{\ln}$ is Fr\'echet differentiable and $\psi\in C^{1},$ 
	so by Ekeland's variational principle \cite[Theorem 3.1]{ekeland1974variational} (case of one constraint), there are $\left\{u_n\right\}\subset \mathcal{ N },\left\{\xi_n\right\}\subset \mathbb{ R }$ such that
	\[0\le J_{\ln}(u_n)-\inf_{\mathcal{ N}}J_{\ln}\le \frac{1}{n^2},\:\|J_{\ln}^{\prime}(u_n)-\xi_{n}\psi^{\prime}(u_n)\|_{\mathcal{B}\left(\mathcal{ H }_{0}^{ln},\mathbb{ R }\right)}\le \frac{1}{n}.\]
	Then
	\[\begin{aligned}
		\frac{1}{\|u_n\|}\left(\left \langle J_{\ln}^{\prime} (u_n),u_n \right \rangle-\xi_{n}\left \langle \psi^{\prime} (u_n),u_n \right \rangle\right)=k\xi_{n}\frac{\|u_n\|_{2}^{2}}{\|u_n\|}\rightarrow 0.
	\end{aligned}\]
	By Lemma \ref{zx} and Proposition \ref{yjxx}, there exists $C_1,C_2>0$ such that $\frac{\|u_n\|_{2}^{2}}{\|u_n\|^2}\ge \frac{C_1}{C_2}>0$, so $\xi_n \rightarrow 0,n \rightarrow \infty.$ Thus $\|J_{\ln}^{\prime}(u_n)\|_{\mathcal{B}\left(\mathcal{ H }_{0}^{\ln},\mathbb{ R }\right)} \rightarrow 0.$	
	
	By Proposition \ref{ps}, there is a subsequence $\left\{u_{n_k}\right\}$ such that $u_{n_k}\rightarrow u$ in $\mathcal{ H }_0^{\ln}(\Omega ).$ Thus $J_{\ln}(u)=c>0$, so $u$ is a nontrivial Nehari least-energy solution.
	
	Finally we argue that $u$ does not change sign. By Lemma \ref{dengshibubianhao}, $|u|\in \mathcal{H}_0^{\ln}(\Omega)$ and $\mathcal{ E }_{\omega}\left(|u|,|u|\right)\le \mathcal{ E }_{\omega} \left(u,u\right).$ Furthermore, the equality holds if and only if $u$ does not change sign. Let $t_{|u|}^{0}$ be given by lemma \ref{zdz} with $w=|u|,$ then
	$t_{|u|}^{0}|u| \in \mathcal{ N},$ combining with  $u\in \mathcal{ N},$ then
	\[\mathcal{ E }_{\omega}\left(u ,u \right)=\lambda \int_{\Omega}u^2 dx+\frac{k}{2}\int_{\Omega}u^2\ln  u^2 dx\]
	and
	\[\big(t_{|u|}^{0}\big)^2\mathcal{ E }_{\omega}\left( |u|, |u|\right)=\lambda \big(t_{|u|}^{0}\big)^2 \int_{\Omega}u^2 dx+\frac{k}{2}\big(t_{|u|}^{0}\big)^2\int_{\Omega}u^2\ln \big(t_{|u|}^{0} u\big)^2 dx.\]
	Thus by $\mathcal{ E }_{\omega}\left(|u|,|u|\right)\le \mathcal{ E }_{\omega} \left(u,u\right),$ we obtain that $ \big|t_{|u|}^{0}\big|\le 1.$ So
	\[\begin{aligned}
		J_{\ln}(u)\le  J_{\ln}\left(t_{|u|}^{0} u\right)
		 = \frac{k}{4}\left(t_{|u|}^{0}\right)^2 \|u\|_2^2
		 \le  \frac{k}{4}\|u\|_2^2=J_{\ln}(u).
	\end{aligned}\]
	Thus $t_{|u|}^{0}=1,$ so $u$ does not change sign in $\Omega$ by Lemma \ref{dengshibubianhao}.\hfill$\Box$

	\vspace{1\baselineskip}
	
	Next we consider $k \in \left(-\infty,0\right),\lambda \in \mathbb{R}.$
	
	\begin{lemma}\label{k<0 bdd}
		$\lim\limits_{\begin{array}{c}\|u\|\to\infty\\u\in\mathcal{H}_0^{\ln}(\Omega)\end{array}}J_{\ln}(u)=+\infty. $
	\end{lemma}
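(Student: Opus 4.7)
The only delicate term in $J_{\ln}(u) = \frac{1}{2}\|u\|^2 - \frac{\lambda}{2}\|u\|_2^2 + \frac{k}{4}\|u\|_2^2 - \frac{k}{4}\int_\Omega u^2\ln u^2\,dx$ is the logarithmic piece $-\frac{k}{4}\int_\Omega u^2\ln u^2\,dx$: since $k<0$, the factor in front is positive, which is helpful where $|u|$ is large but harmful on the set $\{|u|<1\}$, where $u^2\ln u^2<0$. Moreover, this negative contribution cannot in general be absorbed by the quadratic pieces, because $\lambda$ is arbitrary and may exceed the first Dirichlet eigenvalue $\lambda_1$ of $(I-\Delta)^{\ln}$, so a crude Poincaré argument fails. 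The plan is to convert the log term into a controllable lower bound by exploiting $k<0$ to generate an arbitrarily large positive coefficient in front of $\|u\|_2^2$.

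The only tool needed is the elementary estimate $s\ln s\ge -1/e$ for all $s\ge 0$. Applied with $s=u^2/M^2$ for any parameter $M>0$, this yields the pointwise bound $u^2\ln u^2\ge 2u^2\ln M - M^2/e$, and integrating over $\Omega$ gives
$$\int_\Omega u^2\ln u^2\,dx\ge 2(\ln M)\|u\|_2^2 - \frac{M^2|\Omega|}{e}.$$
Multiplying by the positive constant $-k/4$ and inserting the result into $J_{\ln}$ produces
$$J_{\ln}(u)\ge \frac{1}{2}\|u\|^2 + \left(-\frac{\lambda}{2}+\frac{k}{4}-\frac{k\ln M}{2}\right)\|u\|_2^2 + \frac{kM^2|\Omega|}{4e}.$$
Because $-k>0$, the coefficient $-\frac{k\ln M}{2}$ of the middle term tends to $+\infty$ as $M\to\infty$, so I may fix $M=M(\lambda,k)$ large enough that $-\frac{\lambda}{2}+\frac{k}{4}-\frac{k\ln M}{2}\ge 0$.

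With this choice of $M$, discarding the non-negative middle term yields $J_{\ln}(u)\ge \frac{1}{2}\|u\|^2 - C$, where $C = -\frac{kM^2|\Omega|}{4e}>0$ is independent of $u$. Letting $\|u\|\to\infty$ gives the claim. There is no essential obstacle; it is perhaps worth pointing out that the seemingly natural attempt via Pitt's inequality \eqref{pitt} fails, since \eqref{pitt} provides an upper bound on $\int u^2\ln|u|\,dx$ while the sign $k<0$ demands a lower bound. The scaling trick $s=u^2/M^2$ bypasses this mismatch at the cost of a purely additive constant.
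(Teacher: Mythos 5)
Your proof is correct and rests on the same mechanism as the paper's: since $k<0$, the term $-\frac{k}{4}\int_\Omega u^2\ln u^2\,dx$ produces, up to an additive constant, an arbitrarily large positive multiple of $\|u\|_2^2$ that absorbs the $-\frac{\lambda}{2}\|u\|_2^2$ term, leaving $J_{\ln}(u)\ge \frac{1}{2}\|u\|^2-C$. The paper implements this by splitting $\Omega$ into the set where $\ln u^2>-\frac{4C}{k}$ and its complement (using boundedness of $t^2\ln t$ near $0$ on the latter), whereas you package the same estimate into the single rescaled pointwise inequality $s\ln s\ge -1/e$; this is only a cosmetic difference.
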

	
	\begin{proof}
		Note that
		\[\begin{aligned}
			J_{\ln}(u)=\frac{1}{2}\|u\|^2-\frac{1}{2}\lambda \int_{\Omega}u^2 dx+\frac{k}{4}\int_{\Omega} u^2 dx-\frac{k}{4}\int_{\Omega}u^2 \ln u^2dx.
		\end{aligned}\]
		Therefore, there exists $C=C\left(N,\Omega,\lambda\right)>0$ such that 
		\[J_{\ln}(u)\ge\frac{1}{2}\|u\|^2-C\|u\|_2^2-\frac{k}{4}\int_{\Omega}u^2 \ln u^2dx.\]
		Let $\widetilde{\Omega}=\left\{x\in \Omega:\ln u^2\left(x\right)>-\frac{4C}{k}\right\}$, then 
		\[-\frac{k}{4}\int_{\widetilde{\Omega}}u^2\ln u^2dx\ge C\int_{\widetilde{\Omega}}u^2dx,\]
		Thus,
		\[J_{\ln}(u)\ge \frac{1}{2}\|u\|^2- C\int_{\Omega  \setminus \widetilde{\Omega}}u^2dx-\frac{k}{4}\int_{\Omega  \setminus  \widetilde{\Omega}}u^2\ln u^2dx,\]
		since $\lim\limits_{t\rightarrow 0}t^2\ln t=0,$ there exists $C_1>0$ such that
		\[J_{\ln}(u)\ge \frac{1}{2}\|u\|^2-C_1,\]
		which yields the result.
	\end{proof}
	
	\vspace{1\baselineskip}
	\noindent \textbf{Proof of Theorem \ref{hx} $(2)$:} There is a minimizing sequence $\left\{u_n\right\}$ such that 
	\[\lim\limits_{n\rightarrow \infty}J_{\ln}(u_n)=\inf_{\mathcal{ H }_0^{\ln}(\Omega )}J_{\ln}:=\widetilde{c}.\]
	By Lemma \ref{k<0 bdd}, $\left\{u_n\right\}$ is bounded in $\mathcal{H}_0^{\ln}(\Omega),$ then up to a subsequence, 
	$$u_n \rightharpoonup u_0\:\: \text{in}\:\:\mathcal{H}_0^{\ln}(\Omega),\quad  u_n \rightarrow u_0\:\: \text{in}\:\: L^2(\Omega )$$
	and
	$$ u_n \rightarrow u_0 \ \, \text{a.e.} \ \, \text{in}\  \Omega\quad \ {\rm as}\ n\to+\infty.$$
	In particular, $\|u_0\|^2\le \displaystyle \liminf_{n\rightarrow \infty}\|u_n\|^2.$ By Fatou's lemma we deduce that 
	\[\int_{\Omega}u_0^2\ln u_0^2 dx \le \liminf_{n\rightarrow \infty} \int_{ \Omega }u_n^2 \ln u_n^2 dx.\]
	Thus we have $J_{\ln}\left(u_0\right)\le \liminf_{n\rightarrow \infty}J_{\ln}(u_n)=\widetilde{c},$ so $u_0$ is a global least energy solution. To see that $u_0$ is nontrivial, let $\varphi \in C_c^{\infty}(\Omega )\setminus \left\{0\right\},$ then
	\[\begin{aligned}
		 J_{\ln}\left(u_0\right) \le  \widetilde{c}\le& J_{\ln}\left(t\varphi\right)
		 \\[1mm]=&\frac{t^2}{2}\Big(\mathcal{E}_{\omega}(\varphi,\varphi)-\lambda \int_{\Omega}\left|\varphi\right|^{2}dx+\frac{k}{2}\int_{\Omega}\left|\varphi\right|^{2} dx-\frac{k}{2}\int_{\Omega}\varphi^{2}\ln t^2\varphi^2dx\Big)
		 \\[1mm]<&0
	\end{aligned}\]
	for $t>0$ is sufficiently small, so $u_0\ne 0.$
	
	By Lemma \ref{dengshibubianhao}, $\mathcal{ E }_{\omega}\left(\left|u_0\right|,\left|u_0\right|\right)\le \mathcal{ E }_{\omega}\left(u_0,u_0\right),$ since $u_0$ is a global minimizer, this yields that $\mathcal{ E }_{\omega}\left(\left|u_0\right|,\left|u_0\right|\right)= \mathcal{ E }_{\omega}\left(u_0,u_0\right),$ so $u_0$ does not change sign.
	
	Finally, we show the uniqueness (up to a sign) of global least energy solution. Otherwise, there are two nontrivial solutions $u,v$ such that $u^2=v^2.$ Set
	\[\sigma\left(t,u,v\right):=\left[\left(1-t\right)u^2+tv^2\right]^{\frac{1}{2}},t \in  [0,1 ].\]
	It is not difficult to show that the function $$t\rightarrow  J_{\ln}\left(\sigma\left(t,u,v\right)\right)$$ is strictly convex in $[0,1],$ similar to the proof  \cite[Theorem 6]{angeles2023small}.  Since a strictly convex function cannot have two global minimizers, we obtain the uniqueness of least-energy solutions. \hfill$\Box$
	
	\vspace{1\baselineskip}
	
	\noindent \textbf{Proof of Proposition \ref{youjiexingd}:} We prove it with the $\delta$-decomposition of the nonlocal operators as described in \cite[Theorem 3.1]{feulefack2022small}. 
	
	Take $\delta \in \left(0,1\right) $, let $J _ { \delta } : = 1 _ { B _ { \delta } } J$ and $K _ { \delta } : = J - J _ { \delta } $. Note that for $u , v \in \mathcal { H } _ { 0 } ^ { \ln} ( \Omega ) $, 
	\[\begin{aligned}
		\mathcal { E } _ { \omega } ( u , v ) =& \mathcal { E } _ { \omega } ^ { \delta } ( u , v ) + \frac { 1 } { 2 } \int _ { \mathbb { R } ^ { N } } \int _ { \mathbb { R } ^ { N } } ( u ( x ) - u ( y ) ) ( v ( x ) - v ( y ) ) K _ { \delta } ( x - y ) d x d y\\=& \mathcal{E}_{\omega }^{\delta }\left(u,v\right) + \kappa _{\delta }\left\langle u,v \right\rangle_{L^{2} (\mathbb{ R }^{N})}−\left\langle K_{\delta } \ast u,v \right\rangle_{L^{2} (\mathbb{ R }^{N})}, 
	\end{aligned}\] 
	where 
	\[( u , v ) \mapsto \mathcal { E } _ { \omega } ^ { \delta } ( u , v ) = \frac { 1} { 2 } \int _ { \mathbb { R } ^ { N } } \int _ { \mathbb { R } ^ { N } } ( u ( x ) - u ( y ) ) ( v ( x ) - v ( y ) ) J _ { \delta } ( x - y ) d x d y \] 
	and the constant $\kappa _ { \delta }$ is $\kappa _ { \delta } = \int _ { \mathbb { R } ^ { N } } K _ { \delta } ( z ) d z.$ 
	
	By \cite[(1.9)]{feulefack2023logarithmic} $K _ { \delta } \in L ^ { 1 } ( \mathbb { R } ^ { N })$ and there exists $C_0>0$ such that 
	\[\kappa _ { \delta } >C_0 \int _ { B _ { 1 } \backslash B _ { \delta } } \frac { 1 } { | z | ^ { N } } d z = -C_0 \left|S^{N-1}\right|\ln \delta \rightarrow + \infty \quad {\rm as} \ \, \delta \rightarrow 0 .\] 
	Next, let $c > 0$ be constant to be chosen later. Consider the function $w _ { c } = ( u - c ) ^ { + } : \Omega \rightarrow \mathbb { R } ,$ then $w _ { c } \in \mathcal { H } _ { 0 } ^ {\ln} ( \Omega )$ by Lemma \ref{dengshibubianhao}. Moreover, for $x , y \in \mathbb { R } ^ { N },$ 
	\[\begin{aligned}
		&( u ( x ) - u ( y ) ) \left( w _ { c } ( x ) - w _ { c } ( y ) \right) 
		\\[1mm]=& ( [ u ( x ) - c ] - [ u ( y ) - c ] ) \left( w _ { c } ( x ) - w _ { c } ( y ) \right) 
		\\[1mm]=& [ u ( x ) - c ] w _ { c } ( x ) + [ u ( y ) - c ] w _ { c } ( y ) - [ u ( x ) - c ] w _ { c } ( y ) - w _ { c } ( x ) [ u ( y ) - c ]
		\\[1mm]=& w _ { c } ^ { 2 } ( x ) + w _ { c } ^ { 2 } ( y ) - 2 w _ { c } ( x ) w _ { c } ( y ) + [ u ( x ) - c ] ^ { - } w _ { c } ( y ) + w _ { c } ( x ) [ u ( y ) - c ] ^ { - }
		\\[1mm]\geq& w _ { c } ^ { 2 } ( x ) + w _ { c } ^ { 2 } ( y ) - 2 w _ { c } ( x ) w _ { c } ( y )
		\\[1mm]=&  \left( w _ { c } ( x ) - w _ { c } ( y ) \right) ^ { 2 } .
	\end{aligned}\]
	This implies that
	\[\begin{aligned}
		& \mathcal { E } _ { \omega } ^ { \delta } ( w _ { c } , w _ { c })
		\\[1mm] =& \frac { 1 } { 2 } \int _ { \mathbb { R } ^ { N } } \int _ { \mathbb { R } ^ { N } } \left( w _ { c } ( x ) - w _ { c } ( y ) \right) ^ { 2 } J _ { \delta } ( x - y ) d x d y\\ \leq& \frac { 1} { 2 } \int _ { \mathbb { R } ^ { N } } \int _ { \mathbb { R } ^ { N } } ( u ( x ) - u ( y ) ) \left( w _ { c } ( x ) - w _ { c } ( y ) \right) J _ { \delta } ( x - y ) d x d y
		\\[1mm]=& \mathcal { E } _ { \omega } \left( u , w _ { c } \right) - \kappa _ { \delta } \left\langle u , w _ { c } \right\rangle _ { L ^ { 2 } ( \Omega ) } + \left\langle K _ { \delta } * u , w _ { c } \right\rangle _ { L ^ { 2 } ( \Omega ) }\\ \leq& \left( \lambda - \kappa _ { \delta } \right) \left\langle u , w _ { c } \right\rangle _ { L ^ { 2 } ( \Omega ) } + \left\| K _ { \delta } * u \right\| _ { L ^ { \infty } ( \mathbb { R } ^ { N })} \left\langle 1 , w _ { c } \right\rangle _ { L ^ { 2 } ( \Omega ) } +k\int_{\Omega}uw_{c}\ln|u|dx.
	\end{aligned}\] 
	Note that $\kappa _ { \delta }\rightarrow +\infty,$ so we fix $\delta>0$  such that $\lambda-\kappa _ { \delta } < −1.$ Moreover, note that  $u(x)w_c(x) \ge cw_c(x),uw_{c}\ln|u|\ge \left(c\ln c\right)w_{c}\left(x\right)$ for $x\in \Omega$, we conclude that
	\[\begin{aligned}
		\mathcal { E } _ { \omega } ^ { \delta } ( w _ { c } , w _ { c }) \le \int_{\Omega}\left(\left\| K _ { \delta } * u \right\| _ { L ^ { \infty } ( \mathbb { R } ^ { N })}-c+kc\ln|c|\right)w_cdx. 
	\end{aligned}\]
	By \cite[(1.9)]{feulefack2023logarithmic} there is a constant $C=C\left(N,\delta\right)>0$ such that 
	$$\left\| K _ { \delta } * u \right\| _ { L ^ { \infty } ( \mathbb { R } ^ { N })}\le C \|u\|_{L^2(\mathbb{ R }^N )}.$$
	Therefore,
	\[	\mathcal { E } _ { \omega } ^ { \delta } ( w _ { c } , w _ { c }) \le  \int_{\Omega}\left(C\|u\|_{L^2(\mathbb{ R }^N )}-c+kc\ln|c|\right)w_cdx. \]
	By taking $c>C\|u\|_{L^2(\mathbb{ R }^N )}$  we obtain $\mathcal { E } _ { \omega } ^ { \delta } ( w _ { c } , w _ { c })=0.$ So $w_c=0$ in $\Omega$, thus $u\left(x\right) \le C\|u\|_{L^2(\mathbb{ R }^N )},$ replacing the above argument by $-u$ we obtain that
	\[\|u\|_{L^{\infty}(\Omega )}\le C\|u\|_{L^2(\mathbb{ R }^N )}.\]
	
By (\ref{shuang}), it's easy to prove that for any $\varphi\in C_c^{\infty}(\Omega ),\ u \in \mathcal{H}_0^{\ln}(\Omega)$
\[		\mathcal{E}_{\omega}(u,\varphi_{\epsilon})=\int_{\Omega}\left(I-\Delta \right)^{\ln}u_{\epsilon}(x)\varphi(x) dx, \]
where $\varphi_\varepsilon$ denotes a smooth approximation of $\varphi$. Moreover,
\begin{align*}
	\mathcal{E}_{\omega}(u,\varphi_{\epsilon})=&\lambda \int_{\Omega}\varphi_{\epsilon} udx+k \int_{\Omega}\varphi_{\epsilon} u \ln |u|dx\\=&\lambda \int_{\Omega}u_{\epsilon} \varphi dx+k \int_{\Omega}u_{\epsilon} \varphi \ln \left|u_{\epsilon}\right|dx,
\end{align*}
thus, we obtain 
\begin{equation}\label{womxa}
	\left(I-\Delta \right)^{\ln}u_{\epsilon}(x)=\lambda u_{\epsilon}(x)+ku_{\epsilon}(x)\ln |u_{\epsilon}(x)|,\ \forall\, x\in \Omega.
\end{equation}

		Following \cite{ChangLaraSaldana}, we split the logarithmic Schr\"odinger
		operator into a zero-order integro-differential part and a remainder.
		Set
		\[
		L_{K}u(x)
		:=\int_{B_{1}(0)}\frac{u(x)-u(x+y)}{|y|^{N}}K(y)\,dy
		\ \ \, {\rm with}\ \, K(y):=d_{N}\omega(|y|),
		\]
		thus, equation
		\eqref{womxa} can be rewritten in $\Omega$ as
	\begin{align*}
			L_K u_{\epsilon}(x)
		&= \lambda u_{\epsilon}(x)+ku_{\epsilon}(x)\ln|u_{\epsilon}(x)|
		-\int_{\mathbb{R}^{N}\setminus B_{1}(0)}
		(u_{\epsilon}(x)-u_{\epsilon}(x+y))\,J(y)\,dy
		\\&=:f_{\epsilon}(x).
	\end{align*}
		
		By the $L^\infty$ bound above and the growth $t\mapsto t\ln|t|$,
		we infer that $u\in L^\infty(\Omega)$ implies
		$u\ln|u|\in L^\infty(\Omega)$. Therefore, there exists a constant \(C>0\), independent of \(\epsilon\) and depending only on 
	 \(\lambda\), \(k\), and the domain \(\Omega\), such that
		\[
		\|f_{\epsilon}\|_{L^{\infty}(\Omega)}
		\le C\,\|u\|_{L^{2}(\mathbb{R}^{N})}.
		\]
		
		Now we invoke the global regularity result for zero-order kernels due to
 \cite[Theorem~4.18]{ChangLaraSaldana}.
		The kernel $K$ satisfies the structural assumptions of
		\cite{ChangLaraSaldana}, and $\Omega$ is a bounded domain satisfying
		the uniform exterior sphere condition. Therefore, there exist
		$\beta=\beta(\Omega)\in(0,1)$ and a constant
		\[
		C' = C'\bigl(N,\lambda,k,\Omega\bigr) > 0
		\]
		such that
		\begin{equation}\label{eq:log-Holder}
			\sup_{\substack{x,y\in \mathbb{ R }^N\\ x\ne y}}
			\frac{|u(x)-u(y)|}{\ell(|x-y|)^{\beta}}
			\le C'\|u\|_{L^{2}(\mathbb{R}^{N})},
		\end{equation}
		where
		\[
		\ell(\rho):=\frac{1}{\bigl|\ln\bigl(\min\{\rho,\tfrac1{10}\}\bigr)\bigr|}.
		\]
The proof ends. \hfill$\Box$

	\section{Convergence of solutions}
	
	Finally, we show that the least-energy solutions of the fractional Schr\"odinger operator $(I - \Delta)^s$ converge, up to a subsequence, to a nontrivial least-energy solution of the limiting problem associated with the logarithmic Schr\"odinger operator.\medskip

	\noindent \textbf{Proof of Theorem \ref{limits}: } 	Let $\left( s _ { k } \right) _ { k \in \mathbb { N } } \subset \left( 0 , s_0 \right]$ such that $\lim _ { k \rightarrow \infty } s _ { k } = 0$, let $u _ { s _ { k } } \in \mathcal { H } _ { w } ^ { s _ { k } } ( \Omega )$ be a least-energy solution of (\ref{fenshujie}). The existence of such  sequence is given by Theorem \ref{feshujiedl}. By Proposition \ref{feiczy} $\left\{u_{s_k}\right\}$ is uniformly bounded in $\mathcal{ H }_{0}^{\ln} (\Omega ).$ So passing to a subsequence, there is $u\in \mathcal{ H }_{0}^{\ln}(\Omega )$ such that 
	\[u _ { s _ { k } } \rightharpoonup  u \:\:\text{in} \:\: \mathcal { H }_0^{\ln} ( \Omega ) , \quad u _ { s _ { k } }\rightarrow u  \:\:\text{in} \:\: L ^ { 2 } ( \Omega ) ,\quad u _ { s _ { k } }\rightarrow u \:\: \text{a.e.} \:\: \text{as}\:\:k \rightarrow \infty .\] 
	Set $f(s)=|t|^{p_s-2}t,$ then $f^{\prime}(s)=p^{\prime}(s)t|t|^{p_s-2}\ln|t|$ and
	\[f(s)=f\left(0\right)+s\int_0^1 f^{\prime}\left(s\xi\right)d\xi.\]
	Let $\varphi \in C_c^{\infty}(\Omega ),$ by the Parseval identity we have
	\[\begin{aligned}
		&	\int_{\Omega}u_{s_k}\left(I-\Delta\right)^{s_k}\varphi dx=\mathcal{ E }_{\omega,s}\left(u_{s_k},\varphi\right)
		\\[1mm]=&\int_{\Omega}\left(|u_{s_k}|^{p_{s_k}-2}u_{s_k}+\tau_{s_k}u_{s_k}\right)\varphi dx
		\\[1mm]=&
		\int_{\Omega}\Big(u_{s_k}+\tau_{s_k}u_{s_k}+s_{k}\int_{0}^{1}p^{\prime}\left(s_{k}\xi\right)|u_{s_k}|^{p\left(s_{k}\xi\right)-2}u_{s_k}\ln|u_{s_k}|d\xi\Big)\varphi dx.
	\end{aligned}\]
	Note that $\left(I-\Delta\right)^{s_k}\varphi=\varphi+s_{k}\left(I-\Delta\right)^{\ln}\varphi+o\left(s_k\right)$ in $L^{\infty} (\Omega ),$ so for $k$ large
	\[\begin{aligned}
		&\int_{\Omega}u_{s_k}\left(I-\Delta\right)^{\ln}\varphi dx+o\left(1\right)\\=&\int_{\Omega}\int_{0}^{1}p^{\prime}\left(s_{k}\xi\right)\varphi|u_{s_k}|^{p\left(s_{k}\xi\right)-2}u_{s_k}\ln|u_{s_k}| d\xi dx+\int_{\Omega}\tau_{s_k}u_{s_k}\varphi dx.
	\end{aligned}\]
	By the dominated convergence theorem and Lemma \ref{ryd1} we have
	\[\lim\limits_{k\rightarrow \infty}\int_{\Omega}\int_{0}^{1}p^{\prime}\left(s_{k}\xi\right)\varphi|u_{s_k}|^{p\left(s_{k}\xi\right)-2} u_{s_k}\ln|u_{s_k}| d\xi dx=p^{\prime}\left(0\right)\int_{\Omega}\varphi u\ln|u| dx.\]
	Therefore, by $\tau_{s_k}\rightarrow 0,$ we conclude that
	\[\mathcal{ E}_{\omega}\left(u,\varphi\right)=p^{\prime}\left(0\right)\int_{\Omega}\varphi u\ln|u| dx,\quad \forall \varphi \in C_c^{\infty} (\Omega ).\]
	Then by Lemma \ref{slx1} and $C_c^{\infty}(\Omega )$ is dense in $\mathcal{H}_0^{\ln}(\Omega ),$ we obtain that $u$ is a solution of (\ref{case1}) with $\lambda=0,k=p^{\prime}\left(0\right)$.
	
	Next we show that $u$ is nontrivial. Let
	\[\lambda _ { k } = \frac { p \left( s _ { k } \right) - 2 } { 2 _ { s } ^ { * } - 2 } \in ( 0 , 1 ),  \]   
	then \[\lim _ { k \rightarrow \infty } \lambda _ { k } = \frac { s _ { k } \int _ { 0 } ^ { 1 } p ^ { \prime } \left( s _ { k } \xi \right) d \xi } { s _ { k } \frac { 4 } { N - 2 s _ { k } } } = p ^ { \prime } ( 0 ) \frac { N } { 4 } \in ( 0 , 1 ) .\] 
	By Lemma \ref{fenshuns}, there exists $C_{1},C_{2}>0$ such that
	\[\begin{aligned}
		C_{1}&<\|u_{s_k}\|_{\omega,s_k}^2 =\int_{\Omega}|u_{s_k}|^{p_{s_k}}dx+\tau_{s_k}\int_{\Omega}u_{s_k}^2dx\\&=\int_{\Omega}|u_{s_k}|^{2\left(1-\lambda_{k}\right)}|u_{s_k}|^{\lambda_{k}2_{s_k}^*}dx+\tau_{s_k}\int_{\Omega}u_{s_k}^2dx\\ &\le \|u_{s_k}\|_2^{2\left(1-\lambda_k\right)}\|u_{s_k}\|_{2_{s_k}^{*}}^{2_{s_k}^{*}\lambda_k}+\tau_{s_k}\|u_{s_k}\|_2^2.
	\end{aligned}\]
	Since $\|u_{s_k}\|_{2_{s_k}^{*}}\le \kappa_{N,s_k}^{\frac{1}{2}}\|u_{s_k}\|_{\omega,s_k}$, by Lemma \ref{feiczy} and (\ref{knsdedaxiao}) , there exists $C_2>0$ such that $\|u_{s_k}\|_{2_{s_k}^{*}}^{2_{s_k}^{*}\lambda_k}\le C_2.$ Thus we obtain
	\[C_{1}<C_{2}\|u_{s_k}\|_2^{2\left(1-\lambda_k\right)}+o\left(s_k\right),\]
	so
	\[\|u\|_2=\lim\limits_{k\rightarrow \infty}\|u_{s_k}\|_2\ge \left(\frac{C_1}{C_2}\right)^{\frac{1}{2\left(1-\frac{N}{4}p^{\prime}\left(0\right)\right)}}>0,\]
	which yields that $u\ne 0,$ so $u\in \mathcal{N}.$
	
	Next we show that $u$ is a Nehari least-energy solution of the limiting problem.

 Note that 
	\[\lim\limits_{k\rightarrow \infty}\frac{1}{s_k}\bigr(\frac{1}{2}-\frac{1}{p_{s_k}}\bigr)=\frac{p^{\prime}\left(0\right)}{4},\quad J_{\omega,s_k}\left(u_{s_k}\right)=\bigr(\frac{1}{2}-\frac{1}{p_{s_k}}\bigr)\|u_{s_k}\|_{p_{s_k}}^{p_{s_k}}.\]
	Let $c_k:=\frac{1}{s_k}\bigr(\frac{1}{2}-\frac{1}{p_{s_k}}\bigr)\|u_{s_k}\|_{p_{s_k}}^{p_{s_k}},$ by Proposition \ref{feiczy}, so passing to a subsequence, $\lim\limits_{k\rightarrow \infty}c_k=c_0.$
	
	By Fatou's Lemma, we get that
	\[c\le\frac{p^{\prime}\left(0\right)}{4}\|u\|_2^2\le \frac{p^{\prime}\left(0\right)}{4} \liminf_{k\rightarrow \infty}\int_{\Omega}|u_{s_k}|^{p_{s_k}}dx= c_0.\]
	On the other hand, by Theorem \ref{case1} with $k=p^{\prime}\left(0\right)$ there is $v\in \mathcal{ N}$ such that $J\left(v\right)=c.$ By Lemma \ref{dxnjc}, we can take sequence $\left\{v_n\right\}\subset C_c^{\infty}(\Omega )\cap \mathcal{ N}$ such that $v_n\rightarrow v $ in $\mathcal{ H }_0^{\ln}(\Omega ).$ By Lemma \ref{fenshuzuida} and $v_n \in \mathcal{ N}$ we obtain that
	\[\lim\limits_{k\rightarrow \infty}t_k^n=1 \text{ for every}\:\:n, \:\:\text{where}\quad t_k^n
	= \Big( \frac { \mathcal { E } _ { \omega ,s_k} ( v_n , v_n ) - \tau_{s_k}  \|v_n\|_2^2} { \| v_n \| _ { p_{s_k} } ^ { p_{s_k} } } \Big)^{\frac{1}{p_{s_k}-2}}.\]
	Note that $t_k^n v_n \in \mathcal{ N}_{\omega,s}$ and
	\[\begin{aligned}
		c_0= \lim\limits_{k\rightarrow \infty}c_k=&\lim\limits_{k\rightarrow \infty}\frac{1}{s_k}J_{\omega,s_k}\left(u_{s_k}\right)\le \lim\limits_{k\rightarrow \infty}\frac{1}{s_k}J_{\omega,s_k}\left(t_k^n v_n\right)\\=&\lim\limits_{k\rightarrow \infty}\frac{1}{s_k}\big(\frac{1}{2}-\frac{1}{p_{s_k}}\big)\|t_k^n v_n\|_{p_{s_k}}^{p_{s_k}}=\frac{p^{\prime}\left(0\right)}{4}\|v_n\|_2^2,
	\end{aligned} \]
then 	$\frac{p^{\prime}\left(0\right)}{4}\|v\|_2^2=c\ge c_0$ and  $J_{\ln}(u)=\frac{p^{\prime}\left(0\right)}{4}\|u\|_2^2=c$.\hfill$\Box$

\bigskip
\bigskip

\noindent{\small {\bf Acknowledgements:}    
H. Chen is supported by  NSFC, no. 12361043. \\
R. Chen is supported by China Scholarship Council,  Liujinxuan [2025] no. 37. \\
 B. Hua is supported by NSFC, no.12371056.

	
	\bigskip
		\bigskip
	
	\textsc{HUYUAN CHEN}: Center for Mathematics and Interdisciplinary Sciences, Fudan University, 
	Shanghai 200433, China; Shanghai Institute for Mathematics and Interdisciplinary Sciences, Shanghai 200433, China
	
	\emph{Email address:} \texttt{chenhuyuan@simis.cn}
	
	\vspace{2mm}

	\textsc{RUI CHEN}: School of Mathematical Sciences, Fudan University, Shanghai 200433, China; Brandenburg University of Technology Cottbus–Senftenberg, Platz der Deut\-schen Einheit 1, 03046 Cottbus, Germany
	
	\emph{Email address:} \texttt{chenrui23@m.fudan.edu.cn}
	
	\vspace{2mm}

	\textsc{BOBO HUA}:  School of Mathematical Sciences, Fudan University, Shanghai 200433, China;
	Shanghai  Center for  Mathematical Sciences, Fudan University,
	 Shanghai 200433, China;
	
	\emph{Email address:} \texttt{bobohua@fudan.edu.cn}
	
\end{document}